\definecolor{RED}{rgb}{1,0,0}\definecolor{BLUE}{rgb}{0,0,1} 
\newcommand{\Sph}{\mathbb{S}}
\newcommand{\R}{\mathbb{R}}
\newcommand{\Z}{\mathbb{Z}}
\definecolor{darkgreen}{rgb}{0,0.7,0}
\newcommand{\jm}[1]{\textcolor{darkgreen}{}}
\newtheorem{thm}{THEOREM}[section]
\newtheorem{remark}[thm]{REMARK}
\newtheorem{lem}[thm]{LEMMA}
\newtheorem{defn}[thm]{DEFINITION}
\newtheorem{prop}[thm]{PROPOSITION}
\newtheorem{conj}[thm]{Conjecture}
\date{\today}
\begin{document}
\title[Global controllabilty/stabilization  of the wave maps equation from $\Sph^1$ to $\Sph^k$]{Global controllability and stabilization of the wave maps equation from a circle to a sphere}

\author{Jean-Michel Coron}
\address{Sorbonne Universit\'{e}, Universit\'{e} Paris-Diderot SPC, CNRS, INRIA, Laboratoire Jacques-Louis Lions, LJLL,  \'{e}quipe CAGE, F-75005 Paris, France}
\email{\texttt{jean-michel.coron@sorbonne-universite.fr}}
\thanks{}


\author{Joachim Krieger}
\address{B\^{a}timent des Math\'{e}matiques, EPFL\\Station 8, CH-1015 Lausanne, Switzerland}
\email{\texttt{joachim.krieger@epfl.ch}}
\thanks{}

\author{Shengquan Xiang}
\address{School of Mathematical Sciences, Peking University, 100871, Beijing, P. R. China}
\email{\texttt{shengquan.xiang@math.pku.edu.cn}}
\thanks{}

\begin{abstract}
 Continuing the investigations started in the recent work \cite{Krieger-Xiang-2022} on  semi-global controllability and stabilization of the $(1+1)$-dimensional  wave maps equation with spatial domain $\mathbb{S}^1$ and target $\mathbb{S}^k$,
where {\it semi-global} refers to the $2\pi$-energy bound, we prove  global exact controllability of the same system for $k>1$ and  show that the $2\pi$-energy bound is a strict threshold for uniform asymptotic stabilization via continuous time-varying feedback laws indicating that the damping stabilization in \cite{Krieger-Xiang-2022} is sharp.  Lastly, the global exact controllability for $\mathbb{S}^1$-target within minimum time is discussed. 
\end{abstract}
\subjclass[2010]{35L05,   35B40, 93C20}
\thanks{\textit{Keywords.} wave maps, semi-global controllability, stabilization, quantitative.}
\maketitle

\section{Introduction}
 Recently, control problems of the geometric wave maps equations were  studied in \cite{Krieger-Xiang-2022}; it appears that controllability of this particular model has not been considered before. Recall that by {\it controllability}, one means that for any given initial and final states, one can construct a localized control that steers the solution from the one state to the other, and that by {\it stabilization}, one means that by using a suitable localized control feedback, depending on the state at the current time but not on the initial data, one can stabilize the system. In this paper, we continue to investigate the global controllability and stabilization problems of this geometric model for the case of the spatial domain $\mathbb{S}^1$ and $\mathbb{S}^k$ target.  This research enables us to discover more general and intrinsic control properties of this geometric model including:
\begin{itemize}
    \item The damped wave maps flow converges to harmonic maps with quantitative asymptotic analysis.  One  can  naturally compare it to the convergence of the harmonic maps heat flow to harmonic maps.
    \item Quantitative global exact controllability of the controlled wave maps equations when $k\geq 2$.  One has to bypass the stationary states of the system, i. e. the harmonic maps,  using well-designed controls.
    \item $2\pi$-energy level is not only a limitation of the damping stabilization caused by  harmonic maps,  but also a generic  obstruction for general  uniform asymptotic stabilization, for all $k$. 
 \end{itemize}

The geometric wave maps equations generalize the wave equations taking values in $\mathbb{R}$ to those taking values in geometric targets, and more specifically Riemannian manifolds. Let us be given a Riemannian manifold $(\mathcal{M}, g)$ and the flat space endowed with Minkowski metric $(\mathbb{R}^{1+n}, h)$. The functions $\phi: \mathbb{R}\times\mathbb{R}^n\rightarrow \mathcal{M}$ that are critical for the Lagrangian action functional
\begin{equation*}
    L^h_{\mathcal{M}}= \int_{\mathbb{R}^{1+n}} -|\partial_t \phi|_g^2+ |\nabla_x \phi|_g^2 dx dt
\end{equation*}
satisfy the following system of geometric wave equations in local coordinates:
\begin{equation*}
    \Box \phi^i+ \Gamma^i_{jk} \partial^{\alpha} \phi^j \partial_{\alpha} \phi^k=0\;\; \textrm{ where } \; \Box:= -\partial_{tt}+ \Delta,
\end{equation*}
which is an example of a system of semilinear wave equations.
We refer to the survey paper  by  Tataru  \cite{Tataru-2004} for an excellent introduction to this topic, and to the former paper \cite{Krieger-Xiang-2022} by the last two authors for first investigations on the controllability for this equation.  \\

Given a Riemannian target manifold, we are interested in the global controllability and stabilization problems of the corresponding wave maps equation. Heuristically speaking, one can naturally expect to obtain local controllability and stabilization properties for  wave maps equations  from  the related results on linear wave equations, a well-known fact (see for example \cite{Bardos-Lebeau-Rauch}). We  mainly focus on {\bf global} control problems, or {\bf semi-global} control problems in the presence of obstructions; for example this is the case for the damping stabilization where harmonic maps prevent us from getting global stabilization.

To reduce the technical difficulty of the problem, we start by considering the simplest case that the maps are from $\mathbb{R}\times\mathbb{S}^1$ to  $\mathbb{S}^k$-target, where the analysis is somewhat easier thanks to the fact that the geometric target  $\mathbb{S}^k$, the unit sphere of $\R^{k+1}$, is simple (in particular has explicit geodesics), and leave the (much) more complicated cases that map $\mathbb{R}\times\mathbb{S}^l$ to $\mathcal{M}$ to follow up work. We note right away  that in the case $l>1$,  more complex phenomena will have to be dealt with, most notably  finite time blow up  \cite{Krieger-Schlag-Tataru, R-R-2012}.  In the specific case that we study in this paper,  the wave maps equation from  $[0, T)\times\mathbb{S}^1$ to $\mathbb{S}^k$ can be written as follows:
\begin{equation*}
\begin{cases}
     \Box \phi=  \left(|\phi_t|^2- |\phi_x|^2\right)\phi, \; \forall (t, x)\in (0, T)\times \mathbb{S}^1, \\ (\phi, \phi_t)(0)= (g_0, g_1).
     \end{cases}
\end{equation*}

Let us first recall the control framework introduced in \cite{Krieger-Xiang-2022}. For any initial state $(g_0, g_1): \Sph^1\rightarrow T\Sph^k$ and  any $\mathbb{R}^{k+1}$-valued source term $f(t, x)$ having sufficient regularity  we consider the following forced nonlinear wave equation:
\begin{equation}\label{eq:inhomowavemaps}
\begin{cases}
         \Box \phi= \left(|\phi_{t}|^2- |\phi_{ x}|^2\right) \phi+
     f^{\phi^{\perp}}, \; \forall (t, x)\in (0, T)\times \mathbb{S}^1, \\  (\phi, \phi_t)(0)= (g_0, g_1),
     \end{cases}
\end{equation}
where by $f^{\phi^{\perp}}$ we refer to the orthogonal projection of $f$ onto the tangent space $T_{\phi}\mathbb{S}^k$ of $\mathbb{S}^k$ at $\phi$: $f^{\phi^{\perp}}=f- \langle f, \phi\rangle \phi$.
When the source term $f(t, x)$ is chosen to  be supported in $[0,T]\times \omega$ for some given nonempty open set $\omega\subset \mathbb{S}^1$, we call  the preceding system {\it{internally controlled}} and the subdomain $\omega$ the control area. \jm{Before there was ``More precisely, the  control term is given \textit{implicitly} by ${\bf 1}_{\omega} f^{\phi^{\perp}}$. With a slight abuse of the terminology we  also call this $\mathbb{R}^{k+1}$-valued  source function $f(t, x)$  the control term of the system''. However the support of ${\bf 1}_{\omega} f^{\phi^{\perp}} $ is not included in $\omega$ but in $\bar \omega$.}

We are also interested in the following {\it  damped wave maps equation} with a view towards the issue of stabilization, where the damping term $a(x) \phi_t$ that is localized in $\omega$ can be regarded as a special control term (also called a  closed-loop stabilization term; moreover, one easily checks that $a(x) \phi_t(t,x)$ belongs to $T_{\phi(t,x)}\mathbb{S}^k$).
\begin{gather}\label{eq:dwm}
     \Box \phi= \left(|\phi_t|^2- |\phi_x|^2\right)\phi+ a(x) \phi_t,
     \end{gather}
throughout this paper the continuous function $a(x)$ is fixed and satisfies:
\begin{equation}
     a(x)\geq 0 \textrm{ in } \mathbb{S}^1,\; \textrm{ supp }a\subset \omega, \; a\not =0.
\end{equation}

Throughout this paper, we  denote by $E$ the energy of the system:
\begin{equation} \label{def-energy}
    E((\phi, \phi_t)(t, \cdot)):= \int_{\mathbb{S}^1} \left(|\phi_x|^2+ |\phi_t|^2\right)(t, x) dx, \; \forall t>0.
\end{equation}
By slightly abusing the notations when there is no risk of confusion, we also denote $E((\phi, \phi_t)(t, \cdot))$
by $E(\phi[t])$ or simply $E(t)$. \\

Direct energy estimates indicate that the damping effect dissipates the energy:
\begin{equation*}
    \frac{d}{dt} E(t)= -2\int_{\Sph^1} a(x)|\phi_t|^2(t, x) dx\leq 0.
\end{equation*}
Moreover, it is further proved in \cite{Krieger-Xiang-2022} that the energy decays exponentially towards  0 provided that the  initial value of the energy is strictly smaller than $2\pi$, see Proposition \ref{thm:semicontrol} $(i)$ for details.
This stabilization technique has been heavily adapted to the study of control problems of dispersive equations including linear and defocusing wave equations, KdV, Schr\"{o}dinger equations, among others. Such an exponential stability property is typically related to the so-called {\it observability inequalities},
\begin{equation*}
  E(0)\leq c \int_0^T\int_{\mathbb{S}^1}a(x)|\phi_t|^2(t, x) dxdt, \; \forall \phi[0].
\end{equation*}

More precisely, the following  results concerning semi-global controllability and stabilization are demonstrated  in \cite{Krieger-Xiang-2022}:


\begin{prop}\label{thm:semicontrol} {\rm (\cite[Theorem 1.1 and Theorem  1.3]{Krieger-Xiang-2022})}  Let $\nu>0$. Consider the wave maps equation from $\mathbb{R}\times\mathbb{S}^1$ to $\mathbb{S}^k$, $k\geq 1$.\\
(i) \textrm{(Semi-global stabilization of the damped wave maps equation)}
There exist some effectively computable $C$ and $c>0$ such that for any initial state $\phi[0]: \mathbb{S}^1\rightarrow  T\mathbb{S}^k$
satisfying
\begin{equation*}
    E(0)\leq 2\pi- \nu,
\end{equation*}
the unique solution of the damped wave maps equation \eqref{eq:dwm}
decays exponentially:
\begin{equation*}
    E(t)\leq C e^{-c t} E(0), \; \forall t\in (0, +\infty).
\end{equation*}
\jm{Before there was $\mathbb{S}^k\times T\mathbb{S}^k$ instead of $T\mathbb{S}^k$. However, note that $T\mathbb{S}^k=\{(y,z)\in \R^{k+1}\times\R^{k+1}: |y|=1 \text{ and }
y\cdot z=0\}$. The same problem appears at other places.}

 (ii) \textrm{(Semi-global exact controllability of  the wave maps equation)}
 There exist some effectively computable $T\geq 2\pi$ and $C>0$ such that, for any pair of initial and final target states $u[0]$ and $u[T]: \mathbb{S}^1\rightarrow  T\mathbb{S}^k$ satisfying
\begin{equation*}
    \|u[0]\|_{\dot{H}^1_x\times L^2_x}^2, \|u[T]\|_{\dot{H}^1_x\times L^2_x}^2\leq 2\pi-\nu,
\end{equation*}
we can explicitly construct a $\mathbb{R}^{k+1}$-valued control $f(t, x)$ compactly supported in $[0, T]\times \omega$ and satisfying
\begin{equation*}
     \|f\|_{L^{\infty}_t L^2_x([0, T]\times \mathbb{S}^1)}\leq C \left(\|u[0]\|_{\dot{H}^1_x\times L^2_x}+ \|u[T]\|_{\dot{H}^1_x\times L^2_x}\right) ,
\end{equation*}
such that the unique solution of the inhomogeneous wave maps equation \eqref{eq:inhomowavemaps} with initial state $u[0]$ satisfies
$ \phi[T]= u[T]$.
\jm{I think it is better to remove ``$(\phi, \phi_t)(t, x)\in  T\mathbb{S}^k$'' since it seems to say that this property requires specific properties of $f$ which is not the case.}
\end{prop}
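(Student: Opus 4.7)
For part (i), the plan is to combine energy dissipation with an observability inequality, in the classical Haraux / Dehman--Lebeau--Zuazua scheme. First I would record the identity $\frac{d}{dt}E(t)=-2\int_{\mathbb{S}^1}a(x)|\phi_t|^2\,dx$, which gives $E(T_0)=E(0)-2\int_0^{T_0}\!\int_{\mathbb{S}^1} a|\phi_t|^2\,dx\,dt$ for any fixed time $T_0$. The decay then reduces to proving an observability inequality of the form
\begin{equation*}
E(0)\leq C_0\int_0^{T_0}\!\int_{\mathbb{S}^1} a(x)|\phi_t|^2\,dx\,dt
\end{equation*}
valid for every solution with $E(0)\leq 2\pi-\nu$; the desired exponential decay follows by iteration on intervals of length $T_0$.

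The core difficulty, and what I expect to be the main obstacle, is establishing this nonlinear observability inequality uniformly up to the threshold $2\pi-\nu$. I would proceed by contradiction and compactness: pick a sequence of solutions $\phi^{(n)}$ with $E_n(0)\leq 2\pi-\nu$ whose observation integral tends to zero faster than $E_n(0)$. A sub-threshold energy bound yields a well-posed weak limit $\phi^{(\infty)}$ (one has to handle the quadratic nonlinearity carefully, e.g.\ by extracting spatial/temporal concentration profiles or exploiting the $1+1$-dimensional dispersion to upgrade weak to strong convergence) satisfying $\phi^{(\infty)}_t\equiv 0$ on $(0,T_0)\times\omega$. A unique continuation argument for the linearized wave maps equation then forces $\phi^{(\infty)}_t\equiv 0$ on the whole cylinder, so that $\phi^{(\infty)}$ is a harmonic map $\mathbb{S}^1\to\mathbb{S}^k$. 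Since such maps are closed geodesics, the only admissible energies are $0$ or $2\pi m^2$ with $m\in\mathbb{N}^*$; the constraint $E\leq 2\pi-\nu$ forces the constant map, contradicting the normalization. The argument is standard modulo the nonlinear compactness step, which is where all the work goes.

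For part (ii), I would use the classical three-step ``stabilize--bridge--stabilize'' scheme. Step A: because wave maps equations are time-reversible ($t\mapsto -t$ is a symmetry) and $E(u[T])\leq 2\pi-\nu$, applying part (i) backward in time drives $u[T]$ to a state $v_1$ of arbitrarily small energy at some intermediate time $T_1$; in the same way, applying (i) forward in time to $u[0]$ gives a state $v_0$ of small energy at some time $T_0$. These two damped evolutions are realized by controls of the form $f=a(x)\phi_t$, which obey the $L^\infty_t L^2_x$-bound $\|a\phi_t\|\lesssim\|u[0]\|_{\dot H^1\times L^2}+\|u[T]\|_{\dot H^1\times L^2}$ via the energy estimate. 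Step B: the bridge between $v_0$ and $v_1$ on $[T_0,T_1]$ is obtained from a local exact controllability result near the trivial (constant) solution. Linearizing around a constant gives the inhomogeneous free wave equation on $\mathbb{S}^1$ with a localized control, which is exactly controllable in arbitrarily short time (the geometric control condition is trivially met on $\mathbb{S}^1$), and a standard fixed-point/HUM argument transfers exact controllability to the nonlinear system provided $v_0,v_1$ have sufficiently small energy.

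The main obstacle in (ii) is coordinating the three steps so that the global control satisfies the stated linear-in-data bound. This forces Step A to be quantitative (the constants $C,c$ of part (i) are effective, so the time $T_0$ needed to reach energy $\varepsilon$ grows only logarithmically in $E(0)/\varepsilon$) and Step B to be quantitative in a matching way (the local exact control near zero produces $\|f\|_{L^\infty_t L^2_x}\lesssim \|v_0\|+\|v_1\|$). Finally one has to truncate smoothly in time to make $f$ compactly supported in $[0,T]$, and verify that the projection onto $T_\phi\mathbb{S}^k$ built into the forcing term does not enlarge the estimate, both of which are essentially book-keeping.
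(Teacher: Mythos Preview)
This proposition is not proved in the present paper; it is quoted from \cite{Krieger-Xiang-2022}. Nevertheless, the structure of that argument is visible here through Lemma~\ref{lem:2} and the proof of Proposition~\ref{prop:23}, and it differs from your proposal in a way that matters for the statement as written.

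For part (i), your compactness--uniqueness scheme would establish \emph{some} observability inequality, but it cannot deliver the ``effectively computable'' constants $C,c$ that the proposition asserts: contradiction-and-compactness arguments are intrinsically non-quantitative (you yourself flag the nonlinear compactness step as ``where all the work goes''). The route actually taken in \cite{Krieger-Xiang-2022} is direct and constructive. Assuming the observation $\int_0^{T_0}\!\int a|\phi_t|^2$ is small relative to $E(0)$, one first shows by explicit propagation estimates (this is Lemma~\ref{lem:2}) that $\|\phi_t\|_{L^\infty_x L^2_t}$ is quantitatively small; one then time-averages $\phi$ against a bump function and uses Fourier analysis on $\mathbb{S}^1$ to show that the averaged map nearly solves the harmonic map ODE $\tilde\phi_{xx}+\frac{E(0)}{2\pi}\tilde\phi=O(\delta^{1/2p})$, which forces $E(0)$ to lie within an explicit distance of the discrete set $\{2\pi n^2:n\in\mathbb{N}\}$. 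Under the hypothesis $E(0)\leq 2\pi-\nu$ this yields an explicit lower bound on the observation and hence effective decay constants. Your outline captures the correct qualitative obstruction (harmonic maps at energies $2\pi n^2$), but a different, quantitative mechanism replaces the soft limit argument.

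For part (ii), your stabilize--bridge--stabilize outline is correct and is the scheme used. The quantitative $L^\infty_t L^2_x$ bound on the control in the damping phases comes precisely from the effective constants in (i), so the two parts are coupled exactly as you describe; the local controllability near constants is indeed obtained by linearization and a fixed-point argument.
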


Regarding these semi-global control results, where the word ``semi-global" refers to the fact that states are restricted below the first critical energy level $2\pi$, several  questions arise naturally:

{\bf (Q1)} As remarked in \cite{Krieger-Xiang-2022}, the $2\pi$-energy level in Proposition \ref{thm:semicontrol} $(i)$ is optimal in the sense that harmonic maps appear as non-trivial stationary states of the damped wave maps equation. Because harmonic maps (for example an equator) are not minimizers of the energy, they are not stable stationary states.  One may pose the question whether this energy level is a threshold for uniform asymptotic stabilization with arbitrary stationary  feedback law $f(t, x)= F(\phi(t, x), \phi_t(t, x))$ or even time-varying feedback law $f(t, x)= F(t; \phi(t, x), \phi_t(t, x))$.  Specifically, is it possible to construct a continuous time-varying feedback law,
\begin{equation}
\label{feedbacklaw-def}
\begin{array}{ccc}
    F: \mathbb{R}\times H^1_x\times L^2_x(\mathbb{S}^1;T\mathbb{S}^k)&\rightarrow& L^2_x(\mathbb{S}^1) \\
    (t; (\phi, \phi_t))&\mapsto &  F(t; (\phi, \phi_t))
\end{array}
\end{equation}
satisfying
\begin{equation*}
\textrm{supp }F(t; (\phi, \phi_t))\subset \omega, \; \forall t \in \R,\; \forall  (\phi, \phi_t)\in H^1_x\times L^2_x(\mathbb{S}^1;T\mathbb{S}^k)
\end{equation*}
such that the closed-loop system
\begin{equation}\label{closed-loop-timevarying}
     \Box \phi(t, x)= \left(|\phi_{t}|^2- |\phi_{ x}|^2\right)(t, x) \phi(t, x)+  (F(t; (\phi, \phi_t)(t, \cdot))(x))^{\phi(t, x)^{\perp}}
\end{equation}
is uniformly asymptotically stable with a large basin of attraction? To quantify the basin of attraction, we introduce the following definition.
\begin{defn}\label{DEF-usstab}
Let $e>0$.  Let us denote by $\textbf{H}(e)$ the set of  states
    \begin{equation}
        \textbf{H}(e):= \{u[0]: u[0](x)\in T\mathbb{S}^k,\; \forall x\in \mathbb{S}^1, \; E(u[0])\leq e \}.
    \end{equation}
The closed-loop system \eqref{closed-loop-timevarying} is called uniformly asymptotically stable in $\textbf{H}(e)$ if there exists a $\mathcal{KL}$ function $h$, i.e. (see, for example, \cite[Def. 24.2, page 97]{1967-Hahn-book}) a continuous function $h: \mathbb{R}^+\times \mathbb{R}^+ \rightarrow \mathbb{R}^+$ satisfying
\begin{gather*}
  \forall t \in \mathbb{R}^+,\;   h(\cdot,t) \text{ is strictly increasing and vanishes at $0$,}
\\
  \forall s \in \mathbb{R}^+,\;   h(s,\cdot) \text{ is decreasing and } \lim_{t\rightarrow +\infty}h(s,t)=0,
\end{gather*}
 such that the energy of the system decays uniformly as follows:
\begin{equation}\label{estEphit}
    E(\phi[t])\leq h(E(\phi[0]),t),  \; \forall t\in (0, +\infty),\;   \forall \phi[0] \in \textbf{H}(e).
\end{equation}
\jm{May be it is better to use the classical definition of asymptotic stability, which allows to consider functions $h(E(\phi[0]),t)$ which are more general than $h(t)E(\phi[0])$.}
\end{defn}

\begin{remark}
In fact, instead of \eqref{estEphit}, the usual definition of uniform asymptotic stability (see, for example, \cite[Def. 36.9, page 174]{1967-Hahn-book}), requires that, for every $\tau\in \R$,
\begin{equation}\label{estEphit-tau}
    E(\phi[t])\leq h(E(\phi[\tau]),t-\tau),  \; \forall t\in (\tau, +\infty), \; \forall\phi[\tau] \in \textbf{H}(e).
\end{equation}
However \eqref{estEphit} is sufficient for the obstruction given in Theorem \ref{THM-nounifdecay}.
\end{remark}

{\bf (Q2)}  Is it possible to remove the semi-global
restriction in Proposition \ref{thm:semicontrol} $(ii)$ to obtain the stronger global controllability result? Of course, global controllability is impossible when the target is $\mathbb{S}^1$, since the curve $\phi(t)(\cdot): \Sph^1\rightarrow \Sph^1$ has a degree which does not depend on $t$.  However, when the target is a higher dimensional sphere,  harmonic maps are homotopic to  trivial states (any point state). This inspires us to expect such a global controllability result for $k>1$. It is  conjectured in \cite{Krieger-Xiang-2022} that for a general Riemannian manifold target the controllable space (accessible space) is equivalent to its first homotopy class.
\begin{defn}\label{def-con-homo}
    Let $(\mathcal{M}, g)$ be a Riemannian manifold. The controlled wave maps equation $\phi: \mathbb{R}\times \mathbb{S}^1\rightarrow \mathcal{M}$ is said to be {\bf globally exact controllable in homotopy class} if  for any pair of initial and final target  states $(u[0], u[T])$ in  $H^1_x\times L^2_x(\mathbb{S}^1;\mathbb{S}^k)$ both with finite energy and such that $u(0, x)$ and $u(T, x)$ are homotopic, there exists a  control $f_0\in L^2_{t, x}([0, T]\times \mathbb{S}^1)$ with some $T>0$ depending on the given pair of states and having a support included in $[0,T]\times \omega$ such that, the unique solution of the controlled wave maps equation with  initial state $\phi[0]= u[0]$ and  control $f_0$ satisfies $\phi[T]= u[T]$.
\end{defn}
\begin{conj}\label{CONJ}
Let $(\mathcal{M}, g)$ be a Riemannian manifold. The controlled wave maps equation $\phi: \mathbb{R}\times \mathbb{S}^1\rightarrow \mathcal{M}$ is globally exact controllable in   homotopy class.
\end{conj}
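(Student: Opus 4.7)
The plan is to reduce the general case to the semi-global regime of Proposition \ref{thm:semicontrol} (or its natural analogue for a general target) by showing that, within any fixed free homotopy class, one can drive an arbitrary finite-energy state to a low-energy representative using a compactly-supported control in $\omega$. Combined with the time-reversibility of the wave maps equation, this yields the desired global controllability: one drives $u[0]$ down to a small-energy representative of its homotopy class, then uses the time-reversal of a similar trajectory starting from $u[T]$ to close the loop, gluing the two pieces by Proposition \ref{thm:semicontrol}$(ii)$ once both states sit safely below the $2\pi$ threshold.

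The core technical step is thus an energy-reduction procedure preserving homotopy class. A natural candidate is a two-stage scheme. First, employ the damping-type control $f=-a(x)\phi_t$ localized in $\omega$, as in Proposition \ref{thm:semicontrol}$(i)$, to monotonically dissipate energy. A LaSalle-type invariance argument should identify the $\omega$-limit set as a subset of the harmonic maps from $\mathbb{S}^1$ to $\mathcal{M}$, that is, closed geodesics. Second, if the limit state is not a global minimizer in its free homotopy class, one switches to an active $\omega$-supported control designed to excite an unstable direction of the harmonic map, thereby producing a strict decrease of the energy by a definite amount. For $\mathcal{M}=\mathbb{S}^k$ with $k>1$, every harmonic map from $\mathbb{S}^1$ is a multiply covered great circle; these are all unstable critical points of the Dirichlet energy (Morse index at least one, using the normal directions to the equator), which is precisely what makes the escape mechanism available in the case directly treated by the companion global controllability result for $\mathbb{S}^k$.

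To realize the escape step rigorously one needs a quantitative analogue: given a neighborhood of a harmonic map in $\dot H^1\times L^2$, construct an explicit $\omega$-supported control which, applied over a bounded interval, decreases the energy by a prescribed amount depending on the Morse index data. The main difficulty here is that $\omega$ is a strict subset of $\mathbb{S}^1$, so unstable modes cannot be excited by arbitrary pointwise perturbation; one must use finite speed of propagation together with unique continuation for the linearized operator around the harmonic map to verify that $\omega$-localized forcing genuinely couples to the unstable eigenmode. Iterating the damping phase and escape phase, the energy should cascade down through the finitely many critical values lying above the class-minimal one, terminating below $2\pi$, where Proposition \ref{thm:semicontrol}$(ii)$ takes over.

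The main obstacle, and the reason the statement is formulated as a conjecture rather than a theorem, is handling the Morse-theoretic structure for a general Riemannian target. For arbitrary $(\mathcal{M},g)$, the set of closed geodesics in a given free homotopy class may contain strict local minimizers that are not global minimizers, and the spectrum of indices can be complicated; ensuring that the iterated escape terminates in finite time, and that the intermediate trajectories remain in $H^1_x\times L^2_x$ with controlled norm, is a delicate min-max problem on the loop space $\Omega\mathcal{M}$. A clean proof for special classes of targets, for instance manifolds of non-positive sectional curvature where each free homotopy class contains a unique closed geodesic that is an energy minimizer, should already be within reach of the scheme above, and may serve as a natural intermediate result on the way to the full conjecture.
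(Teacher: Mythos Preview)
The statement you are addressing is explicitly a \emph{conjecture} in the paper; the paper does not prove it for a general target $(\mathcal{M},g)$. What the paper does prove is the special case $\mathcal{M}=\mathbb{S}^k$ (Theorem~\ref{THM-globalexact} for $k\geq 2$ and Theorem~\ref{THM-optimaltime-S1} for $k=1$), and your outline faithfully reproduces the three-step scheme used there (Section~\ref{sec-glo-cont}): damped evolution toward harmonic maps (Theorem~\ref{thm-converharm}), an explicit $\omega$-supported control near each harmonic map that pushes the energy strictly below the critical level by exploiting the instability of great circles (Theorem~\ref{thm-decreseenergy}, via a power-series expansion), and finally the semi-global result Proposition~\ref{thm:semicontrol}$(ii)$ once the energy is below $2\pi$. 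As a roadmap for the sphere case your proposal is correct and matches the paper.

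For general $\mathcal{M}$ the proposal remains a strategy rather than a proof, and a few points are imprecise. First, Proposition~\ref{thm:semicontrol} is stated and proved only for $\mathbb{S}^k$; invoking it (or ``its natural analogue'') for a general target presupposes a semi-global result that is not available. Second, the $2\pi$ threshold is sphere-specific (it is the energy of the equator); in a nontrivial free homotopy class on a general $\mathcal{M}$ there is no representative with energy below the minimal closed-geodesic energy in that class, so ``drive to a small-energy representative below $2\pi$'' must be replaced by ``drive to a near-minimizer in the class'' together with a local controllability statement around that minimizer. Third, your closing remark about non-positively curved targets actually cuts against your escape step rather than supporting it: if the unique closed geodesic in the class is a strict minimizer, there is no unstable direction to excite, so the second stage of your scheme is unavailable; one would instead have to argue that the damped flow converges to that minimizer and then invoke local controllability near it. You correctly identify the Morse-theoretic structure of the loop space as the essential obstruction, which is exactly why the paper leaves the general case as a conjecture.
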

This conjecture is the strongest possible one in the sense that as time $t$ changes, $\phi(t)$ always lives in the same homotopy class in $[\mathbb{S}^1,\mathcal{M}]$.\\

{\bf (Q3)}  Finally, observe from Definition \ref{def-con-homo} that the time period $T$ may depend on the initial and final states;  one may further wonder about the possibility of controlling the system  in homotopy class within some  fixed time  period $T$,  or even within short time intervals,  instead of the large-time controllability result presented in the preceding proposition.  Of course,  due to the finite speed of propagation, the minimum time for exact controllability should be at least $2\pi-|\omega|$ (see \cite[Chapter 2.1 and Chapter 2.4]{coron} for a heuristic explanation on this type of minimum time based on transport equations and wave equations).
\\

This paper is devoted to the study of these questions leading to the following answers:

First, we show that $2\pi$-energy is a strict threshold for (time-varying) uniform asymptotic stabilization by means of continuous feedback laws indicating that Proposition \ref{thm:semicontrol} $(i)$ is sharp, also for $k>1$. Its proof is based on a topological observation concerning the evolution of the flow that connects the uniform asymptotic stability and degree theory.  We observe that various topological conditions necessary for uniform asymptotic stabilization of {\it{finite dimensional}} systems are discussed in the literature, for example the criterion based on homotopy groups  found by the first author \cite{Coron-1990}. To the best of our knowledge, this is the first time that such an important topological property appears in the context of stabilization of PDE based models.

\begin{thm}\label{THM-nounifdecay}
   For any  time-varying feedback law F satisfying  conditions $(\mathcal{P}1)-(\mathcal{P}4)$ (the precise conditions will be given in Section \ref{sec:pre})
    \begin{equation*}
      F: \mathbb{R}\times H^1_x\times L^2_x(\mathbb{S}^1;T\mathbb{S}^k)\rightarrow  L^2_x(\mathbb{S}^1),
    \end{equation*}
    the closed-loop system \eqref{closed-loop-timevarying} is not uniformly asymptotically stable in
    $\textbf{H}(2\pi)$.
\end{thm}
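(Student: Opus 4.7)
The plan is to argue by contradiction: uniform asymptotic stability on $\textbf{H}(2\pi)$ will force $\textbf{H}(2\pi)$ to be homotopy equivalent to the set of rest states, which is a copy of $\mathbb{S}^k$, and this will conflict with the actual topology of $\textbf{H}(2\pi)$.

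First, using conditions $(\mathcal{P}1)$--$(\mathcal{P}4)$ (continuous dependence of the closed-loop system on initial data in $H^1_x\times L^2_x$), the flow $\Phi_t$ is continuous on $\textbf{H}(2\pi)$. Every constant map $(c,0)$ with $c\in\mathbb{S}^k$ is a fixed point: $E((c,0))=0$ and the $\mathcal{KL}$-estimate $E(\Phi_t(c,0))\leq h(0,t)=0$ forces $\Phi_t(c,0)=(c,0)$. Now pick $T_\star$ with $h(2\pi,T_\star)<\varepsilon_0$, where $\varepsilon_0$ is small enough that any state of energy at most $\varepsilon_0$ has its $\phi$-component contained, via $H^1(\mathbb{S}^1)\hookrightarrow L^\infty$ and Poincar\'e, inside a fixed tubular neighborhood of $\mathbb{S}^k\subset\mathbb{R}^{k+1}$ on which nearest-point projection $\pi_{\mathbb{S}^k}$ is smooth. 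Define $r:\textbf{H}(2\pi)\to\mathbb{S}^k$ by $r(\phi[0]):=\pi_{\mathbb{S}^k}\bigl(\frac{1}{2\pi}\int_{\mathbb{S}^1}\phi(T_\star,x)\,dx\bigr)$. Then $r$ is continuous, $r\circ\iota=\mathrm{id}_{\mathbb{S}^k}$ for $\iota:\mathbb{S}^k\hookrightarrow\textbf{H}(2\pi)$ the constant-embedding, and concatenating $s\mapsto\Phi_s(\phi[0])$ on $[0,T_\star]$ with the pointwise geodesic interpolation from $\Phi_{T_\star}(\phi[0])$ to $\iota(r(\phi[0]))$ inside the tubular neighborhood produces a homotopy $\mathrm{id}_{\textbf{H}(2\pi)}\simeq\iota\circ r$ that fixes constants throughout. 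Hence $\mathbb{S}^k$ is a strong deformation retract of $\textbf{H}(2\pi)$.

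It remains to contradict the homotopy equivalence $\textbf{H}(2\pi)\simeq\mathbb{S}^k$. For $k=1$, any $\phi:\mathbb{S}^1\to\mathbb{S}^1$ of degree $\pm 1$ satisfies $\int_{\mathbb{S}^1}|\phi_x|^2\geq 2\pi$ with equality only for rigid rotations; combined with $E\leq 2\pi$ this forces degree-$(\pm 1)$ states to have $\phi_t\equiv 0$ and $\phi$ a rigid rotation. Therefore $\pi_0(\textbf{H}(2\pi))$ has at least the three classes of degrees $-1,0,1$, while $\pi_0(\mathbb{S}^1)$ is a point: contradiction. For $k\geq 2$, connectivity no longer separates, so one must work with a finer invariant. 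I would build a test map $\sigma:X\to\textbf{H}(2\pi)$ from a suitable sphere or Stiefel manifold $X$ assembled out of parametrized equators $\{(AQ,0):A\in\mathrm{SO}(k+1)\}\subset\{E=2\pi\}$, and show that the composition $r\circ\sigma:X\to\mathbb{S}^k$ carries a degree or homotopy class incompatible with a retraction that is a homotopy equivalence.

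The hard part, of course, is this last step when $k\geq 2$: the simple $\pi_0$-obstruction used for $k=1$ breaks down, and one must detect, through the dissipative retract $r$, the topological richness that the family of energy-critical equators injects into $\textbf{H}(2\pi)$ precisely at the threshold $E=2\pi$. Identifying which $n$-parameter family of equator configurations to use and performing the underlying degree computation is the technical core that bridges uniform asymptotic stability with degree theory in the spirit of \cite{Coron-1990}.
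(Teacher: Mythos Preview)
Your overall strategy---a contradiction via a topological obstruction coming from degree theory---matches the paper's, and your $k=1$ case is essentially equivalent (you use $\pi_0$, the paper tracks the degree of a single map $\phi(x)=x$ under the flow). But for $k\geq 2$ you have not actually proved anything: you openly defer the ``hard part'' of producing a test family $\sigma:X\to\textbf{H}(2\pi)$ and computing the relevant degree. This is precisely the content the paper supplies. The paper takes $X=(\Sph^1)^{k-1}$ (a torus, not a sphere or Stiefel manifold) and defines $\gamma:(\Sph^1)^{k-1}\to \textbf{H}(2\pi)$ so that the total map $A:(\Sph^1)^{k-1}\times\Sph^1_x\to\Sph^k$, $(s,x)\mapsto\gamma(s)(x)$, has $\deg A=2^{k-1}\neq 0$, while $E((\gamma(s),0))\leq 2\pi$ for all $s$. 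The flow then gives a continuous homotopy $\mathcal{H}_1(t;s,x)=\Phi_1(t,(\gamma(s),0))(x)$ from $A$ to a map that, after time $T_\star$, can be straight-line deformed to a map depending on $s$ alone and hence of degree $0$. So the missing idea is the explicit iterated-suspension construction of $A$ and its degree computation.

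There is also a technical gap in your retraction setup: the homotopy you build from $\mathrm{id}_{\textbf{H}(2\pi)}$ to $\iota\circ r$ runs through $\Phi_s(\phi[0])$ for $s\in[0,T_\star]$, but the $\mathcal{KL}$-bound $E(\Phi_s(\phi[0]))\leq h(2\pi,s)$ does not force the trajectory to stay in $\textbf{H}(2\pi)$ for intermediate $s$ (nothing prevents $h(2\pi,0)>2\pi$). So ``$\Sph^k$ is a strong deformation retract of $\textbf{H}(2\pi)$'' is not established. The paper sidesteps this entirely: it never claims a homotopy equivalence of $\textbf{H}(2\pi)$; it only uses the flow as a homotopy of maps $(\Sph^1)^{k-1}\times\Sph^1_x\to\Sph^k$ (via evaluation of $\Phi_1$), where no constraint on the energy along the way is needed and degree is preserved.
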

\begin{remark}
In the above theorem there is no condition on the support of $F(t,(\phi, \phi_t))$: this obstruction to uniform asymptotic stability holds even for $\omega=\Sph^1$.
\end{remark}

\begin{remark}
It is natural to ask whether for any time-varying feedback law $F$ satisfying conditions $(\mathcal{P}1)-(\mathcal{P}3)$ (namely, we remove the Lipschitz condition concerning the uniqueness of solution to the closed-loop  system)  the closed-loop system \eqref{closed-loop-timevarying} is not uniformly asymptotically stable in
    $\textbf{H}(2\pi)$.
\end{remark}

The second result answers Conjecture \ref{CONJ} for the spherical target case.  We hope that this framework can further inspire a complete proof of the conjecture  for general Riemannian manifold targets.

\begin{thm}[Global exact controllability for $\mathbb{S}^k$-target]\label{THM-globalexact}
Let $k\geq 2$ and $M>0$. There exist some effectively computable $T>0$ and $C>0$, such that for any initial state
$u[0]$ and final target state $u[T]$ both in
 $\textbf{H}(M)$,
there exists some control term $f_0\in C([0, T]; L^2(\mathbb{S}^1))$ compactly supported in $[0, T]\times \omega$ such that
the unique solution of the controlled wave maps equation \eqref{eq:inhomowavemaps} with initial state $u[0]$ and control $f_0$ satisfies $\phi[T]= u[T]$.
\end{thm}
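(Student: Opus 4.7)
The plan is to reduce the global controllability problem to the semi-global result of Proposition~\ref{thm:semicontrol}\,$(ii)$. Fix a small $\nu>0$. I would steer $u[0]$ on a first subinterval $[0,T_1]$ to some state $v$ with $E(v)\leq 2\pi-\nu$ and, applying the same construction in reversed time, produce a control on a last subinterval $[T-T_1,T]$ driving another low-energy state $w$ to $u[T]$ (this uses the time-reversibility of \eqref{eq:inhomowavemaps} after flipping the sign of the velocity component). On the middle subinterval, Proposition~\ref{thm:semicontrol}\,$(ii)$ links $v$ to $w$ directly. The only genuinely new ingredient is thus an \emph{energy-reduction} step that brings any state in $\textbf{H}(M)$ below $2\pi-\nu$; the hypothesis $k\geq 2$ enters here through the simple connectedness of the target, which removes the topological obstruction that is present in the $\mathbb{S}^1$-case.

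\textbf{Energy reduction.} The mechanism I would use is the following. Running the damped flow \eqref{eq:dwm} from $u[0]$ forces convergence, along subsequences, of the trajectory in $H^1\times L^2$ to a harmonic map from $\mathbb{S}^1$ to $\mathbb{S}^k$, which must be either a constant (energy $0$) or an iterated equator (energy $2\pi n^2$ for some $n\in\mathbb{N}$). If the limit has energy $0$, the energy enters $\{E<2\pi-\nu\}$ in finite time and we are done. Otherwise the trajectory approaches some equator $H$ of energy $2\pi n_\star^2\leq M$; because $k\geq 2$, $H$ is a saddle of the Dirichlet energy with unstable normal directions inside $\mathbb{S}^k$. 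A small $L^2$-control supported on a short slab $[T',T'+\tau]\times\omega$ and pointing into an unstable mode transverse to $H$ perturbs the solution off $H$; finite-speed-of-propagation on $\mathbb{S}^1$ (characteristics traverse the whole circle in time $2\pi$) spreads the perturbation to every point of the state, and the ensuing damped evolution then escapes the basin of $H$ and crosses the level $2\pi n_\star^2$ by a definite quantum. Iterating through the finitely many critical levels $2\pi n^2\leq M$ (at most $\lfloor\sqrt{M/(2\pi)}\rfloor+1$ perturb-then-damp cycles) produces a state of energy below $2\pi-\nu$, whereupon Proposition~\ref{thm:semicontrol}\,$(ii)$ closes the argument, and the total time $T=T(M,\nu,\omega)$ is effectively computable by tracking the damping rate and the per-cycle energy decrement.

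\textbf{Main difficulty.} The delicate point is the quantitative escape from each harmonic-map level. I expect it to require a spectral analysis of the linearization of the wave-maps operator at an equator $H$ in order to isolate the unstable eigenmodes normal to $H$, together with an observability-type estimate showing that a control supported in $\omega$ can excite those modes with a definite amplitude. The perturbation must be small enough to preserve the higher-regularity bounds needed for well-posedness of \eqref{eq:inhomowavemaps}, yet large enough that the subsequent damped evolution does not relax back to a harmonic map at the same energy level. Securing these bounds uniformly over the $(k-1)$-parameter family of equators and their iterates, so as to keep $T$ effectively computable in $M$ alone, is the step I anticipate will require the bulk of the new technical work.
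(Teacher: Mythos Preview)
Your three-step architecture---damp until the state is $\varepsilon$-close to a harmonic map, then push the energy strictly below the corresponding critical level $2\pi n^2$, iterate down to energy below $2\pi$, and finish with Proposition~\ref{thm:semicontrol}\,(ii)---coincides with the paper's scheme (Theorems~\ref{thm-converharm} and~\ref{thm-decreseenergy}), as does the time-reversal reduction to controllability-to-constants.

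The gap is in your escape mechanism. You propose a short-slab control on $[T',T'+\tau]\times\omega$ pointing into a transverse unstable mode, followed by damped evolution to carry the state over the saddle. But the paper shows this cannot work as stated: at a harmonic map the second time-derivative of $E$ along \emph{any} controlled trajectory satisfies $E''(0)\geq 0$ (the ``straightforward variational point of view'' computation following \eqref{ener_directes}), and Proposition~\ref{prop:smalltimecontrol}\,(i) sharpens this at the linearized level---when the control region $(-a,a)$ has $a<\pi/2$, every nontrivial control acting for time $T<\pi/2-a$ yields a strictly \emph{positive} leading-order energy change. So after your short kick the energy is still at or above $2\pi n_\star^2$, and nothing in your sketch rules out the subsequent damped flow relaxing to a rotated harmonic map at the \emph{same} level; the assertion that it ``crosses the level $2\pi n_\star^2$ by a definite quantum'' is exactly the missing step, and it does not follow from the saddle picture alone.

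The paper's resolution bypasses damping in the escape step entirely and instead runs a power-series expansion (Proposition~\ref{prop-pse-decay}): apply a small control $\varepsilon f_1$ over the \emph{full} time $T=2\pi$, where the first-order correction $\bar\phi_1$ in the transverse direction solves the linear problem $\Box v+v=g$. Exact controllability of this linear equation on $[0,2\pi]$ lets one choose $g$ supported in $\omega$ with $(v,v_t)(2\pi)=(-1,0)$, and a direct computation then gives
\[
E(2\pi)-E(0)=\varepsilon^2\Bigl[\int_{\Sph^1}\bigl(|v_x|^2+|v_t|^2-|v|^2\bigr)\,dx\Bigr]_0^{2\pi}+O(\varepsilon^3)=-2\pi\varepsilon^2+O(\varepsilon^3).
\]
Your instinct that the linearization and the transverse direction are the key is correct---the sign in $\Box v+v=g$ (rather than $\Box v-v=g$) is precisely the saddle structure you invoke---but the way to exploit it is through linearized exact controllability on a time interval of length at least $2\pi$, producing an explicit energy drop in one shot, not a short kick followed by dissipation.
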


After  this result on the global exact controllability property without any control period restriction, we try to seek global controllability results within optimal control time. Again, starting by working with the one-dimensional target, we  show that $T= 2\pi$ is sufficient  to control the system when the target is $\mathbb{S}^1$. Under this circumstance, the controlled wave maps equations can be transformed into a controlled linear wave equation for which the controllability properties are well-known. This gives the following theorem.
\begin{thm}[Global exact controllability within sharp time for $\mathbb{S}^1$-target] \label{THM-optimaltime-S1}
    The controlled wave maps equation $\phi: \mathbb{R}\times \mathbb{S}^1\rightarrow \mathbb{S}^1$ is globally exactly controllable in   homotopy class. Moreover, this controllability holds in every time $T>T_0$ with
    \begin{equation}\label{Optimal-control-time}
    T_0:= \max_{x\in \mathbb{T}} \left\{\min\{\alpha\geq 0: x+\alpha\in \omega\}\right\}\leq 2\pi.
\end{equation}
and $T_0$ is optimal for this property. \jm{Before it was said that one has this controllability holds in time $T_0$. But it seems to be wrong since we required that the support of the control at any time is included in the open set $\omega$.}
\end{thm}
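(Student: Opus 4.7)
The plan is to exploit the scalar nature of the target $\Sph^1$: the controlled wave maps equation \eqref{eq:inhomowavemaps} is equivalent, via the phase, to a scalar linear wave equation, so that the theorem reduces to the classical internal controllability of the 1D linear wave equation on the torus. On the universal cover, parametrize $\phi=(\cos\theta,\sin\theta)$. Since $\Sph^1$ is flat, projecting \eqref{eq:inhomowavemaps} onto the unit tangent $(-\sin\theta,\cos\theta)$ shows that \eqref{eq:inhomowavemaps} is equivalent to the scalar linear wave equation $\Box\theta=g$, where $g:=\langle f,(-\sin\theta,\cos\theta)\rangle$ is a scalar control whose support is contained in that of $f$, hence in $[0,T]\times\omega$; the normal component along $\phi$ is automatic, since it reduces to the identity $\langle \Box\phi,\phi\rangle=|\phi_t|^2-|\phi_x|^2$, itself a consequence of $|\phi|\equiv 1$.

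For the homotopy class constraint, any continuous curve $t\mapsto\phi(t,\cdot)\in C(\Sph^1;\Sph^1)$ has a degree which is locally constant in $t$, hence constant, so $u[0]$ and $u[T]$ must share a common degree $n\in\Z$, which is exactly the condition of Definition \ref{def-con-homo} when the target is $\Sph^1$. Conversely, given $u[0]$ and $u[T]$ of common degree $n$, fix continuous lifts $\theta[0],\theta[T]:\R\to\R$ satisfying $\theta(x+2\pi)=\theta(x)+2\pi n$ (unique up to an additive $2\pi\Z$ constant, chosen arbitrarily). Then $\tilde\theta[\cdot]:=\theta[\cdot]-nx$ are genuine $2\pi$-periodic elements of $H^1_x\times L^2_x(\Sph^1)$, and $\Box\theta=g$ rewrites verbatim as $\Box\tilde\theta=g$ on $\Sph^1$.

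The problem is thereby reduced to: find $g\in L^2([0,T]\times\Sph^1)$ with $\mathrm{supp}\, g\subset[0,T]\times\omega$ driving $\Box\tilde\theta=g$ from the prescribed initial datum $\tilde\theta[0]$ to the prescribed final datum $\tilde\theta[T]$. For $T>T_0$, every characteristic of $\Box$ on $[0,T]\times\Sph^1$ enters $\omega$ (this is the geometric control condition, which indeed amounts to the maximum defining $T_0$ being $<T$), and this is the classical internal controllability of the 1D linear wave equation on the torus. It can be handled either by HUM duality combined with the standard 1D observability inequality, or very concretely by working with the Riemann invariants $R:=\tilde\theta_t-\tilde\theta_x$ and $L:=\tilde\theta_t+\tilde\theta_x$: each satisfies a transport equation with source $\pm g$, is transported along one family of characteristics, and can therefore be adjusted to any prescribed value by a suitable source during its passage through $\omega$. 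Setting $f:=g\cdot(-\sin\theta,\cos\theta)$ and reconstructing $\phi=(\cos\theta,\sin\theta)$ then yields the required control for \eqref{eq:inhomowavemaps}.

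Optimality of $T_0$ is a finite speed of propagation argument. If $T<T_0$, pick $x^*\in\Sph^1$ for which $\min\{\alpha\geq 0:x^*+\alpha\in\omega\}>T$; such an $x^*$ exists by the definition of $T_0$. Then the right-going characteristic $\{(s,x^*+s):0\leq s\leq T\}$ avoids $\omega$, so the Riemann invariant $R$ is transported freely along it and $R(T,x^*+T)=R(0,x^*)$ irrespective of $g$. Choosing two states of degree $n$ whose Riemann invariants disagree at these two points furnishes an uncontrollable pair, showing that no $T<T_0$ suffices. The main bookkeeping step, beyond this essentially classical picture, is to handle carefully the $2\pi\Z$-ambiguity of the lift and to check that the $f$ reconstructed from $g$ has the regularity and support claimed in the statement; the rest of the argument is the standard 1D wave theory.
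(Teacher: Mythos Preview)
Your proposal is correct and follows essentially the same route as the paper's proof in Section~\ref{sec-op-contime}: lift $\phi$ to a phase $\theta$ with $\theta(x+2\pi)=\theta(x)+2\pi n$, subtract $nx$ to obtain a genuinely $2\pi$-periodic unknown, and reduce to the classical internal controllability of the scalar wave equation $\Box\tilde\theta=g$ on $\mathbb{T}$. You additionally spell out the optimality argument via a Riemann invariant transported along a characteristic that misses $\omega$, and the reconstruction $f=g\,(-\sin\theta,\cos\theta)$; the paper merely asserts optimality by reference to the classical theory, so your write-up is slightly more self-contained but not methodologically different.
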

However, for the more complicated $\mathbb{S}^k$-target case this technique is no longer valid.  This problem becomes much more delicate in the sense that the nonlinear term plays a leading role for large states, especially for states that are far away from geodesics. Indeed, such a sharp control period   problem still remains open for  wave maps equations for general target $\mathcal{M}$. For 1D semilinear wave equations, let us mention \cite{1993-Zuazua-AIHP} by E. Zuazua which gives the optimal time. \\

This paper is organized as follows. We first present some preliminary results concerning the well-posedness issues of the wave maps equations and the controlled wave maps equations in Section \ref{sec:pre}. Then, Section \ref{sec:stabobs}  is devoted to the proof of Theorem \ref{THM-nounifdecay} concerning generic obstruction on uniform asymptotic stabilization at $2\pi$-energy level. In Section \ref{sec-glo-cont} we show that the controlled wave maps equation is indeed globally exact controllable, namely Theorem \ref{THM-globalexact}. Lastly, some discussion on sharp control time, which is related to Theorem \ref{THM-optimaltime-S1},  is contained in Section \ref{sec-op-contime}.

\section{Some preliminary results}\label{sec:pre}

In this preliminary section we recall some well-posedness and continuous dependence results that will be used later on.
 First of all, thanks to  direct energy estimates, there exists some effectively computable  $Q>1$, depending only on the value of $\|a(x)\|_{L^{\infty}}$,  such that any solution of the damped wave maps equation \eqref{eq:dwm} satisfies
\begin{equation}\label{estimate-with-Q}
   Q^{-1} E(-16\pi)\leq E(0)\leq  E(-16\pi).
\end{equation}
We stress that throughout this paper we work with the energy based regularity, namely $(\phi, \phi_t)(x)\in H^1\times L^2(\Sph^1)$, while the wave maps equation is known to be well-posed in the less regular $H^s$ space with $s>3/4$ \cite{Keel-Tao-1998}.  
 For readers' convenience, let us  start by giving the definition of the inhomogeneous wave maps equations:

\begin{defn}\label{def:sol:open}
    Let $T_1\in \mathbb{R}$, $T_2\in (T_1, +\infty)$.  Let $(g_0, g_1)\in L^2(\mathbb{S}^1; T \mathbb{S}^1)$ and $f\in L^2(T_1, T_2; L^2_x(\mathbb{S}^1; \mathbb{R}^{k+1}))$. A solution to the Cauchy problem of the inhomegeneous wave maps equation 
\begin{equation}\label{eq:cauchyinhomowavemaps}
\begin{cases}
         \Box \phi= \left(|\phi_{t}|^2- |\phi_{ x}|^2\right) \phi+
     f^{\phi^{\perp}}, \; \forall (t, x)\in (T_1, T_2)\times \mathbb{S}^1, \\  (\phi, \phi_t)(T_1)= (g_0, g_1),
     \end{cases}
\end{equation}
is a function $(\phi, \phi_t): [T_1, T_2]\times \mathbb{S}^1\rightarrow T\mathbb{S}^k$ that belongs to $C([T_1, T_2]; H^1\times L^2(\mathbb{S}^1; T\mathbb{S}^k))$
such that, for every $\tau\in (T_1, T_2]$, for every $\varphi(t, x)\in C^1([T_1, \tau]\times \mathbb{S}^1)$ one has 
\begin{align}
    &\;\;\;\;\; \int_{T_1}^{\tau} \int_{\mathbb{S}^1} \left(\phi_x\cdot \varphi_x- \phi_t\cdot \varphi_t+ (|\phi_t|^2- |\phi_x|^2)\phi\cdot \varphi+ f^{\phi^{\perp}}\cdot \varphi\right)(t, x) dt dx \notag\\
    &= \int_{\mathbb{S}^1} \phi_t \cdot \varphi (T_1, x) dx- \int_{\mathbb{S}^1} \phi_t \cdot \varphi (\tau, x) dx. \label{eq:testfunction}
\end{align}
\end{defn}
Similarly, one can define a solution of the damped wave maps equation by replacing $f$ by $a(x) \phi_t$. Concerning the inhomogeneous wave maps equation and the damped wave maps equation, one has the following well-posedness results:

\begin{lem}[Well-posedness of the inhomogeneous wave maps equation and the damped wave maps equation: \cite{Krieger-Xiang-2022}]\label{lem:inhwm}
Let $\bar T >0$.  Denote the domain  $[-\bar T, \bar T]\times \mathbb{S}^1$ by  $D$.

(i) For any initial state  $\phi[0]: \Sph^1\rightarrow T\mathbb{S}^k$ in $H^1_x\times L^2_x$ and any source term $f(t, x): [-\bar T, \bar T]\times \Sph^1\rightarrow \mathbb{R}^{k+1}$ in $L^2_{t,x}(D)$, the inhomogeneous wave maps equation \eqref{eq:inhomowavemaps}
admits a unique solution $\phi[t]$ on the time interval
 $t\in [- \bar T, \bar T]$. This solution takes values in the target $T\Sph^k$:
\begin{gather*}
     (\phi, \phi_t)(t, x)\in  T\mathbb{S}^k.
\end{gather*}
 Moreover, there exists some effectively computable constant $C_w>0$ depending only on the value of $\bar T$ such that  this unique solution verifies the energy estimates\footnote{We use the notation on the standard null coordinates $u = x+t,\,v = t - x$.  Note that under the null coordinates of $(u, v)$ the diamond-shaped region $D$ is not a standard rectangular-shaped region, thus the $L^2_uL^{\infty}_v(D)$-norm ($resp$. $L^2_v L^{\infty}_u(D)$-norm,  $L^{\infty}_u L^2_v$-norm, $L^{\infty}_v L^2_u$-norm and  $L^{2}_uL^2_v$-norm) of a function $f$ can be understood as $\|\tilde f\|_{L^2_uL^{\infty}_v(\tilde D)}$, where we extend $D$ to a rectangle region $\tilde D$ under $(u, v)$-coordinate and extend $f$ to $\tilde f$ trivially as zero in the region $\tilde D\setminus D$.   }
\begin{gather*}
     \|\phi[t]\|_{\dot{H}^1_x\times L^2_x}\leq C_w \left(\|\phi[0]\|_{\dot{H}^1_x\times L^2_x}+  \|f\|_{L^2_{t, x}(D)}\right), \; \forall t\in [-\bar T, \bar T],\\
   \|\phi_v\|_{L^{2}_v L^{\infty}_u\cap L^{\infty}_u L^{2}_v(D)}+  \|\phi_u\|_{L^{2}_u L^{\infty}_v\cap L^{\infty}_v L^{2}_u(D)}\leq C_w \left(\|\phi[0]\|_{\dot{H}^1_x\times L^2_x}+  \|f\|_{L^2_{t, x}(D)}\right).
\end{gather*}

(ii) For any initial state  $\phi[0]: \Sph^1\rightarrow  T\mathbb{S}^k$ in $H^1_x\times L^2_x$, the damped wave maps equation  \eqref{eq:dwm}
admits a unique solution $\phi[t]$ for 
 $t\in \mathbb{R}$ This solution takes values in the target $\Sph^k$:
\begin{gather*}
     (\phi, \phi_t)(t, x)\in  T\mathbb{S}^k.
\end{gather*}
 Moreover,  this unique solution verifies the energy estimates
\begin{gather*}
     \|\phi[t]\|_{\dot{H}^1_x\times L^2_x}\leq C_w \|\phi[0]\|_{\dot{H}^1_x\times L^2_x}, \; \forall t\in [- \bar T, \bar T],\\
   \|\phi_v\|_{L^{2}_v L^{\infty}_u\cap L^{\infty}_u L^{2}_v(D)}+  \|\phi_u\|_{L^{2}_u L^{\infty}_v\cap L^{\infty}_v L^{2}_u(D)}\leq C_w \|\phi[0]\|_{\dot{H}^1_x\times L^2_x}.
\end{gather*}

\end{lem}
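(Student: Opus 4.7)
The approach follows the standard null-coordinate scheme for $(1+1)$-dimensional wave maps at the energy regularity, exploiting the null-form structure of the geometric nonlinearity; detailed arguments are carried out in \cite{Krieger-Xiang-2022}, and I only sketch the plan below.

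First, I would pass to the characteristic coordinates $u = x+t$, $v = t-x$, under which $\Box = -4\,\partial_u\partial_v$ and $|\phi_t|^2 - |\phi_x|^2 = -4\,\phi_u \cdot \phi_v$. The inhomogeneous equation then reads
\[
\partial_u\partial_v \phi \;=\; (\phi_u \cdot \phi_v)\,\phi \;-\; \tfrac14\, f^{\phi^\perp},
\]
which, after integration in $v$ along the lines $u=\mathrm{const}$ (respectively in $u$), yields a coupled pair of transport equations for $(\phi_u,\phi_v)$ with data prescribed on the characteristic initial curve coming from $(\phi,\phi_t)(0)$. The Cauchy problem is thereby reduced to an integral fixed-point problem. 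I would then run a Picard iteration in the Banach space
\[
X_{\bar T} \;=\; \bigl\{\phi \in C([-\bar T,\bar T];\,H^1 \times L^2) : \phi_u \in L^{\infty}_v L^2_u \cap L^2_u L^{\infty}_v,\ \phi_v \in L^{\infty}_u L^2_v \cap L^2_v L^{\infty}_u \bigr\},
\]
normed by the sum of the quantities appearing on the right-hand side of the stated estimates. The key bilinear ingredient is the null-form bound
\[
\|(\phi_u \cdot \phi_v)\,\phi\|_{L^1_v L^2_u} \;\leq\; C\,\|\phi\|_{L^{\infty}_{t,x}}\,\|\phi_u\|_{L^{\infty}_v L^2_u}\,\|\phi_v\|_{L^2_v L^{\infty}_u},
\]
together with its symmetric $L^1_u L^2_v$ counterpart and the trivial $\|f^{\phi^\perp}\|_{L^2_{t,x}} \leq \|f\|_{L^2_{t,x}}$. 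Since the problem is scaling critical in these norms, contraction is first obtained on a short time slab; one then iterates, using the pointwise identity $|\phi|=1$ (established below) to keep $\|\phi\|_{L^\infty_{t,x}}$ under control, and concatenates finitely many slabs to cover $[-\bar T,\bar T]$.

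The sphere constraint and the energy identity are then checked in parallel. Setting $\rho = |\phi|^2 - 1$, the identities $f^{\phi^\perp} \cdot \phi = 0$ and $((\phi_u \cdot \phi_v)\phi)\cdot\phi = (\phi_u \cdot \phi_v)(\rho+1)$ show that $\rho$ satisfies a linear wave equation with vanishing Cauchy data, so $\rho \equiv 0$ and $\phi\cdot \phi_t \equiv 0$. Testing the PDE with $\phi_t$ then kills the geometric nonlinearity and yields
\[
\tfrac{d}{dt} E(t) \;=\; 2\int_{\mathbb{S}^1} f \cdot \phi_t \, dx,
\]
which, combined with Grönwall, produces the announced $\dot H^1 \times L^2$ bound; the null-coordinate estimates are inherited directly from $X_{\bar T}$. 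Part (ii) is handled by the same scheme with $a(x)\phi_t$ in place of $f^{\phi^\perp}$: the corresponding identity reads $\frac{d}{dt}E = -2\int a|\phi_t|^2 \leq 0$, which supplies a uniform a priori bound and extends the local solution to all of $\mathbb{R}$.

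The main obstacle throughout is closing the bilinear estimate at energy regularity. The nonlinearity $(\phi_u\cdot\phi_v)\phi$ is precisely scaling critical in $H^1 \times L^2$, and without the null-form factorization together with the mixed-norm gain $L^2_v L^{\infty}_u$ the contraction cannot be closed. Once the null form is exploited, the remainder of the argument — uniqueness, continuous dependence, patching, and preservation of the target constraint — is essentially routine.
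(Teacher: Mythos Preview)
The paper does not prove this lemma at all: it is quoted verbatim from \cite{Krieger-Xiang-2022} and simply used as a black box. Your sketch is the standard null-form/Picard scheme used there, and the outline (null coordinates, bilinear estimate $\|(\phi_u\!\cdot\!\phi_v)\phi\|_{L^1_vL^2_u}\lesssim \|\phi\|_{L^\infty}\|\phi_u\|_{L^\infty_vL^2_u}\|\phi_v\|_{L^2_vL^\infty_u}$, constraint preservation via the linear wave equation for $\rho=|\phi|^2-1$, then energy identity and Gr\"onwall) is correct and matches that reference.

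Two cosmetic sign slips worth fixing: with $u=x+t$, $v=t-x$ one has $|\phi_t|^2-|\phi_x|^2=+4\,\phi_u\!\cdot\!\phi_v$ (not $-4$), so $\partial_u\partial_v\phi=-(\phi_u\!\cdot\!\phi_v)\phi-\tfrac14 f^{\phi^\perp}$; and testing against $\phi_t$ gives $\tfrac{d}{dt}E=-2\int_{\mathbb{S}^1} f\!\cdot\!\phi_t\,dx$ (compare \eqref{ener_directes}). Neither sign affects the estimates you claim.
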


Similar to Lemma \ref{lem:inhwm}, and using the same proof, one also has the following well-posedness result for  the free inhomogeneous wave equation.
\begin{lem}[Well-posedness of the inhomogeneous wave equation]\label{lem:freewave}
Let $ \bar T >0$.  There exists an effectively computable  constant $C=C(\bar T)$ such that,  for any $T\in (0, \bar T]$, for any initial state  $\phi[0]: \Sph^1\rightarrow \mathbb{R}^{k+1}\times \mathbb{R}^{k+1}$ in $H^1_x\times L^2_x$ and any source term $f(t, x): [-T,T]\times \Sph^1\rightarrow \mathbb{R}^{k+1}$ in $L^2_{t, x}(D)$, the inhomogeneous wave  equation
\begin{equation}\label{eq:inhomowavee}
    \Box \phi= f \; \textrm{ with } (\phi, \phi_t)(0, x)= \phi[0],
\end{equation}
admits a unique solution $\phi[t]$ in the time interval $t\in [-T, T]$.
 Moreover,  this unique solution verifies the energy estimates
\begin{gather*}
     \|\phi[t]\|_{H^1_x\times L^2_x}\leq C\left(\|\phi[0]\|_{H^1_x\times L^2_x}+ T^{\frac{1}{2}}\|f\|_{L^2_{t,x}([-T, T]\times \Sph^1)}\right), \; \forall t\in [-T, T],\\
   \|\phi_v\|_{L^{2}_v L^{\infty}_u\cap L^{\infty}_u L^{2}_v([-T, T]\times \Sph^1)}+  \|\phi_u\|_{L^{2}_u L^{\infty}_v\cap L^{\infty}_v L^{2}_u([-T, T]\times \Sph^1)}\\
    \leq C \left(\|\phi[0]\|_{\dot{H}^1_x\times L^2_x}+  T^{\frac{1}{2}}\|f\|_{L^2_{t, x}([-T, T]\times \Sph^1)}\right).
\end{gather*}

\end{lem}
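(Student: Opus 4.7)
The plan is to follow the same three-step strategy as in Lemma \ref{lem:inhwm}, noting that the linear case is strictly easier since the quadratic nonlinearity $(|\phi_t|^2-|\phi_x|^2)\phi$ and the orthogonal projection $f^{\phi^{\perp}}$ are absent. The three ingredients are: existence/uniqueness of a solution, the basic $H^1_x\times L^2_x$ energy bound, and the null coordinate bounds for $\phi_u$ and $\phi_v$.

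First, for existence and uniqueness on $[-T,T]\times\mathbb{S}^1$, I would use Fourier series on $\Sph^1$ together with the Duhamel representation
\begin{equation*}
\phi(t,\cdot) = \cos(tD)\phi(0,\cdot) + D^{-1}\sin(tD)\phi_t(0,\cdot) + \int_{0}^{t} D^{-1}\sin((t-s)D)\,f(s,\cdot)\,ds,
\end{equation*}
where $D=\sqrt{-\Delta}$ (with the zero-frequency mode handled separately). This gives a solution in $C([-T,T];H^1_x\times L^2_x)$, and uniqueness follows from applying the standard energy identity (step two) to the difference of two putative solutions with zero data. Next, multiplying $\Box\phi=f$ by $\phi_t$ and integrating in $x$ yields
\begin{equation*}
\frac{d}{dt}\tfrac{1}{2}\|\phi[t]\|_{\dot H^1_x\times L^2_x}^2 = \int_{\Sph^1} f\,\phi_t\,dx \leq \tfrac{1}{2}\|f(t,\cdot)\|_{L^2_x}^2+\tfrac{1}{2}\|\phi_t(t,\cdot)\|_{L^2_x}^2,
\end{equation*}
and Gronwall on $[-T,T]$ produces the first estimate, with the factor $T^{1/2}$ appearing by Cauchy--Schwarz in time applied to $\|f\|_{L^2_{t,x}}$; the full $H^1_x\times L^2_x$ norm (as opposed to $\dot H^1_x\times L^2_x$) is obtained by additionally tracking the zero Fourier mode, which satisfies the ODE $\ddot c_0 = \hat f(t,0)$ and so is controlled on $[-T,T]$ by $T\leq \bar T$.

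For the null coordinate estimates I would rewrite $\Box$ in $(u,v)=(x+t,\,t-x)$ coordinates as $\Box = -4\partial_u\partial_v$, so that $\Box\phi=f$ becomes $\partial_u\phi_v = -f/4$. Integrating along a line of constant $v$ from an initial time slice contained in the rectangular extension $\tilde D\supset D$ gives
\begin{equation*}
\phi_v(u,v) = \phi_v(u_0,v) - \tfrac{1}{4}\int_{u_0}^{u} f(u',v)\,du',
\end{equation*}
and Cauchy--Schwarz in $u'$ immediately produces the pointwise-in-$u$ bound $\|\phi_v(u,\cdot)\|_{L^2_v}\lesssim \|\phi[0]\|_{\dot H^1\times L^2}+T^{1/2}\|f\|_{L^2_{t,x}}$, yielding the $L^\infty_u L^2_v$ norm; the $L^2_v L^\infty_u$ norm follows by taking $L^\infty_u$ first, bounding the forcing term by $T^{1/2}\|f(\cdot,v)\|_{L^2_u}$, and then $L^2_v$. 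The bounds on $\phi_u$ are obtained symmetrically from $\partial_v\phi_u = -f/4$.

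The only genuine technical nuisance is the diamond-versus-rectangle issue flagged in the footnote to Lemma \ref{lem:inhwm}: the relevant mixed norms are defined over $D$, not over a product set. This is resolved exactly as indicated in the footnote, by extending $D$ to the smallest rectangle $\tilde D$ in $(u,v)$ containing it and extending $f$ by zero outside $D$, since all estimates above are monotone in the domain and in $\|f\|_{L^2_{t,x}}$. Because the equation is linear, no iteration or fixed point argument is needed, and no obstacle beyond bookkeeping arises; this is the sense in which the proof uses \emph{the same} strategy as Lemma \ref{lem:inhwm} without its nonlinear bootstrap.
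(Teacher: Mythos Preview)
Your proposal is correct and matches the paper's intent: the paper does not give a separate proof of this lemma, stating only that it follows ``similar to Lemma~\ref{lem:inhwm}, and using the same proof,'' and your sketch is precisely the linear specialization of that argument (Duhamel/energy identity for the $H^1\times L^2$ bound, integration along characteristics for the null-coordinate norms), without the nonlinear bootstrap. One minor point of presentation: the AM--GM split you wrote followed by Gronwall yields a bound of the form $C(\bar T)\bigl(\|\phi[0]\|+\|f\|_{L^2_{t,x}}\bigr)$ rather than one carrying the explicit $T^{1/2}$; to get the stated $T^{1/2}$ factor you should instead use $\tfrac{d}{dt}E^{1/2}\le \|f(t)\|_{L^2_x}$ directly and then apply Cauchy--Schwarz in time, which is presumably what you meant by ``Cauchy--Schwarz in time applied to $\|f\|_{L^2_{t,x}}$.''
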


\begin{lem}\label{lem-conti-dep-inh} {\rm (Continuous dependence of the inhomogeneous wave maps equation and the damped wave maps equation).}
Let $\bar T >0$, let $M>0$. Denote the region  $[-\bar T, \bar T]\times \mathbb{S}^1$ by $D$.

(i) There exists an effectively computable constant 
$C= C(\bar T, M)$ such that,  for any initial states  $\phi[0], \varphi[0]: \Sph^1\rightarrow T\mathbb{S}^k$ in $H^1_x\times L^2_x$ and any source terms $f(t, x), g(t, x): [-\bar T, \bar T]\times \Sph^1\rightarrow \mathbb{R}^{k+1}$ in $L^2_{t, x}(D)$ satisfying
\begin{gather*}
\lVert \phi[0]\lVert_{\dot{H}^1\times L^2}+ \lVert \varphi[0]\lVert_{\dot{H}^1\times L^2}+ \lVert f\lVert_{L^2_{t, x}(D)}+ \lVert g\lVert_{L^2_{t, x}(D)}\leq  M,
\end{gather*}
 the unique solutions of the inhomogeneous wave maps equations
\begin{equation}\label{eq:phi1}
    \Box \phi= \left(|\phi_{t}|^2- |\phi_{x}|^2\right) \phi+  f^{\phi^{\perp}},  \; (\phi, \phi_t)(0, x)= \phi[0],
\end{equation}
and
\begin{equation}\label{eq:varphi1}
    \Box \varphi= \left(|\varphi_{t}|^2- |\varphi_{x}|^2\right) \varphi+  g^{\varphi^{\perp}},  \; (\varphi, \varphi_t)(0, x)= \varphi[0],
\end{equation}
satisfy  the following energy estimates, where $w:= \phi- \varphi$,
\begin{gather*}
   \|(w_x, w_t, w)\|_{L^{\infty}_t L^2_x(D)}+  \|w_u\|_{L^{2}_u L^{\infty}_v\cap L^{\infty}_v L^{2}_u(D)}+  \|w_v\|_{L^{2}_v L^{\infty}_u\cap L^{\infty}_u L^{2}_v(D)} \\
   \leq C\left(\lVert w[0]\lVert_{H^1\times L^2}+ \lVert f-g\lVert_{L^2_{t, x}(D)}\right).
\end{gather*}

(ii) For any initial states  $\phi[0], \varphi[0]: \Sph^1\rightarrow T\mathbb{S}^k$ in $H^1_x\times L^2_x$  satisfying
\begin{gather*}
\lVert \phi[0]\lVert_{\dot{H}^1\times L^2}+ \lVert \varphi[0]\lVert_{\dot{H}^1\times L^2}\leq  M,
\end{gather*}
 the unique solutions of  the damped wave maps equations
\begin{equation}
    \Box \phi= \left(|\phi_{t}|^2- |\phi_{x}|^2\right) \phi+  a(x) \phi_t,  \; (\phi, \phi_t)(0, x)= \phi[0],  \notag
\end{equation}
and
\begin{equation}
    \Box \varphi= \left(|\varphi_{t}|^2- |\varphi_{x}|^2\right) \varphi+   a(x) \varphi_t,  \; (\varphi, \varphi_t)(0, x)= \varphi[0], \notag
\end{equation}
satisfy  the following energy estimates, where $w:= \phi- \varphi$,
\begin{gather*}
   \|(w_x, w_t, w)\|_{L^{\infty}_t L^2_x(D)}+  \|w_u\|_{L^{2}_u L^{\infty}_v\cap L^{\infty}_v L^{2}_u(D)}+  \|w_v\|_{L^{2}_v L^{\infty}_u\cap L^{\infty}_u L^{2}_v(D)} \\
   \leq C \lVert w[0]\lVert_{H^1\times L^2}.
\end{gather*}
\end{lem}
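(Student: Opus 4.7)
\medskip

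\textbf{Proof proposal.} I will focus on part (i); part (ii) is strictly easier because the extra term $a(x)(\phi_t-\varphi_t)=a(x)w_t$ is linear in $w_t$ and bounded via $\|a\|_{L^\infty}$. The plan is to set up the linear wave equation satisfied by $w:=\phi-\varphi$, expand the nonlinear differences so as to expose the null structure of the wave maps system, and then close the estimate by Lemma \ref{lem:freewave} combined with a bootstrap on short time subintervals.

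First I would subtract \eqref{eq:phi1} from \eqref{eq:varphi1} to get
\begin{equation*}
\Box w=\bigl[(|\phi_t|^2-|\phi_x|^2)\phi-(|\varphi_t|^2-|\varphi_x|^2)\varphi\bigr]+\bigl[f^{\phi^\perp}-g^{\varphi^\perp}\bigr].
\end{equation*}
Using the null-coordinate identity $|\phi_t|^2-|\phi_x|^2=-4\,\phi_u\cdot\phi_v$ together with the telescoping decomposition
\begin{equation*}
-4\phi_u\cdot\phi_v\,\phi+4\varphi_u\cdot\varphi_v\,\varphi
=-4\,w_u\cdot\phi_v\,\phi-4\,\varphi_u\cdot w_v\,\phi-4\,\varphi_u\cdot\varphi_v\,w,
\end{equation*}
the first bracket is recast as a sum of terms each involving one factor among $w$, $w_u$, $w_v$, multiplied by quantities controlled by the null-frame norms of the solutions. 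For the projection bracket I would write
\begin{equation*}
f^{\phi^\perp}-g^{\varphi^\perp}=(f-g)^{\phi^\perp}-\langle g,\phi-\varphi\rangle\phi-\langle g,\varphi\rangle(\phi-\varphi),
\end{equation*}
which splits it into a part of size $|f-g|$ and parts controlled by $|g|\cdot|w|$ (recall $|\phi|=|\varphi|=1$).

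Next I would invoke Lemma \ref{lem:freewave} applied to the linear inhomogeneous wave equation satisfied by $w$. For the source $\Box w=\mathcal{N}$, the $L^2_{t,x}(D)$ norm of each term is bounded by Cauchy--Schwarz in the null directions: e.g.\
\begin{equation*}
\|w_u\cdot\phi_v\|_{L^2_{t,x}}\leq\|w_u\|_{L^\infty_v L^2_u}\,\|\phi_v\|_{L^2_v L^\infty_u},
\end{equation*}
and analogously for $\varphi_u\cdot w_v$ and $\varphi_u\cdot\varphi_v\,w$. By Lemma \ref{lem:inhwm}, the null-frame norms of $\phi$ and $\varphi$ are controlled by $M$ on the whole diamond $D$. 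The projection remainder contributes $\|(f-g)\|_{L^2_{t,x}}+C(M)\|w\|_{L^\infty_t L^2_x}$.

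Finally, I would run a bootstrap on a partition of $[-\bar T,\bar T]$ into intervals of length $\delta=\delta(M)$ small. On each such interval, the null-frame norms of $\phi,\varphi$ restricted to the sub-diamond are small (bounded by a factor going to $0$ with $\delta$, via absolute continuity of $L^2$ integrals), so the $w$-dependent part of the nonlinearity is absorbed into the left-hand side of the estimate coming from Lemma \ref{lem:freewave}. Iterating across consecutive intervals, and using the standard $\dot H^1\times L^2$-to-$L^\infty_t L^2_x$ control of $w$ itself obtained by integrating $w_t$ in time, yields the claimed bound. I expect the main subtle point to be justifying that the relevant null-frame norms over a short sub-diamond are indeed small; this follows from the global bound on the null-frame norms of $\phi,\varphi$ by absolute continuity, but must be made quantitative to fix the partition size $\delta(M)$ so that the bootstrap closes uniformly. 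Part (ii) is then handled identically upon replacing the $f^{\phi^\perp}-g^{\varphi^\perp}$ analysis by the trivial estimate $\|a w_t\|_{L^2_{t,x}}\leq\|a\|_{L^\infty}\|w_t\|_{L^2_{t,x}}$.
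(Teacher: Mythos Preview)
Your overall strategy matches the paper's proof almost exactly: derive the linear wave equation for $w$, expose the null structure of the quadratic difference via the telescoping identity, bound each piece by the null-frame H\"older inequality together with the a priori bounds on $\phi,\varphi$ from Lemma~\ref{lem:inhwm}, and close on short subintervals by iteration. The decomposition of $f^{\phi^\perp}-g^{\varphi^\perp}$ is also equivalent to the paper's (the paper telescopes through $f$ rather than $g$, but the outcome is the same).

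The one place that needs correction is your mechanism for closing the bootstrap. You assert that on a short time slab the null-frame norms of $\phi,\varphi$ themselves become small ``via absolute continuity of $L^2$ integrals.'' This fails in the present geometry: on $Q_\delta=[t_0,t_0+\delta]\times\mathbb{S}^1$ the total $v$-range (and likewise the total $u$-range) still has length $\sim 2\pi$, because $x$ runs over all of $\mathbb{S}^1$; only the $u$-range \emph{for each fixed $v$} shrinks with $\delta$. Hence $\|\phi_v\|_{L^2_vL^\infty_u(Q_\delta)}$ need not tend to $0$ as $\delta\to 0$, and even when it does the rate depends on the particular solution rather than only on $M$, so one cannot fix $\delta(M)$ uniformly this way. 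The paper instead closes the loop via the explicit $T^{1/2}$ factor already built into Lemma~\ref{lem:freewave}: from $\|\Box w\|_{L^2_{t,x}(Q_T)}\lesssim_{M}\|w\|_{\mathcal{W}_T}+\|f-g\|_{L^2_{t,x}}$ one obtains
\[
\|w\|_{\mathcal{W}_T}\lesssim \|w[0]\|_{H^1\times L^2}+T^{1/2}C(\bar T,M)\,\|w\|_{\mathcal{W}_T}+T^{1/2}\|f-g\|_{L^2_{t,x}},
\]
and choosing $T$ small depending only on $\bar T,M$ absorbs the middle term. This is both simpler and quantitatively uniform. With that single replacement your argument goes through, and part (ii) is indeed handled as you describe.
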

\begin{proof}[Proof of Lemma \ref{lem-conti-dep-inh}]
We only prove the result concerning the inhomogeneous equation, as the same proof yields the continuous dependence result on the damped wave maps equation.   The proof is straightforward and is based on a bootstrap argument. Let $T\in (0, \bar T)$ with its value to be fixed later on.  For ease of notation, we shall denote the region $(0, T)\times \mathbb{S}^1$ as $Q_T$, and define the norm $\mathcal{W}_T$ by
\begin{equation}
\lVert w\lVert_{\mathcal{W}_T}:= \|(w_x, w_t, w)\|_{L^{\infty}_t L^2_x(Q_T)}+  \|w_u\|_{L^{2}_u L^{\infty}_v\cap L^{\infty}_v L^{2}_u(Q_T)}+  \|w_v\|_{L^{2}_v L^{\infty}_u\cap L^{\infty}_u L^{2}_v(Q_T)}.
\end{equation}
Thanks  to Lemma \ref{lem:inhwm},  we know that  $\lVert \phi\lVert_{\mathcal{W}_{T}}$ and $\lVert \varphi\lVert_{\mathcal{W}_{T}}$ are uniformly bounded by some constant depending on $\bar T$ and $M$.

One immediately deduces from Equations \eqref{eq:phi1} and \eqref{eq:varphi1} that
\begin{equation}
\Box w= (w_{u}\cdot \phi_v) \phi+ (w_{v}\cdot \varphi_u) \phi+ (\varphi_{u}\cdot \varphi_v) w+ f^{\phi^{\perp}}-  g^{\varphi^{\perp}}=: F,   \notag
\end{equation}
thus,   according to   Lemma \ref{lem:freewave} for estimates on $w$, one obtains
\begin{equation}
\lVert w\lVert_{\mathcal{W}_{T}}\lesssim\footnote{Throughout this paper we use the notation $a\lesssim b$ to indicate that $a\leq C b$ where $C$ is some effectively computable constant.} \lVert w[0]\lVert_{H^1\times L^2}+ T^{\frac{1}{2}}\lVert F\lVert_{L^2_{t, x}(Q_T)}.   \notag
\end{equation}

Next, we estimate the value of $F$ in order to close the loop. Successively there is
\begin{align*}
\lVert(w_{u}\cdot \phi_v) \phi\lVert_{L^2_{t, x}(Q_T)}&\lesssim  \lVert \phi_v\lVert_{L^2_v L^{\infty}_u(Q_T)}  \lVert w_u \lVert_{L^{\infty}_v L^2_u(Q_T)}\lesssim \lVert w\lVert_{\mathcal{W}_{T}}  \\
\lVert (w_{v}\cdot \varphi_u) \phi\lVert_{L^2_{t, x}(Q_T)}&\lesssim \lVert w\lVert_{\mathcal{W}_{T}}  \\
\lVert (\varphi_{u}\cdot \varphi_v) w\lVert_{L^2_{t, x}(Q_T)}&\lesssim \lVert w\lVert_{\mathcal{W}_{T}},
\end{align*}
and
\begin{align*}
f^{\phi^{\perp}}-  g^{\varphi^{\perp}}
=& f- \langle f, \phi\rangle \phi+ g- \langle g, \varphi\rangle \varphi \\
=& (f- g)+ \langle f, \phi\rangle w+ \langle f, w\rangle \varphi+ \langle f-g, \varphi\rangle \varphi
\end{align*}
satisfies
\begin{equation*}
\lVert f^{\phi^{\perp}}-  g^{\varphi^{\perp}}\lVert_{L^2_{t, x}(Q_T)}\lesssim \lVert f-g \lVert_{L^2_{t, x}(Q_T)}+ \lVert w\lVert_{\mathcal{W}_{T}}.
\end{equation*}
Therefore, by choosing $T$ sufficiently small   depending only on the values of $\bar T$ and $M$, one has
\begin{equation}
\lVert w\lVert_{\mathcal{W}_{T}}\lesssim \lVert w[0]\lVert_{H^1\times L^2}+ \lVert f-g \lVert_{L^2_{t, x}(Q_T)}.   \notag
\end{equation}
This ends the proof of the first property (i).
\end{proof}

Let us be given a time-varying feedback law $F$, namely a map:
\begin{align}
    F: \mathbb{R}\times H^1_x\times L^2_x(\mathbb{S}^1; T\mathbb{S}^k)&\rightarrow L^2_x(\mathbb{S}^1; \mathbb{R}^{k+1}), \label{maps:F}\\
    (t; (g_0, g_1)(\cdot))&\mapsto F(t, (g_0, g_1)(\cdot))(x)|_{x\in \mathbb{S}^1}. \notag
\end{align}
We assume that this map is a Carathéodory map in the following sense: 
\begin{itemize}
    \item[($\mathcal{P}1$)] $\forall R>0, \exists C_B(R)>0$ such that $\left( \|(g_0, g_1)\|_{\dot H^1\times L^2}\leq R \Rightarrow \|F(t, g_0, g_1)\|_{L^2}\leq C_B(R)\right)$. Moreover, we assume that the function $C_B(R)$ is nondecreasing. 
   \item[($\mathcal{P}2$)] $\forall (g_0, g_1)\in H^1_x\times L^2_x(\mathbb{S}^1; T\mathbb{S}^k)$, the function $t\in \mathbb{R}\mapsto F(t, g_0, g_1)\in L^2(\mathbb{S}^1; \mathbb{R}^{k+1})$ belongs to $L^2_{\textrm{loc}}(\mathbb{R}; L^2(\mathbb{S}^1; \mathbb{R}^{k+1}))$
   \item[($\mathcal{P}3$)] for almost every $t\in \mathbb{R}$, the function $(g_0, g_1)\in H^1_x\times L^2_x(\mathbb{S}^1; T\mathbb{S}^k)\mapsto F(t, g_0, g_1)\in L^2(\mathbb{S}^1; \mathbb{R}^{k+1})$ is continuous. 
\end{itemize}
We also assume that $F$ is a Lipschitz map in the sense that 
\begin{itemize}
    \item[($\mathcal{P}4$)]  for every $R>0$, there exists $K(R)>0$ such that 
\begin{gather*}
    \left(\|(\phi_1, \phi_{1t})\|_{\dot H^1\times L^2}\leq R, \|(\phi_2, \phi_{2t})\|_{\dot H^1\times L^2}\leq R \right) \Rightarrow \\
    \left( \|F(t, \phi_1, \phi_{1t})- F(t, \phi_2, \phi_{2t})\|_{L^2}\leq K(R) \|(\phi_1- \phi_2, \phi_{1t}-\phi_{2t})\|_{H^1\times L^2}\right). 
\end{gather*}
\end{itemize}

We are interested in the stability of the  closed-loop system \eqref{closed-loop-timevarying} with $F$  as defined in \eqref{maps:F}.  First, we give the definition of solutions to this system:
\begin{defn}\label{def:sol:clos}
      Let us be given a map $F$ in form of \eqref{maps:F} that satisfies conditions ($\mathcal{P}1$)--($\mathcal{P}3$).  Let $T_1\in \mathbb{R}$.  Let $(g_0, g_1)\in L^2(\mathbb{S}^1; T \mathbb{S}^1)$. A function $(\phi, \phi_t)$  is a solution to  the Cauchy problem 
\begin{equation}\label{eq:cauchyclosedloop}
\begin{cases}
         \Box \phi(t, x)= \left(|\phi_{t}|^2- |\phi_{ x}|^2\right)(t, x) \phi(t, x)+
     F(t, \phi(t, \cdot), \phi_t(t, \cdot))(x)^{\phi(t, x)^{\perp}}, \; \forall (t, x)\in (T_1, +\infty)\times \mathbb{S}^1, \\  (\phi, \phi_t)(T_1, x)= (g_0, g_1)(x),
     \end{cases}
\end{equation}
if there exists an interval $I$ with a non-empty interior satisfying $I \cap (-\infty, T_1]= \{T_1\}$ such that 
 $(\phi, \phi_t): I\times \mathbb{S}^1\rightarrow T\mathbb{S}^k$ that belongs to $C([T_1, T_2]; H^1\times L^2(\mathbb{S}^1; T\mathbb{S}^k))$  is a solution to the Cauchy problem \eqref{eq:cauchyinhomowavemaps} with $f(t, x)= F(t, \phi(t, \cdot), \phi_t(t, \cdot))(x)$. The interval $I$ is denoted by $D(\phi, \phi_t)$.

We say that a solution $(\phi, \phi_t)$ is maximal if, for every solution $(\tilde \phi, \tilde \phi_t)$ to the Cauchy problem \eqref{eq:cauchyclosedloop} such that 
\begin{gather*}
    D(\phi, \phi_t)\subset D(\tilde \phi, \tilde \phi_t), \\
    (\phi, \phi_t)(t, \cdot)=  (\tilde \phi, \tilde \phi_t)(t, \cdot) \textrm{ for every } t\in D(\phi, \phi_t),
\end{gather*}
one has 
\begin{equation*}
    D(\phi, \phi_t)= D(\tilde \phi, \tilde \phi_t).
\end{equation*}
\end{defn}

\begin{defn}
   Let us be given a map $F$ in form of \eqref{maps:F} that satisfies conditions ($\mathcal{P}1$)--($\mathcal{P}3$). Let $I$ be a nonempty interval of $\mathbb{R}$. A function $(\phi, \phi_t)$ is a solution of the closed-loop system \eqref{closed-loop-timevarying} on $I$ if, for every $[T_1, T_2]\subset I$, the restriction of $(\phi, \phi_t)$ to $[T_1, T_2]\times \mathbb{S}^1$ is a solution of the Cauchy problem \eqref{eq:cauchyclosedloop} with initial state $(\phi, \phi_t)(T_1, \cdot)$.
\end{defn}

For such a closed-loop system we have the following global well-posedness  and continuous dependence result,  the proof of which we put in Appendix \ref{Sec:App:A}.

\begin{prop}\label{lem:wellclosegene}
Let us be given a map $F$ in form of \eqref{maps:F} that satisfies conditions ($\mathcal{P}1$)--($\mathcal{P}4$). 

(i). For every $R\in (0, +\infty)$, there exists a time $T(R)>0$ and a constant $L(R)>0$ such that, 
\begin{itemize}
    \item for every $T_1\in \mathbb{R}$  and for every initial state $(g_0, g_1)$ satisfying $\|(g_0, g_1)\|_{\dot H^1\times L^2}\leq R$, the Cauchy problem \eqref{eq:cauchyclosedloop}   has one and only one solution on $[T_1, T_1+ T(R)]$. 
    \item  for every $T_1\in \mathbb{R}$, for every $(g_0, g_1)$ (\textit{resp}. $(\tilde g_0, \tilde g_1)$) satisfying $\|(g_0, g_1)\|_{\dot H^1\times L^2}\leq R$ (\textit{resp}.$\|(\tilde g_0, \tilde g_1)\|_{\dot H^1\times L^2}\leq R$), the unique solution of the Cauchy problem \eqref{eq:cauchyclosedloop} with initial state $(g_0, g_1)$(\textit{resp}. $(\tilde g_0, \tilde g_1)$) on $[T_1, T_1+ T(R)]$, $(\phi, \phi_t)$ (\textit{resp}. $(\tilde \phi, \tilde \phi_t)$) satisfies 
    \begin{equation*}
        \|(\phi, \phi_t)- (\tilde \phi, \tilde \phi_t)\|_{H^1\times L^2}(t)\leq L(R) \|(g_0, g_1)- (\tilde g_0, \tilde g_1)\|_{H^1\times L^2}, \forall t\in [T_1, T_1+ T(R)].
    \end{equation*}
\end{itemize}

(ii). For every $T_1\in \mathbb{R}$, for every initial state $(g_0, g_1)\in H^1\times L^2(\mathbb{S}^1; T\mathbb{S}^k)$, the Cauchy problem   has one and only one maximal solution $(\phi, \phi_t)$. If $D(\phi, \phi_t)$ is not equal to $[T_1, +\infty)$, then there exists some $\tau\in \mathbb{R}$ such that $D(\phi, \phi_t)= [T_1, \tau)$ and one has 
\begin{equation*}
    \lim_{t\rightarrow \tau^-}\|(\phi, \phi_t)\|_{H^1\times L^2}= +\infty.
\end{equation*}
\end{prop}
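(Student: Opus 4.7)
The strategy is a standard Banach fixed-point argument built on top of the well-posedness theory for the \emph{inhomogeneous} wave maps equation (Lemma \ref{lem:inhwm}) and the continuous dependence estimate (Lemma \ref{lem-conti-dep-inh}(i)), combined with the structural assumptions ($\mathcal{P}1$)--($\mathcal{P}4$) on the feedback law. The fact that we solve the closed-loop problem via a detour through the inhomogeneous equation has the pleasant consequence that the solutions automatically take values in $T\mathbb{S}^k$, since this is built into Lemma \ref{lem:inhwm}.

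\textbf{Step 1 (Setup of the fixed point map).} Fix $R>0$ and $T_1 \in \mathbb{R}$. Let $R' := 2 C_w R$, where $C_w$ is the constant of Lemma \ref{lem:inhwm}, and let $T \in (0,1]$ be a parameter to be chosen. Consider the complete metric space
\begin{equation*}
    X_T := \{(\psi, \psi_t) \in C([T_1, T_1+T]; H^1 \times L^2(\mathbb{S}^1; T\mathbb{S}^k)) \,:\, \|(\psi,\psi_t)\|_{L^\infty_t(\dot H^1\times L^2)} \leq R'\},
\end{equation*}
equipped with the $C([T_1,T_1+T]; H^1\times L^2)$-norm. Given $(\psi,\psi_t) \in X_T$, set $f(t,x) := F(t,\psi(t,\cdot),\psi_t(t,\cdot))(x)$; by ($\mathcal{P}1$)--($\mathcal{P}3$) together with the continuity of $t\mapsto (\psi,\psi_t)(t,\cdot)$ this defines $f \in L^2_{t,x}([T_1,T_1+T]\times \mathbb{S}^1)$ with $\|f\|_{L^2_{t,x}} \leq T^{1/2} C_B(R')$. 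Define $\Phi(\psi,\psi_t) := (\phi, \phi_t)$, where $(\phi,\phi_t)$ is the unique solution of the inhomogeneous wave maps equation \eqref{eq:inhomowavemaps} with source $f$ and initial state $(g_0,g_1)$ at time $T_1$ provided by Lemma \ref{lem:inhwm}.

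\textbf{Step 2 (Stability and contraction).} From Lemma \ref{lem:inhwm} one gets
\begin{equation*}
    \|\Phi(\psi,\psi_t)\|_{L^\infty_t(\dot H^1 \times L^2)} \leq C_w\bigl(R + T^{1/2} C_B(R')\bigr),
\end{equation*}
so choosing $T$ small enough (depending only on $R$ through $C_B(R')$) ensures $\Phi$ maps $X_T$ into itself. For two elements $(\psi_1,\psi_{1t}), (\psi_2,\psi_{2t}) \in X_T$, assumption ($\mathcal{P}4$) gives
\begin{equation*}
    \|F(\cdot,\psi_1,\psi_{1t}) - F(\cdot,\psi_2,\psi_{2t})\|_{L^2_{t,x}} \leq T^{1/2} K(R')\, \|(\psi_1,\psi_{1t})-(\psi_2,\psi_{2t})\|_{C_t(H^1\times L^2)},
\end{equation*}
and then Lemma \ref{lem-conti-dep-inh}(i) applied to the images (whose data are identical) yields
\begin{equation*}
    \|\Phi(\psi_1,\psi_{1t}) - \Phi(\psi_2,\psi_{2t})\|_{C_t(H^1\times L^2)} \leq C T^{1/2} K(R')\, \|(\psi_1,\psi_{1t})-(\psi_2,\psi_{2t})\|_{C_t(H^1\times L^2)}.
\end{equation*}
Shrinking $T$ further, $\Phi$ becomes a strict contraction and therefore has a unique fixed point, which by construction solves \eqref{eq:cauchyclosedloop} in the sense of Definition \ref{def:sol:clos}. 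This defines $T(R) > 0$.

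\textbf{Step 3 (Continuous dependence and uniqueness).} For two initial data $(g_0,g_1)$ and $(\tilde g_0,\tilde g_1)$ of $\dot H^1\times L^2$-norm $\leq R$, the same estimates applied to the corresponding fixed points $(\phi,\phi_t)$ and $(\tilde\phi,\tilde\phi_t)$ give, after absorbing the contraction factor,
\begin{equation*}
    \|(\phi,\phi_t)-(\tilde\phi,\tilde\phi_t)\|_{C_t(H^1\times L^2)} \leq L(R) \, \|(g_0,g_1)-(\tilde g_0,\tilde g_1)\|_{H^1\times L^2},
\end{equation*}
which is (i). Uniqueness on $[T_1,T_1+T(R)]$ also follows, since any other solution $(\tilde\phi,\tilde\phi_t)$ is a fixed point of $\Phi$ in a possibly larger ball, and the same contraction argument (now with $R'$ replaced by the bound given by Lemma \ref{lem:inhwm} applied to the worst of the two solutions) forces coincidence on a small subinterval; a standard continuation-in-time argument then propagates uniqueness to the whole of $[T_1, T_1+T(R)]$.

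\textbf{Step 4 (Maximal solution and blow-up alternative).} For (ii), define
\begin{equation*}
    \tau := \sup\{\sigma > T_1 \,:\, \text{there exists a solution on } [T_1,\sigma]\},
\end{equation*}
and glue together the solutions on $[T_1,\sigma]$ using the uniqueness from Step 3 to produce a maximal solution $(\phi,\phi_t)$ on $[T_1,\tau)$. Suppose $\tau < +\infty$ but $\limsup_{t\to\tau^-} \|(\phi,\phi_t)(t,\cdot)\|_{H^1\times L^2} < +\infty$. Since $\phi(t,x) \in \mathbb{S}^k$, we have $\|\phi(t,\cdot)\|_{L^2} = \sqrt{2\pi}$, so this is equivalent to boundedness in $\dot H^1\times L^2$ by some $R_0$. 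Pick $t_0 \in [T_1,\tau)$ with $\tau - t_0 < T(R_0)/2$; applying part (i) with initial time $t_0$ and initial state $(\phi,\phi_t)(t_0,\cdot)$ gives a solution on $[t_0, t_0 + T(R_0)]$, which by uniqueness extends $(\phi,\phi_t)$ strictly beyond $\tau$, contradicting maximality.

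\textbf{Main obstacle.} The only nontrivial point is Step 2: one must verify that the map $t\mapsto F(t,\psi(t,\cdot),\psi_t(t,\cdot))$ is measurable into $L^2(\mathbb{S}^1; \mathbb{R}^{k+1})$ so that the inhomogeneous theory applies. This is where the Carath\'eodory nature of $F$ (conditions ($\mathcal{P}2$)--($\mathcal{P}3$)) enters in an essential way, via a standard approximation of the continuous path $t\mapsto (\psi,\psi_t)(t,\cdot)$ by simple functions and ($\mathcal{P}1$) to control the limit. Beyond this measurability check, everything reduces to Lemmas \ref{lem:inhwm} and \ref{lem-conti-dep-inh} combined with ($\mathcal{P}1$) and ($\mathcal{P}4$).
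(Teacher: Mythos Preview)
Your proposal is correct and follows essentially the same approach as the paper's proof: a Banach fixed-point argument in which the solution map is defined via the inhomogeneous wave maps equation (Lemma \ref{lem:inhwm}), with the contraction coming from the continuous-dependence estimate (Lemma \ref{lem-conti-dep-inh}(i)) combined with ($\mathcal{P}4$), and part (ii) deduced from (i) by the standard continuation/blow-up dichotomy. The only cosmetic difference is that the paper obtains the a priori bound on the fixed-point ball via the energy identity $\dot b \leq C_B(b)$ for $b=E^{1/2}$ (yielding the explicit $T_0(R)=R/(C_B(2R)+1)$ and radius $2R$), whereas you use the estimate from Lemma \ref{lem:inhwm} directly with radius $R'=2C_wR$; both choices work.
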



\section{$2\pi$-energy threshold for uniform asymptotic stabilization}\label{sec:stabobs}
As stated in the introduction, in this section we explore global stabilization of the closed-loop system \eqref{closed-loop-timevarying}. Instead of the classical damping feedback one is allowed to use any other  time-varying continuous feedback law \eqref{feedbacklaw-def}. An intuitive idea is to construct a feedback law such that a given harmonic map, denoted by $(\mathcal{Q}, 0)$, is no longer a stationary state and hope that the energy dissipates around this state. This may bypass the obstruction caused by $(\mathcal{Q}, 0)$.

\subsection{Damping and harmonic maps. }

Damping stabilization, a standard tool for stabilization, has been extensively studied in the literature for both linear and nonlinear dispersive equations including the wave equations, Schr\"{o}dinger equations, KdV among others. In this special circumstance, the feedback law is chosen as
\begin{equation*}
F(t; \phi[t]):= a(x) \phi_t(t, x) \; \textrm{ with } a(x)\geq 0, \; \textrm{ supp }a \subset  \omega,\; a \not =0.
\end{equation*}
Typically,  it relies on the so-called compactness-uniqueness method and provides nearly optimal theoretical results, while the drawback of this method is usually the lack of a quantitative description. Recently, the first two authors have studied the quantitative control of some basic dispersive models, namely KdV \cite{Krieger-Xiang-kdv} and the radial focusing Klein--Gordon equation \cite{krieger2020boundary}, and have provided a more quantitative description. Another important property of damping stabilization is that it sometimes leads to global stabilization results, notably for defocusing dispersive equations like nonlinear wave equations \cite{Laurent-2011} among others, in contrast to other stabilization techniques usually limited to local results such as backstepping \cite{Gagnon-Hayat-Xiang-Zhang}  and spectral type methods \cite{MR1745475, Xiang-heat-2020}.   In \cite{Krieger-Xiang-2022}, the last two authors found that the damping stabilization technique leads to  semi-global exponential  stabilization of the wave maps equation. As explained in the introduction, the energy restriction comes from harmonic maps and cannot be improved.

\subsection{Topology and obstruction to stabilization.}\label{sec-highlevel}

\begin{proof}[Proof of Theorem~\ref{THM-nounifdecay}]
We argue by contradiction. We assume the existence of a
 time-varying feedback law (which is not necessarily linear) $F$ satisfying conditions $(\mathcal{P}1)-(\mathcal{P}4)$,
    \begin{equation*}
      F: \mathbb{R}\times H^1_xL^2_x(\mathbb{S}^1;T\mathbb{S}^k)\rightarrow  L^2_x(\mathbb{S}^1),
    \end{equation*}
such that     the closed-loop system \eqref{closed-loop-timevarying} is uniformly asymptotically stable in
    $\textbf{H}(2\pi)$. Let us define the flow for  solutions of the closed-loop wave maps system with initial time equal to 0:
\begin{equation*}
\begin{array}{ccc}
    \Phi= (\Phi_1, \Phi_2): \R \times H^1_xL^2_x(\Sph^1;T\Sph^k)&\rightarrow& H^1_xL^2_x(\Sph^1;T\Sph^k)\\
    (t, u[0])&\mapsto& \phi[t],
\end{array}
\end{equation*}
where $\phi[t]$ is the unique solution of
\begin{gather*}
 \Box \phi= \left(|\phi_{t}|^2- |\phi_{ x}|^2\right) \phi+
     F(t; \phi[t])^{\phi^{\perp}}, \;  (\phi, \phi_t)(0)= u[0].
     \end{gather*}
Indeed, the existence of such a unique solution is given by Proposition \ref{lem:wellclosegene}. 
By Definition~\ref{DEF-usstab} and the asymptotic stability assumption, there exists a $\mathcal{KL}$ function $h$  such that
\begin{multline}\label{estEphit-2pi}
    E(\Phi(t,(\phi,0)))\leq h\left(\int_{\mathbb{S}^1} |\phi_x|^2 dx,t\right),
     \\  \forall t\in (0, +\infty),\;   \forall \phi \in H^1(\mathbb{S}^1;\mathbb{S}^k) \text{ such that } \int_{\mathbb{S}^1} |\phi_x|^2 dx\leq 2\pi.
\end{multline}
In particular, for every $\delta>0$ there exists $T= T(\delta)>0$ such that $\forall \phi \in H^1(\mathbb{S}^1;\mathbb{S}^k)$  \text{ satisfying } 
\begin{equation*}
    \int_{\mathbb{S}^1} |\phi_x|^2 dx\leq 2\pi,
\end{equation*}
 there is
\begin{equation}
\label{phi1<2}
   |\Phi_1(T, (\phi, 0))(x_1)- \Phi_1(T, (\phi, 0))(x_2)|<\delta,\;  \forall x_1, x_2\in \mathbb{S}^1.
\end{equation}

\noindent $\bullet$ {\bf Case $k=1$.}   Let $\phi \in H^1(\mathbb{S}^1;\mathbb{S}^1)$. Observe that, for any given time $t$,  $\Phi_1(t, (\phi, 0))$ is in $C^0(\mathbb{S}^1;\mathbb{S}^1)$ and hence has a degree. Since $t\mapsto \Phi_1(t,(\phi, 0))\in C^0(\mathbb{S}^1;\mathbb{S}^1)$ is continuous, this degree does not depend on time. Note that \eqref{phi1<2} implies that (see the case $k=2$ for more details)
\begin{equation}
\label{k=1degree0}
    \deg(x\in \Sph^1\mapsto \Phi_1(T(\delta=1),(\phi, 0))(x) \in \Sph^1)= 0.
\end{equation}
We take
\begin{equation}
\label{defphix=x}
\phi(x):=x.
\end{equation}
Then
\begin{equation*}
 \int_{\mathbb{S}^1}|\phi_x|^2 dx= 2\pi \text{ and }
\deg(x\in \Sph^1\mapsto \Phi_1(0,(\phi, 0))(x) \in \Sph^1)= \deg(\phi)= 1,
\end{equation*}
which leads to a contradiction due to \eqref{k=1degree0} and concludes the proof of Theorem~\ref{THM-nounifdecay} in the case $k=1$.
\begin{remark}
The above proof to the obstruction of asymptotic stabilizability comes in fact from the following obstruction to controllability: it is not possible to move from $(\phi,0)$, with $\phi$ defined in \eqref{defphix=x},  to $(u_0,u_1)\in H^1_xL^2_x(\Sph^1;T\Sph^1)$ if the $\deg(u_0)\not =1$.
\end{remark}

\noindent $\bullet$ {\bf Case $k= 2$.} In this case the system is globally controllable thanks to Theorem \ref{THM-globalexact}.
Thus the topological properties of $\mathbb{S}^2$ cannot provide any obstruction to controllability. However, the topological aspect for asymptotic stabilization is  different from the one of controllability: there is a topological property of $\Sph^2$ (namely the non triviality of the homology group $H^2(\Sph^2)$) which leads to an obstruction to uniform asymptotic stabilization as we are going to see now.

In \cite{Krieger-Xiang-2022} it is shown that $2\pi$ is an energy threshold for damping stabilization. Let us show that $2\pi$ is exactly the energy threshold for uniform asymptotic stabilization whatever the feedback used. Here we have chosen to work with the $\Sph^2$-target to simplify the notations, leaving the situation of higher dimension targets $\Sph^k$ to the next case.  Let $A: \mathbb{S}^1_s \times \mathbb{S}^1_x \rightarrow \mathbb{S}^2$ be defined by
\begin{equation}\label{def_gs2}
    A(s, x):=
    \begin{cases}
        (\sin{s} \cos{x}, \sin{s} \sin{x}, \cos{s})^T, \; \forall s\in [0, \pi],
\\
(-\sin{s} \cos{x}, \sin{s} \sin{x}, \cos{s})^T, \; \forall s\in (\pi, 2\pi).
    \end{cases}
\end{equation}
In \eqref{def_gs2} and until the end of Section~\ref{sec-highlevel} we identify $\Sph^1$ with $\R/2\pi\mathbb{Z}$.   This $A$, as one can easily check, leads to a
 $\gamma \in C^0(\Sph^1_s; C^{1}(\mathbb{S}^1_x; \mathbb{S}^2))$ if one requires that
\begin{equation}
\gamma(s)(x):=A(s,x).
\end{equation}
A key observation is that the degree of $A$ is not $0$. More precisely, one has the following lemma.
\begin{lem}\label{lemma-degreeA=2}
The degree of $A$ is $2$.
\end{lem}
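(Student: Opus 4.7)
My plan is to compute $\deg(A)$ directly from the definition via the Kronecker--Brouwer integral formula, exploiting the fact that $A$ splits naturally into two pieces over $[0,\pi]$ and $[\pi,2\pi]$, each of which is a parametrization of the target sphere. Specifically, I would use the formula
\begin{equation*}
\deg(A) = \frac{1}{4\pi}\int_0^{2\pi}\!\int_0^{2\pi} A(s,x)\cdot\bigl(A_s(s,x)\times A_x(s,x)\bigr)\,ds\,dx,
\end{equation*}
valid once one has checked that $A$ is continuous on $\Sph^1_s\times\Sph^1_x$ and smooth on each open strip; continuity at $s=0,\pi,2\pi$ is immediate since at these values $\sin s=0$, so the sign flip in the first coordinate is harmless and both formulas collapse to $(0,0,\pm1)$.

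For the first piece, $s\in[0,\pi]$, the map $A$ is the classical spherical-coordinate parametrization; a direct calculation gives $A_s\times A_x=\sin s\,A$, hence $A\cdot(A_s\times A_x)=\sin s$, and the integral over $[0,\pi]\times[0,2\pi]$ yields $4\pi$. For the second piece, $s\in[\pi,2\pi]$, I would again compute the three components of $A_s\times A_x$ directly from
\begin{equation*}
A_s=(-\cos s\cos x,\cos s\sin x,-\sin s),\qquad A_x=(\sin s\sin x,\sin s\cos x,0),
\end{equation*}
and check that $A\cdot(A_s\times A_x)=-\sin s$; since $\sin s\le 0$ on $[\pi,2\pi]$, the integrand is nonnegative and integrates again to $4\pi$. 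Summing and dividing by $4\pi$ gives $\deg(A)=2$.

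As a sanity check (and an alternative proof I could record instead), I would count preimages of the regular value $p=(1,0,0)$ with signs. The equation $A(s,x)=p$ forces $\cos s=0$, i.e.\ $s=\pi/2$ or $s=3\pi/2$, and in each case $x=0$. Evaluating $A_s\times A_x$ at $(\pi/2,0)$ gives $(1,0,0)$, and at $(3\pi/2,0)$ it also gives $(1,0,0)$, so both preimages have the same orientation as the outward normal to $\Sph^2$ at $p$, contributing $+1$ each; hence $\deg(A)=2$.

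The calculation is routine, so there is no genuine obstacle; the only subtle point is to guard against a sign error on the lower hemisphere that would make the two pieces cancel. The sign flip $\sin s\mapsto -\sin s$ (from the sign change in the first coordinate) and the sign flip coming from $\sin s<0$ on $[\pi,2\pi]$ conspire to make both contributions $+4\pi$, which is exactly why the author inserted the sign change in \eqref{def_gs2}: it turns what would otherwise be a degree-$0$ map into a degree-$2$ map, aligning with the fact that $A$ sweeps each hemisphere twice in a coherent way.
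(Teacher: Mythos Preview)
Your proof is correct. Your sanity-check argument---computing $A^{-1}(p)$ for the regular value $p=(1,0,0)$ and verifying that both preimages $(\pi/2,0)$ and $(3\pi/2,0)$ are positively oriented---is exactly the paper's proof, except that the paper packages the orientation check by computing $\det J_A$ with $J_A=(\partial_s A,\partial_x A,A)$ rather than $A_s\times A_x$; these are equivalent since $\det(\partial_s A,\partial_x A,A)=A\cdot(A_s\times A_x)$.

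Your primary argument via the Kronecker--Brouwer integral is a genuinely different route. It has the advantage of being manifestly global (no need to locate preimages or verify regularity of a particular value), and your observation that $A_s\times A_x=\pm(\sin s)\,A$ on each strip makes the integrand transparent and explains structurally why the sign flip in the first coordinate was inserted. The paper's regular-value count is slightly shorter and more elementary, but both are perfectly adequate for this routine computation.
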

\begin{proof}
Observe that $p:= (1, 0, 0)$ is a regular point of the map $A$ such that
\begin{equation*}
A^{-1}(p)= \{(\pi/2, 0), (3\pi/2, 0)\}.
\end{equation*}

Around the point  $(\pi/2, 0)$, more precisely for the region $s\in (0, \pi)$, the Jacobi matrix of A is given by
\begin{equation*}
    J_A:= (\partial_s A, \partial_x A, A)= \begin{pmatrix}
    \cos{s} \cos{x}& -\sin{s} \sin{x}  & \sin{s} \cos{x} \\
    \cos{s} \sin{x}& \sin{s} \cos{x} & \sin{s} \sin{x}\\
    -\sin{s} & 0 & \cos{s}
    \end{pmatrix},
\end{equation*}
which leads to
\begin{equation*}
    \text{det} J_A= \sin{s} >0.
\end{equation*}

\noindent Around the point  $(3\pi/2, 0)$, more precisely for the region $s\in (\pi, 2\pi)$, the Jacobi matrix of A is given by
\begin{equation*}
    J_A:= (\partial_s A, \partial_x A, A)= \begin{pmatrix}
    -\cos{s} \cos{x}& \sin{s} \sin{x}  & -\sin{s} \cos{x} \\
    \cos{s} \sin{x}& \sin{s} \cos{x} & \sin{s} \sin{x}\\
    -\sin{s} & 0 & \cos{s}
    \end{pmatrix},
\end{equation*}
which leads to
\begin{equation*}
    \text{det} J_A= -\sin{s} >0.
\end{equation*}

\noindent Therefore, the degree of the map $A$ is
\begin{equation*}
    \deg(A)= 1+1= 2.
\end{equation*}

\end{proof}

Note that
\begin{equation*}
    E((\gamma(s),0))= \int_{\Sph^1} (\sin{s})^2 dx= 2\pi (\sin{s})^2\leq 2\pi, \; \forall s \in \Sph^1.
\end{equation*}
Moreover, thanks to \eqref{phi1<2}, by taking $T= T(\delta= 2)$ one has
\begin{equation}\label{Phi1closetoa}
    |\Phi_1(T,(\gamma(s), 0))(x)- a(s)|<2,\; \forall s \in \Sph^1,\; \forall x \in \Sph^1,
\end{equation}
with
\begin{equation}
\label{def-a}
a(s):= \Phi_1(T,(\gamma(s), 0))(0)\in \Sph^2.
\end{equation}
Note that, by Lemma~\ref{lem-conti-dep-inh}  and \eqref{def-a}, $a\in C^0(\mathbb{S}^1;\mathbb{S}^2)$. 
Indeed, thanks to Proposition \ref{lem:wellclosegene}, there exists some $L>0$ such that for every $s_1, s_2$ belong to $\mathbb{S}^1$, and for every $t\in [0, T]$, we have 
\begin{align*}
    \|\Phi_1(t, (\gamma(s_1), 0))(\cdot)- \Phi_1(t, (\gamma(s_2), 0))(\cdot)\|_{H^1}&\leq L \|(\gamma(s_1), 0)(\cdot)-  (\gamma(s_2), 0)(\cdot)\|_{H^1\times L^2}, \\
    &\leq C L \|\gamma(s_1)(\cdot)-  \gamma(s_2)(\cdot)\|_{C^1}.
\end{align*}
This implies that 
\begin{equation*}
    \Phi_1(t, (\gamma(s), 0))(x)\in C^0\left(S^1_{s}; C^0([0, T]; H^1(\mathbb{S}^1_x; \mathbb{S}^2))\right)\hookrightarrow C^0\left(S^1_{s}; C^0([0, T]; C^0(\mathbb{S}^1_x; \mathbb{S}^2))\right)
\end{equation*}

Now we are able to define a continuous map
\begin{equation}
\begin{array}{ccc}
    \mathcal{H}_1: [0, T]\times \Sph^1_s\times \Sph^1_x&\rightarrow& \Sph^2
\\
    (t; s, x)&\mapsto& \Phi_1(t, (\gamma(s), 0))(x)
\end{array}
\end{equation}
Extend $ \mathcal{H}_1$ to $t\in [T, T+1]$ as
\begin{equation}\label{defH1}
     \mathcal{H}_1(T+ r; s, x):= \frac{(1-r)  \mathcal{H}_1(T; s, x)+ r a(s)}{|(1-r)  \mathcal{H}_1(T; s, x)+ r a(s)|}\in \mathbb{S}^2.
\end{equation}
Note that the denominator appearing in \eqref{defH1} never vanishes as one can see by using \eqref{Phi1closetoa} and the fact that both $\mathcal{H}_1(T; s, x)$ and $a(s)$ are in $\mathbb{S}^2$.
Clearly, $\mathcal{H}_1: [0, T+1]\times \Sph^1_s\times \Sph^1_x\rightarrow \Sph^2$ is a continuous map which shows
that  $ \mathcal{H}_1(t=0): \Sph^1_s\times \Sph^2_x\rightarrow \Sph^2$ is homotopic to
$ \mathcal{H}_1(t=T+1): \Sph^1_s\times \Sph^1_x\rightarrow \Sph^2$. In particular
\begin{equation}
\label{degH0T}
\deg ( (s,x)\in  \Sph^1_s\times \Sph^1_x\mapsto \mathcal{H}_1(0;s,x) \in \Sph^2)= \deg ( (s,x)\in  \Sph^1_s\times \Sph^1_x\mapsto \mathcal{H}_1(T+1;s,x) \in \Sph^2).
\end{equation}

On the one hand, by Lemma~\ref{lemma-degreeA=2}, we know that
\begin{equation}
\label{degH0A}
\deg ( (s,x)\in  \Sph^1_s\times \Sph^1_x\mapsto \mathcal{H}_1(0;s,x) \in \Sph^2)= \deg A = 2.
\end{equation}
On the other hand,
\begin{equation}
     \mathcal{H}_1(t= T+1)= B: (s, x)\mapsto a(s).     \notag
\end{equation}
Since the value of $B(s, x)= a(s)$ only depends on one variable, its degree is 0.
This is in contradiction with the fact that, by \eqref{degH0T} and  \eqref{degH0A},
\begin{equation*}
    2=\deg A= \deg B.
\end{equation*}
This concludes the proof of Theorem~\ref{THM-nounifdecay} in the case $k=2$.

\noindent $\bullet$ {\bf Case $k\geq 3$.} Let us start with the case $k=3$. One defines $A_2:\Sph^1\times\Sph^1\times\Sph^1\rightarrow \Sph^3$  and $\gamma_2\in C^0(\Sph^1\times\Sph^1 ;H^1(\Sph^1;\Sph^3))$ by
\begin{equation}\label{def_gs3}
    A_2(s_1,s_2, x):=
    \begin{cases}
        (\sin{s_1}A(s_2,x)^T , \cos{s_1})^T, \; \forall s_1\in [0, \pi],
\\
(-\sin{s_1} A(s_2,x)^T, \cos{s_1})^T, \; \forall s_1\in (\pi, 2\pi),
    \end{cases}
\end{equation}
and
\begin{equation}
\gamma_2(s_1,s_2)(x):=A_2(s_1,s_2,x).
\end{equation}
We have
\begin{gather*}
    E((\gamma_2(s_1,s_2),0))= 2\pi\sin{s_1}^2\sin{s_2}^2 \leq 2\pi,
     \deg(A_2)= 4\not = 0.
\end{gather*}
As in the case $k=2$, this gives the proof of Theorem~\ref{THM-nounifdecay} for $k=3$. For $k>3$ one defines by induction on $k$ $A_k:(\Sph^1)^{k+1}\rightarrow \Sph^{k+1}$
\begin{equation}\label{def_gsk}
    A_{k}(s_1,s_2,\ldots,s_k, x):=
    \begin{cases}
        (\sin{s_1}A_{k-1}(s_2,\ldots,s_k,x)^T , \cos{s_1})^T, \; \forall s_1\in [0, \pi],
\\
(-\sin{s_1} A_{k-1}(s_2,\ldots,s_k,x)^T, \cos{s_1})^T, \; \forall s_1\in (\pi, 2\pi).
    \end{cases}
\end{equation}
and define and $\gamma_k\in C^0((\Sph^1)^k ; H^1(\Sph^1;\Sph^{k+1}))$ by
\begin{equation}
\gamma(s)(x):=A(s,x).
\end{equation}
We have
\begin{gather*}
    E((\gamma_k(s_1,\ldots,s_k),0))= 2\pi\sin{s_1}^2\ldots \sin{s_k}^2 \leq 2\pi
    \text{ and }\deg(A_k)= 2^k\not = 0,
\end{gather*}
which, again, leads to a contradiction.

\end{proof}

\begin{remark}
\label{asto-a-given-constant}
We may  consider the stronger ``state uniform asymptotic stabilization" property for which   the solution converges to some given stationary state $(p, 0)$, with $p\in \Sph^k$. Then \eqref{estEphit} is  replaced by
\begin{equation}\label{decay-phi}
    \|\phi[t]- (p, 0)\|_{H^1_x\times L^2_x}\leq h(\|\phi[0]- (p, 0)\|_{H^1_x\times L^2_x},t), \; \forall t\in (0, +\infty), \, \forall \phi[0] \in \textbf{H}(e).
\end{equation}

Obviously, this type of  stabilization  is stronger than the former one given in Definition \ref{DEF-usstab}.
If the energy is sufficiently small, then there exists some point $p\in \mathbb{S}^k$ such that the state is close to $(p, 0)$ in $H^1_x\times L^2_x$.  The difference  between these two types of stabilization comes  from  the fact that, for the former stabilization,  solutions with different initial states can converge to different  points. It turns out that there is an obstruction to state uniform asymptotic stabilization already with $e=0$. It suffices for that to consider $\gamma :\Sph^k_s\rightarrow H^1(\Sph^1_x;\Sph^k)$ defined by
\begin{equation}
\gamma(s)(x)=s, \forall s\in \Sph^k, \forall x\in \Sph^1.
\end{equation}
Then
\begin{gather}
    E((\gamma(s),0))= 0,
\\
\label{degree1}\deg(s\in \Sph^k\mapsto\gamma(s)(0)\in \Sph^k)= 1.
\end{gather}
Moreover, from \eqref{decay-phi} one has, for $T>0$ large enough,
\begin{gather}
\label{degree0}
   \deg(s\in \Sph^k\mapsto \Phi_1(T,(\gamma(s),0))(0)\in \Sph^k)= 0.
\end{gather}
However, since 
\begin{align*}
    [0, T]\times \Sph^k_s&\rightarrow \Sph^k,\\
   (t, s)&\mapsto \Phi_1(t, (\gamma(s), 0))(0)
\end{align*}
is continuous, there is
\begin{multline}
  \deg(s\in \Sph^k\mapsto\gamma(s)(0)\in \Sph^k)=\deg(s\in \Sph^k\mapsto \Phi_1(0,(\gamma(s),0))(0)\in \Sph^k)
   \\ =\deg(s\in \Sph^k\mapsto \Phi_1(T,(\gamma(s),0))(0)\in \Sph^k),
\end{multline}
which leads to a contradiction with \eqref{degree1} and \eqref{degree0}.
\end{remark}

\section{Global  controllability of the wave maps equation for $\Sph^k$-target, $k\geq 2$}\label{sec-glo-cont}


\subsection{The three-step strategy for controllability to constant states}
By constant states we mean states $(\phi_1,\phi_2)\in H^1_xL^2_x(\Sph^1;T\Sph^k)$ such that $\phi_2=0$ and there exists $p\in \Sph^k$ such that $\phi_1(x)=p$. Due to the time-reversal property of the inhomogeneous wave maps equation it suffices to establish the {\it exact controllability to a given constant state} to prove Theorem \ref{THM-globalexact}, namely, for any given state and given constant state, prove the existence of a control that steers in finite time the control system from this given state to the given constant state. For this purpose we perform a three-step strategy:
\begin{itemize}
    \item  Firstly, we show that the damping effect dissipates the system's energy such that the unique solution converges asymptotically to harmonic maps, namely its energy must converge to some critical value $2\pi N^2$. This generalizes Proposition \ref{thm:semicontrol} $(ii)$ for which the energy is strictly limited from above by $2\pi$.  See Theorem \ref{thm-converharm} for details.
    \item Secondly, we continue to decrease the system's energy by designing explicit controls  such that it becomes strictly smaller than the preceding critical value $2\pi N^2$. Subsequently, we again perform the damping technique to make the energy converge to $2\pi N_1^2$ with $N_1< N$, and design precise controls to decrease it below this value.  We iterate these two steps until the energy of the solution is smaller than $2\pi$.  See Theorem \ref{thm-decreseenergy} for details.
    \item Lastly, we use the low-energy exact controllability result Proposition \ref{thm:semicontrol} $(ii)$ to conclude the proof of exact controllability to the given constant state.
\end{itemize}
To be more precise,  the following two theorems exactly correspond to the above-illustrated strategy.  To better formulate the results,  we  introduce the so-called ``$\varepsilon$-approximate harmonic maps":
\begin{defn}
Let $0<\varepsilon<1$. We call ``$\varepsilon$-approximate harmonic maps" the states $(u, u_t)(x)\in T\Sph^k$ that belong to the set
\begin{equation}
    \mathcal{Q}_{\varepsilon}:= \bigcup\limits_{\bar \phi(x): \textrm{ a harmonic map }} \left\{(u, u_t): \|(u, u_t)- (\bar \phi, 0)\|_{H^1_x\times L^2_x}\leq \varepsilon\right\}.
\end{equation}
\end{defn}

The first result shows that for any given $\varepsilon$ and any given initial state, the unique solution of the damped wave maps equation  becomes a ``$\varepsilon$-approximate harmonic map" after a long time evolution.
\begin{thm}\label{thm-converharm}
Let $\varepsilon>0$ and $M>0$. There exists some effectively computable $T_{\varepsilon, M}>0$ such that for any initial state  $u[0]\in H^1_xL^2_x(\mathbb{S}^1;T\mathbb{S}^k)$ in $\textbf{H}(M)$,
there exists some time $t_0\in [0,  T_{\varepsilon, M}]$ such that
the unique solution of the damped wave maps equation enters the ``$\varepsilon$-approximate harmonic maps" region:
\begin{equation}
    \phi[t_0]\in \mathcal{Q}_{\varepsilon} \cap \textbf{H}(M). \notag
\end{equation}
\end{thm}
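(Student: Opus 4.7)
The plan is a compactness-and-contradiction argument driven by the energy dissipation identity, whose only quantitative input is that identity itself; the rest is structural, relying on the unique continuation property of the one-dimensional wave equation on $\mathbb{S}^1$ with observation on $\omega$.

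First I integrate the identity $\frac{d}{dt}E(t)=-2\int_{\mathbb{S}^1}a|\phi_t|^2\,dx$ to get
\begin{equation*}
2\int_0^{T}\int_{\mathbb{S}^1}a(x)|\phi_t|^2(t,x)\,dx\,dt \;=\; E(0)-E(T)\;\leq\; M.
\end{equation*}
Fix a length $L$ larger than the (classical) observability time $2\pi-|\omega|$ for the free wave equation on $\mathbb{S}^1$ observed on $\omega$. For any prescribed $\delta>0$, a pigeonhole argument on the $\lfloor T_{\varepsilon,M}/L\rfloor$ disjoint $L$-subintervals of $[0,T_{\varepsilon,M}]$ produces a time $t_*\in[0,T_{\varepsilon,M}]$ with $\int_{t_*}^{t_*+L}\!\!\int a|\phi_t|^2\,dx\,dt\leq\delta$, provided $T_{\varepsilon,M}\geq ML/(2\delta)$.

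To conclude, I argue by contradiction. Suppose the theorem fails for some $\varepsilon,M$: then there are sequences $u_n[0]\in\mathbf{H}(M)$, solutions $\phi_n$ of \eqref{eq:dwm}, and times $t_n$ such that $\int_{t_n}^{t_n+L}\int a|\phi_{n,t}|^2\,dx\,dt\to 0$ while $\phi_n[t]\notin\mathcal{Q}_\varepsilon$ for every $t\in[0,t_n+L]$. The translated data $\phi_n[t_n]$ are bounded in $H^1\times L^2$, and by Rellich compactness I extract a subsequence converging weakly in $H^1\times L^2$ and strongly in $L^2\times H^{-1}$ to some $(\bar g_0,\bar g_1)\in\mathbf{H}(M)$. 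The continuous dependence provided by Lemma~\ref{lem-conti-dep-inh}, combined with the subcritical 1D character of the equation (so that the one-sided derivatives $\phi_u,\phi_v$ can be controlled and passage to the limit in the nonlinearity $(|\phi_t|^2-|\phi_x|^2)\phi$ is justified via $H^1\hookrightarrow C^0$ on $\mathbb{S}^1$), shows that the corresponding solutions converge to the unique solution $\phi_\infty$ of \eqref{eq:dwm} on $[0,L]$ with initial datum $(\bar g_0,\bar g_1)$. Lower semicontinuity of $\int_0^L\!\!\int a|(\cdot)_t|^2$ then forces $\phi_{\infty,t}\equiv 0$ on $\omega\times[0,L]$.

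It remains to deduce that $\phi_\infty$ is stationary. Since $\phi_{\infty,t}=0$ on $\omega\times[0,L]$, the damping term disappears there and the equation restricts to the harmonic-map ODE $\phi_{\infty,xx}=-|\phi_{\infty,x}|^2\phi_\infty$, so in particular $\phi_\infty(t,\cdot)|_\omega$ is independent of $t$. Differentiating the damped wave maps equation in $t$, the function $v:=\phi_{\infty,t}$ solves a linear wave equation
\begin{equation*}
\Box v \;=\; 2(\phi_{\infty,t}\!\cdot v_t-\phi_{\infty,x}\!\cdot v_x)\phi_\infty+(|\phi_{\infty,t}|^2-|\phi_{\infty,x}|^2)v+a(x)v_t
\end{equation*}
with $v=0$ on $\omega\times[0,L]$. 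Since $L>2\pi-|\omega|$, observability for the wave equation on $\mathbb{S}^1$ (applied to $v$, using that its coefficients are controlled by the solution norms established by Lemma~\ref{lem:inhwm}) yields $v\equiv 0$ on $\mathbb{S}^1\times[0,L']$ for some $L'>0$. Hence $\phi_\infty(t,\cdot)$ is a harmonic map for every such $t$, and the strong $H^1\times L^2$ convergence of $\phi_n[t_n]$ to $(\bar g_0,0)$ places $\phi_n[t_n]\in\mathcal{Q}_\varepsilon$ for $n$ large, contradicting the assumption.

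The main obstacle is the last step: unique continuation/observability for a linear wave equation whose coefficients are only in energy spaces. Classical Carleman estimates require more regularity than $H^1\times L^2$ data afford, so I expect to exploit the special 1D structure instead — either by working with the left- and right-moving Riemann invariants $\phi_u,\phi_v$, whose characteristics are straight lines of speeds $\pm 1$ and thus sweep across $\mathbb{S}^1\setminus\omega$ in time $2\pi-|\omega|$, or by repeating the observability argument already developed in \cite{Krieger-Xiang-2022} for the low-energy regime and checking that the only place where energy was used is to ensure the solution stays bounded, which is here provided by the $M$-bound. A secondary but less serious point is ensuring that the weak-to-strong upgrade in the compactness step is genuinely uniform in $n$; in 1D this is handled by Sobolev embedding and the energy estimate in Lemma~\ref{lem:inhwm}.
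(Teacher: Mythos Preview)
Your approach via compactness--uniqueness is genuinely different from the paper's, but it has two substantive gaps.

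First, the theorem asserts an \emph{effectively computable} $T_{\varepsilon,M}$, and a contradiction argument through weak limits cannot produce one: once you pass to a subsequence the dependence on $(\varepsilon,M)$ is lost. The paper works instead through a quantitative dichotomy (Proposition~\ref{prop:23}): on each window of length $32\pi$, either $\int\!\!\int a|\phi_t|^2\geq C_0\varepsilon^{4p}$, or some $\phi[\bar t]\in\mathcal{Q}_\varepsilon$. The second alternative is obtained constructively. From smallness of the observed damping, Lemma~\ref{lem:2} (the observability machinery of \cite{Krieger-Xiang-2022} at general energy $M$ --- exactly the reuse you propose in your last paragraph) yields $\|\phi_t\|_{L^\infty_x L^2_t}\lesssim\delta^{1/2p}$; the time average $\tilde\phi(x)=\int\phi(t,x)\psi(t)\,dt$ then satisfies $\tilde\phi_{xx}+\tfrac{E(0)}{2\pi}\tilde\phi=O(\delta^{1/2p})$, and a Fourier-series argument forces $E(0)$ near some $2\pi n_0^2$ and $\tilde\phi$ near an explicit geodesic $\gamma_0$ in $H^1$, with all constants tracked. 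Pigeonholing on successive windows then gives $T_{\varepsilon,M}$ explicitly.

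Second, even setting effectiveness aside, your final two moves are not justified. The unique-continuation step for $v=\phi_{\infty,t}$ has first-order coefficients $\phi_{\infty,x}\in L^\infty_t L^2_x$ only, which --- as you yourself note --- is below the regularity for standard observability/Carleman arguments; the null-coordinate route you hint at is plausible in 1D but is essentially the content of Lemma~\ref{lem:2}, i.e.\ the paper's starting point rather than an available black box. And the sentence ``strong $H^1\times L^2$ convergence of $\phi_n[t_n]$ to $(\bar g_0,0)$'' asserts precisely what must be proved: you extracted only weak $H^1$ convergence of the translated data, and Lemma~\ref{lem-conti-dep-inh} needs $\|w[0]\|_{H^1\times L^2}$ small, not merely weakly small. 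Without an additional argument (energy identity plus norm convergence, or a profile decomposition) you cannot conclude $\phi_n[t_n]\in\mathcal{Q}_\varepsilon$ from $\phi_\infty$ being harmonic.
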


In the second stage, for any ``$\varepsilon$-approximate harmonic maps" we  construct an explicit control to lower its energy below the critical value provided $\varepsilon$ is sufficiently small:
\begin{thm}\label{thm-decreseenergy}
Let $T= 2\pi$, $M>0$. There exist some effectively computable constants $\nu_1>0$, $\varepsilon_1>0$, $C_1>0$ such that, for any initial state $u[0]$ such that
\begin{equation*}
    u[0]\in \mathcal{Q}_{\nu_1}\cap \textbf{H}(M),
\end{equation*}
we can construct  an explicit control $\bar f(t, x)$ compactly supported in $[0, T]\times \omega$ satisfying
\begin{equation*}
  \|\bar f\|_{L^{\infty}_t L^2_x([0, T]\times \Sph^1)}\leq C_1,
\end{equation*}
and find some integer $N$ satisfying
\begin{equation*}
    2\pi N^2\leq M,
\end{equation*}
such that
the unique solution of
\begin{equation*}
\begin{cases}
     \Box \phi= \left(|\phi_{t}|^2- |\phi_{ x}|^2\right) \phi+
    f^{\phi^{\perp}},\\ \phi[0](x)= u[0](x),
    \end{cases}
\end{equation*}
verifies
\begin{equation}
    E(T)\in (2\pi N^2- 10 \varepsilon_1^2, 2\pi N^2-  \varepsilon_1^2).  \notag
\end{equation}
\end{thm}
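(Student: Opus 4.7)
My strategy is to reduce to a pure harmonic-map initial state, build an explicit perturbation using the extra dimension $e_3\perp\mathrm{span}(e_1,e_2)$ available because $k\geq 2$, and invoke linear Klein--Gordon controllability on $\Sph^1$ to pick the perturbation.

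Since $u[0]\in\mathcal{Q}_{\nu_1}$, first fix a harmonic map $\bar\phi(x)=\cos(Nx+\theta)e_1+\sin(Nx+\theta)e_2$ of energy $2\pi N^2\leq M$ with $\|u[0]-(\bar\phi,0)\|_{H^1\times L^2}\leq\nu_1$. The plan is to design $\bar f$ based on the initial state $(\bar\phi,0)$ and then transfer to the actual $u[0]$ via the continuous dependence Lemma~\ref{lem-conti-dep-inh}: if $\bar f$ drives $(\bar\phi,0)$ to a state of energy $2\pi N^2-5\varepsilon_1^2$, the solution from $u[0]$ with the same control differs by $O(\nu_1)$ in $H^1\times L^2$, and choosing $\nu_1\ll\varepsilon_1^2$ keeps its final energy inside $(2\pi N^2-10\varepsilon_1^2,2\pi N^2-\varepsilon_1^2)$.

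To design $\bar f$ I would pick a unit vector $e_3\perp\{e_1,e_2\}$ and use the ansatz
\[
\phi^\sharp(t,x)=\cos(\varepsilon_1\chi(t,x))\,\bar\phi(x)+\sin(\varepsilon_1\chi(t,x))\,e_3
\]
for a scalar function $\chi$. Using $\bar\phi\cdot e_3=\bar\phi\cdot\bar\phi_x=\bar\phi_x\cdot e_3=0$ and $|\bar\phi_x|^2=N^2$, a direct expansion gives
\[
E(\phi^\sharp(t,\cdot))=2\pi N^2+\varepsilon_1^2\!\int_{\Sph^1}\!\bigl(\chi_t^2+\chi_x^2-N^2\chi^2\bigr)(t,x)\,dx+O(\varepsilon_1^4),
\]
while the wave maps equation linearized in the $e_3$-direction reduces to the scalar Klein--Gordon equation $\Box\chi+N^2\chi=g$, with $g$ the $e_3$-component of the control. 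Because $T=2\pi>T_0$ of \eqref{Optimal-control-time}, HUM / Ingham-type inequalities give exact controllability of this Klein--Gordon equation by $L^2$-controls supported in $[0,T]\times\omega$, so one can pick $g$ steering $(\chi,\chi_t)$ from $(0,0)$ to $(\chi_*,0)$ at $t=T$ with $\chi_*$ in the negative cone of the quadratic form $Q_N(\chi):=\int(\chi_x^2-N^2\chi^2)\,dx$, normalized so that $\varepsilon_1^2Q_N(\chi_*)\approx-5\varepsilon_1^2$. The nonlinear control is then
\[
\bar f(t,x):=\Box\phi^\sharp-(|\phi^\sharp_t|^2-|\phi^\sharp_x|^2)\phi^\sharp,
\]
automatically supported in $[0,T]\times\omega$ because $\phi^\sharp=\bar\phi$ and $\phi^\sharp_t=0$ wherever $\chi=\chi_t=0$; the solution of the nonlinear equation starting from $(\bar\phi,0)$ is then exactly $\phi^\sharp$, so $E(\phi^\sharp(T))=2\pi N^2+\varepsilon_1^2 Q_N(\chi_*)+O(\varepsilon_1^4)$ falls in the middle of the target window for $\varepsilon_1$ small.

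The main obstacle is to make all constants uniform in $N\in\{1,\ldots,\lfloor\sqrt{M/(2\pi)}\rfloor\}$: a priori both the HUM cost $\|g\|_{L^\infty_t L^2_x}$ and the size $|Q_N(\chi_*)|$ depend on $N$, and the case of small $N$ with small $|\omega|$ is especially delicate since no static localized perturbation can lower the energy (the Dirichlet eigenvalue of $-\partial_{xx}$ on $\omega$ exceeds $N^2$). The remedy is to work with $\chi_*$ concentrated on Fourier modes of frequency $<N$ on $\Sph^1$, on which $Q_N$ is uniformly coercive in the negative direction and Ingham inequalities give a uniformly bounded control cost -- this crucially uses the full horizon $T=2\pi$ so that the small-frequency waves produced in $\omega$ can propagate around $\Sph^1$. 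One then checks, using Lemma~\ref{lem:inhwm}, that the $O(\varepsilon_1^4)$ remainder in the energy expansion is genuinely controlled, so that the leading-order calculation is not contaminated.
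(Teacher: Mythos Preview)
Your overall strategy matches the paper's: reduce to a harmonic-map initial state via Lemma~\ref{lem-conti-dep-inh}, exploit the extra direction $e_3$, linearize to the scalar Klein--Gordon equation $\Box\chi+N^2\chi=g$, and use its controllability on $[0,2\pi]\times\omega$ to reach a terminal state on which the quadratic form $Q_N(\chi)=\int(\chi_x^2-N^2\chi^2)$ is negative. This is exactly the mechanism behind the paper's Proposition~\ref{prop-pse-decay}.

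The genuine gap is your definition of the nonlinear control. You set $\bar f:=\Box\phi^\sharp-(|\phi^\sharp_t|^2-|\phi^\sharp_x|^2)\phi^\sharp$ and claim it is supported in $[0,T]\times\omega$ ``because $\phi^\sharp=\bar\phi$ and $\phi^\sharp_t=0$ wherever $\chi=\chi_t=0$.'' But $\chi$ is \emph{not} supported in $\omega$: it solves a Klein--Gordon equation with localized source and zero data, so it propagates over all of $\Sph^1$. A direct computation of your residual in the orthonormal frame $(\bar\phi,\bar\phi_x/N,e_3)$ shows that outside $\omega$ (where $\Box\chi=-N^2\chi$) the $\bar\phi_x$-component of $\bar f$ equals $-2\varepsilon_1\chi_x\sin(\varepsilon_1\chi)=O(\varepsilon_1^2)$, which is generically nonzero. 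So $\bar f$ is not an admissible localized control. This is not merely a technicality: your own discussion of ``the main obstacle'' (and, in the paper, Proposition~\ref{prop:smalltimecontrol}(i)) shows that when $|\omega|$ is small one \emph{cannot} lower $Q_N$ with $\chi$ spatially supported in $\omega$, so propagation of $\chi$ outside $\omega$ is essential---but then your prescribed trajectory fails to solve the free wave maps equation there.

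The paper avoids this by reversing the logic: it fixes the localized control $f=\varepsilon(0,0,g)^T$ \emph{first} and lets $\bar\phi$ be the exact solution of the controlled wave maps equation, then expands $\bar\phi=\bar\phi_0+\varepsilon\bar\phi_1+w$ (a power series expansion in $\varepsilon$). Only $\bar\phi_1^3=\bar v$ is nontrivial at first order, solving $\Box\bar v+\bar v=g$, and the energy change is read off from the exact identity $E(T)-E(0)=-2\int_0^T\!\int_{\Sph^1}\bar\phi_t\cdot f^{\bar\phi^\perp}$, which at order $\varepsilon^2$ gives precisely your functional $\int(|\bar v_x|^2+|\bar v_t|^2-|\bar v|^2)$ evaluated between $0$ and $T$; choosing $g$ to steer $(\bar v,\bar v_t)$ from $(0,0)$ to $(-1,0)$ makes this equal to $-2\pi$. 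The remainder $w=O(\varepsilon^2)$ is handled by a bootstrap in the $\mathcal{W}_T$-norm, not by forcing an explicit trajectory. In short: choose the localized control and analyze the resulting solution, rather than prescribing a trajectory and hoping the residual is localized.
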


Armed with the preceding theorems,  we can easily prove Theorem \ref{THM-globalexact}, while the rest of this section is devoted to the proof of the two preceding theorems.
\begin{proof}[Proof of Theorem \ref{THM-globalexact}]
    This is a direct consequence of Theorems \ref{thm-converharm}--\ref{thm-decreseenergy} and the low-energy exact controllability property, Proposition \ref{thm:semicontrol} $(ii)$.
\end{proof}

\subsection{Proof of Theorem \ref{thm-converharm}: the damped wave maps flow convergences to harmonic maps}

We first focus in this section on the proof of Theorem \ref{thm-converharm}  showing that for any given initial state the damped wave maps flow converges to some harmonic map.
Indeed, the required result is a direct consequence of Proposition \ref{thm:semicontrol} $(i)$ and the following intermediate result.


\begin{prop}\label{prop:23}
Let $M> 0$. There exist some effectively computable integer $p>0$ and constants $\varepsilon_0>0$ and $C_0>0$ such that  for any $\varepsilon\in (0, \varepsilon_0)$,
if some wave map $\phi[t]|_{t\in [0, 32\pi]}$, solution of the system \eqref{eq:dwm}, verifies
\begin{equation*}
    1\leq E(0)\leq M,
\end{equation*}
then, either
\begin{equation}\label{eq:cho1}
    \int_0^{32\pi} \int_{\Sph^1} a(x) |\phi_t|^2(t, x) dx dt\geq \delta_1 \; \textrm{ with } \delta_1= \delta_1(\varepsilon):= C_0 \varepsilon^{4p},
\end{equation}
or
\begin{equation}\label{eq:cho2}
    \exists\; \bar t\in [0, 32\pi] \textrm{ such that } \phi[\bar t]\in \mathcal{Q}_{\varepsilon}.
\end{equation}
\end{prop}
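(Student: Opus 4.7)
My plan is to prove the contrapositive: assuming the spacetime damping integral is strictly smaller than $C_0 \varepsilon^{4p}$, I will construct a time $\bar t \in [0, 32\pi]$ at which $\phi[\bar t]$ lies within $\varepsilon$ of a harmonic map in $H^1_x \times L^2_x$. The underlying reason this dichotomy should hold is that harmonic maps are exactly the stationary solutions of the undamped wave maps equation, so a quantitative smallness of $\phi_t$ at a single time essentially forces $\phi$ to approximately solve the harmonic map equation.

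First, by a Chebyshev/averaging argument, the assumed smallness of $\int_0^{32\pi} \int_{\Sph^1} a(x)|\phi_t|^2 \, dx \, dt$ yields a large subset of times $t \in [0, 32\pi]$ for which $\|\sqrt{a}\,\phi_t(t,\cdot)\|_{L^2(\Sph^1)}^2$ is polynomially small in $\varepsilon$. In particular, on such times $\phi_t$ is small in $L^2(\omega)$. The crucial next step is an observability-type propagation estimate upgrading this localized smallness of $\phi_t$ on $\omega$ to a global smallness $\|\phi_t(\bar t, \cdot)\|_{L^2(\Sph^1)} \leq \varepsilon/K$ at some nearby time $\bar t$. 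Since $32\pi$ comfortably exceeds several transit times on $\Sph^1$, the finite speed of propagation combined with a multiplier-based observability inequality applied to the wave equation for $\phi$ (with forcing $(|\phi_t|^2 - |\phi_x|^2)\phi$ treated as a perturbation controlled in terms of the energy $\leq M$) is the natural tool. This is where the exponent $4p$ should arise: handling the nonlinear forcing requires iterating observability together with the energy bound a controlled number $p$ of times, each iteration absorbing a polynomial loss in $\varepsilon$ via the lower bound $E(0) \geq 1$ to prevent degeneration.

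With $\|\phi_t(\bar t, \cdot)\|_{L^2}$ globally small, I freeze time at $\bar t$. The damped wave maps equation \eqref{eq:dwm} then reads
\begin{equation*}
\phi_{xx}(\bar t, x) + |\phi_x(\bar t, x)|^2 \phi(\bar t, x) = \phi_{tt}(\bar t, x) + |\phi_t(\bar t, x)|^2 \phi(\bar t, x) - a(x)\phi_t(\bar t, x),
\end{equation*}
and the right-hand side is small in $H^{-1}$ by the bound on $\phi_t$ (using the equation to also control $\phi_{tt}$ weakly). Thus $\phi(\bar t, \cdot)$ approximately solves the harmonic map equation from $\Sph^1$ to $\Sph^k$. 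Harmonic maps of energy $\leq M$ form a compact union of finitely many finite-dimensional manifolds of great-circle parametrizations indexed by winding number $N$ with $2\pi N^2 \leq M$, so a quantitative stability argument for this ODE boundary value problem produces an actual harmonic map $\bar\phi$ with $\|\phi(\bar t, \cdot) - \bar\phi\|_{H^1} + \|\phi_t(\bar t, \cdot)\|_{L^2} \leq \varepsilon$, placing $\phi[\bar t]$ in $\mathcal{Q}_\varepsilon$.

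The step I expect to be the main obstacle is the quantitative propagation from localized to global smallness of $\phi_t$ in the presence of an $H^1$-sized nonlinear term: a naive linear observability inequality is inadequate because the forcing $(|\phi_t|^2 - |\phi_x|^2)\phi$ is only bounded, not small in $\varepsilon$. The plan is to bootstrap on the length of time windows strictly shorter than $32\pi$, alternating observability on linearized pieces with the global energy identity; the iteration index $p$ and the exponent $4p$ in $\delta_1$ are effectively dictated by the number of such windows needed to cover $[0, 32\pi]$ in a way compatible with the geometric control provided by $\omega$.
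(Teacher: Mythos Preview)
Your contrapositive strategy and the intuition that smallness of $\phi_t$ forces $\phi$ to nearly solve the harmonic map equation are both correct, and the overall architecture matches the paper. However, there is a genuine gap in your ``freeze time at $\bar t$'' step.

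You claim that the right-hand side of
\[
\phi_{xx}(\bar t, x) + |\phi_x(\bar t, x)|^2 \phi(\bar t, x) = \phi_{tt}(\bar t, x) + |\phi_t(\bar t, x)|^2 \phi(\bar t, x) - a(x)\phi_t(\bar t, x)
\]
is small in $H^{-1}$, ``using the equation to also control $\phi_{tt}$ weakly.'' But the equation determines $\phi_{tt}$ \emph{precisely} as $\phi_{xx} + (|\phi_t|^2 - |\phi_x|^2)\phi + a\phi_t$, so invoking it here is circular and yields a tautology. More concretely: smallness of $\phi_t(\bar t,\cdot)$ in $L^2_x$ at a single time says nothing about $\phi_{tt}(\bar t,\cdot)$; think of $\phi_t$ having a zero crossing in $t$ where $\phi_{tt}$ is order one. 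You therefore cannot conclude that $\phi(\bar t,\cdot)$ is an approximate harmonic map from data at a single time.

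The paper repairs this with a \emph{time-averaging} device that your plan is missing. The propagation lemma it quotes (Lemma~\ref{lem:2}) delivers $\|\phi_t\|_{L^\infty_x L^2_t(I)}^2 \lesssim \delta^{1/p}$ on an interval $I$; note the norm order, small in $L^2$ in \emph{time}, uniformly in $x$. One then sets $\tilde\phi(x) := \int_I \phi(t,x)\psi(t)\,dt$ for a temporal bump $\psi$, and integrates the equation against $\psi$: this converts the troublesome $\phi_{tt}$ into $-\int \phi_t \psi'\,dt$, which \emph{is} small by the $L^\infty_x L^2_t$ bound. After a separate argument that $\int |\phi_x|^2\psi\,dt \approx E(0)/2\pi$ uniformly in $x$, one obtains the linear pointwise bound $|\tilde\phi_{xx} + \tfrac{E(0)}{2\pi}\tilde\phi| \lesssim \delta^{1/2p}$. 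A Fourier expansion of this scalar ODE on $\Sph^1$ then forces $E(0)$ close to $2\pi n_0^2$ and produces an explicit geodesic $\gamma_0$ with $\|\tilde\phi - \gamma_0\|_{H^1} \lesssim \delta^{1/2p}$. Only \emph{after} the harmonic map is identified does one return, by a second averaging, to extract a single time $t_\delta \in I$ with $\|\phi[t_\delta] - (\gamma_0,0)\|_{H^1\times L^2} \lesssim \delta^{1/4p}$. The exponent $4p$ is thus not a window count as you suggest, but the concatenation of the $1/p$ from the propagation lemma with two square-root losses from Cauchy--Schwarz in the averaging steps.
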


Before giving the proof of Proposition \ref{prop:23} we shall recall the following lemma demonstrated in \cite{Krieger-Xiang-2022}.  Actually, in \cite{Krieger-Xiang-2022} the authors only proved the result for  the special case that  $M= 2\pi$, while the same method also works for the general energy upper bound $M$.
\begin{lem}[\cite{Krieger-Xiang-2022},  Proposition 2.2]\label{lem:2}
Let $M>0$. There exist some effectively computable  integer $p>0$ and constant $C_p>0$ such that, for any $\delta\in (0, 1)$, if  some solution of the damped wave maps equation \eqref{eq:dwm} verifies
\begin{gather}
    E(0)\leq M,\\
\int_{-16\pi}^{16\pi}\int_{\mathbb{S}^1}a(x)|\phi_t|^2(t, x) dxdt\leq \delta  E(0), \label{as:eq:p2}
\end{gather}
then
\begin{equation}
     \|\phi_t\|_{L^{\infty}_x(\Sph^1; L^2_t(0, 3\pi))}^2\leq e_0 E(0),
\end{equation}
where $e_0= e_0(\delta)= C_p \delta^{1/p}$.
\end{lem}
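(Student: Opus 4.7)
The plan is to work in null coordinates $u = t+x$, $v = t-x$, in which the damped wave maps equation becomes $\phi_{uv} = -(\phi_u\cdot\phi_v)\phi - \frac{a(x)}{4}\phi_t$ with $\phi_u = \frac{1}{2}(\phi_t+\phi_x)$, $\phi_v = \frac{1}{2}(\phi_t-\phi_x)$. The key structural observation specific to the $\Sph^k$-target is that $|\phi|=1$ forces $\phi\cdot\phi_u = \phi\cdot\phi_v = 0$, so pairing the equation against $\phi_u$ kills the quadratic term $(\phi_u\cdot\phi_v)\phi$ entirely and leaves the \emph{linear} transport identity
\begin{equation*}
\partial_v |\phi_u|^2 = -\frac{a(x)}{2}\,\phi_u\cdot\phi_t, \qquad \partial_u |\phi_v|^2 = -\frac{a(x)}{2}\,\phi_v\cdot\phi_t.
\end{equation*}
This reduces the problem to propagating smallness of $\phi_t$ from the region $\{a>0\}$ to all of $\Sph^1$ along characteristics, with quadratic error terms absent along the way.

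The core step is a propagation-of-smallness argument on the wide time window $[-16\pi,16\pi]$. For a target point $(t,x)\in [0,3\pi]\times \Sph^1$, I integrate the identity above along the $v$-characteristic $s\mapsto(s,\,s-t+x)$ between $(t,x)$ and a reference time $s_0\in[-16\pi,16\pi]$ at which the characteristic crosses $\omega$ (such an $s_0$ exists for every $(t,x)$ because the window has length $32\pi\gg 2\pi$). Cauchy--Schwarz together with the hypothesis $\int\!\!\int a|\phi_t|^2 \leq \delta E(0)$ controls the transported error by $(\delta E(0))^{1/2}\|\phi_u\|_{L^2_t(\text{char}\cap\omega)}^{1/2}$, reducing the pointwise bound on $|\phi_u|^2(t,x)$ to an $L^2_t$-bound of $\phi_u$ along the characteristic restricted to $\omega$. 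The symmetric step transports $|\phi_v|^2$ along $u$-characteristics. Since $\phi_t=\phi_u+\phi_v$, the only information given by the hypothesis on $\omega$ is about the sum; to split it, one bounces between $u$- and $v$-characteristics within the window, each reflection converting control of one null derivative into partial control of the other via the weighted integral assumption. After $p$ bounces, the accumulated smallness factor is a root of the original, producing the claimed dependence $e_0=C_p\delta^{1/p}$; integration of the resulting pointwise bound over $t\in[0,3\pi]$ yields the $L^\infty_x L^2_t$ estimate.

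The main obstacle is precisely the iterative disentanglement of $\phi_u$ and $\phi_v$ from the joint information on $\phi_t$ within $\omega$, and the bookkeeping needed to certify that the required reflections all fit inside $[-16\pi,16\pi]$ with a fixed integer $p$ depending only on the geometry of $\omega\subset \Sph^1$. The a priori $L^2_vL^\infty_u$ and $L^\infty_vL^2_u$ bounds on $\phi_u,\phi_v$ supplied by Lemma~\ref{lem:inhwm} are what justify applying Cauchy--Schwarz at each iteration and ensure that the energy-level constants remain uniformly controlled in terms of $M$. The generalization of \cite[Proposition 2.2]{Krieger-Xiang-2022} from $M=2\pi$ to general $M$ is thus essentially cosmetic: the trilinear and bilinear norms used in the bootstrap are bounded by the energy $E(0)\leq M$ via the continuous-dependence estimates of Lemma~\ref{lem-conti-dep-inh}, so the integer $p$ can be taken independent of $M$ while the prefactor $C_p$ is allowed to depend on $M$.
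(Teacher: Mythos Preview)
The present paper does not give its own proof of this lemma: it is quoted verbatim from \cite{Krieger-Xiang-2022} (Proposition~2.2 there), with the single remark that ``in \cite{Krieger-Xiang-2022} the authors only proved the result for the special case that $M=2\pi$, while the same method also works for the general energy upper bound $M$.'' There is therefore no proof in this paper to compare your proposal against.

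That said, your outline is a faithful reconstruction of the mechanism one expects in the cited reference. The null-coordinate transport identities $\partial_v|\phi_u|^2 = -\tfrac{a}{2}\phi_u\cdot\phi_t$ and $\partial_u|\phi_v|^2 = -\tfrac{a}{2}\phi_v\cdot\phi_t$, obtained by exploiting $\phi\cdot\phi_u=\phi\cdot\phi_v=0$, are exactly the structure that makes the $1{+}1$-dimensional problem tractable, and the characteristic-bouncing iteration you describe is the natural way to upgrade the weighted bound on $\phi_t$ in $\omega$ to separate control of $\phi_u$ and $\phi_v$ everywhere. The wide window $[-16\pi,16\pi]$ and the appearance of an integer exponent $p$ in $\delta^{1/p}$ are both consistent with a fixed finite number of reflections determined by the geometry of $\omega$. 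One small point of care: the hypothesis $\int\!\!\int a|\phi_t|^2\leq \delta E(0)$ is a spacetime integral, not an integral along a single characteristic, so your Cauchy--Schwarz step must be carried out after also integrating (or taking a supremum) in the transverse null variable; this is precisely where the mixed norms $L^2_uL^\infty_v$, $L^\infty_vL^2_u$ from Lemma~\ref{lem:inhwm} enter, as you indicate at the end. Your closing remark that the passage from $M=2\pi$ to general $M$ is cosmetic is exactly what the paper asserts.
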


\begin{proof}[Proof of Proposition \ref{prop:23}]
Let us assume that the inequality \eqref{eq:cho1} is false for some $\delta>0 $ with its value to be chosen later on, therefore,
\begin{equation}\label{eq:cho1-f}
    \int_0^{32\pi} \int_{\Sph^1} a(x) |\phi_t|^2(t, x) dx dt< \delta
\end{equation}
In the following we show that by selecting $\delta$ sufficiently small depending on the value of $\varepsilon$, the alternative \eqref{eq:cho2} ought to be true.    One immediately infers from the assumption \eqref{eq:cho1} and the energy identity
\begin{equation*}
    E(t)= E(0)- 2\int_0^t \int_{\Sph^1} a(x) |\phi_t|^2(s, x) dx ds,
\end{equation*}
that
\begin{equation*}
    E(0)- E(t)\leq 2\delta, \;\forall t\in [0, 32\pi].
\end{equation*}
Using  in particular \eqref{estimate-with-Q},
\begin{gather*}
   Q^{-1}\leq  Q^{-1} E(0)\leq E(16\pi)\leq E(0)\leq M,
\end{gather*}
which, together with \eqref{eq:cho1-f}, gives
\begin{gather*}
    \int_0^{32\pi} \int_{\Sph^1} a(x) |\phi_t|^2 dx dt\leq \delta\leq (Q \delta) E(16 \pi).
\end{gather*}
Then, according to Lemma \ref{lem:2}, by regarding $t= 16\pi$ as the initial time,  we know that
\begin{equation}\label{keyes1}
      \|\phi_t\|_{L^{\infty}_x(\Sph^1; L^2_t(I))}^2\leq e_0 (Q\delta) E(16\pi)\leq M C_p Q^{\frac{1}{p}} \delta^{\frac{1}{p}},
\end{equation}
where the time interval $I$ is defined as
\begin{equation}
 I:= [16\pi, 19\pi].
\end{equation}

Now, following the argument  in \cite{Krieger-Xiang-2022} the proof is composed of 4 steps. As we shall see later, the first 3 steps remain as in \cite{Krieger-Xiang-2022}, while a major difference appears in the last step. \\

\noindent {\bf Step 1.}  Let  $\psi(t)$ be a smooth non-negative bump function satisfying
 \begin{align*}
 \int_{\mathbb{R}}\psi(t)\,dt = 1,\,\text{supp}(\psi)\subset I.
\end{align*}
Then
\begin{align*}
-\int_I\int_{\Sph^1}[\big|\phi_t\big|^2 + \big|\phi_x\big|^2]\psi(t)\,dx dt + E(0) = \int_I \psi(t)(-E(t) + E(0))\,dt\leq 2 \delta.
\end{align*}
The preceding inequality, when combined with \eqref{keyes1}, implies that
\begin{align*}
\Big|\int_I\int_{\Sph^1} \big|\phi_x\big|^2\psi(t)\,dx dt - E(0)\Big|\lesssim \delta^{\frac{1}{p}}.
\end{align*}
Thus, thanks to  the intermediate value theorem, one obtains  the existence of some $\bar{x}\in \Sph^1$ such that
\begin{align}\label{bound_step1}
\Big|\int_{I} \big|\phi_x\big|^2(t, \bar{x})\psi(t)\, dt - \frac{E(0)}{2\pi}\Big|\lesssim \delta^{\frac{1}{p}}.
\end{align}

\noindent {\bf Step 2.}  Now, we claim that such a  bound in \eqref{bound_step1} can be ``propagated" to arbitrary $x_1\in \Sph^1$, in a slightly weaker form with $ \delta^{\frac{1}{p}}$ replaced by $ \delta^{\frac{1}{2p}}$. In fact, for any given  $x_1\in \Sph^1$, we observe that
\begin{align*}
&-\int_{I} \big|\phi_x\big|^2(t, \bar{x})\psi(t)\, dt + \int_{I} \big|\phi_x\big|^2(t, x_1)\psi(t)\, dt\\
& = 2\int_{\bar{x}}^{x_1}\int_I\phi_{xx}\cdot\phi_x\psi(t)\,dt dx\\
& = 2\int_{\bar{x}}^{x_1}\int_I\big[\phi_{tt} + a(x)\phi_t\big]\cdot\phi_x\psi(t)\,dt dx\\
& = 2\int_{\bar{x}}^{x_1}\int_I\big[-\phi_t\cdot\phi_x\psi'(t) - \phi_t\cdot\phi_{tx}\psi(t) + a(x)\phi_t\cdot\phi_x\psi(t)\big]\,dt dx.
\end{align*}
Then, we can individually bound
\begin{gather*}
\Big|2\int_{\bar{x}}^{x_1}\int_I\phi_t\cdot\phi_x\psi'(t)\,dt dx\Big|\lesssim \big\|\phi_t\big\|_{L_x^\infty L_t^2(I)} \big\|\phi_x\big\|_{L_x^2 L_t^2(I)}\lesssim \delta^{\frac{1}{2p}}, \\
 \Big| 2\int_{\bar{x}}^{x_1}\int_I a(x)\phi_t\cdot\phi_x\psi(t)\,dt dx\Big|\lesssim \big\|\phi_t\big\|_{L_x^\infty L_t^2(I)} \big\|\phi_x\big\|_{L_x^2 L_t^2(I)}\lesssim \delta^{\frac{1}{2p}},
\end{gather*}
as well as (using integration by parts with respect to $x$ variable),
 \begin{align*}
 \Big| 2\int_{\bar{x}}^{x_1}\int_I\left(\phi_t\cdot\phi_{tx}\right) \psi(t)\,dt dx\Big|&= \int_I |\phi_t(t, x_1)|^2- |\phi_t(t, \bar x)|^2 dt \\
 &\lesssim \big\|\phi_t\big\|_{L_x^\infty L_t^2(I)}^2\lesssim \delta^{\frac{1}{p}}.
 \end{align*}
 Combining these bounds, we infer that
 \begin{equation}\label{eq:phixuniformbound}
 \sup_{x_1\in \Sph^1}\Big|\int_{I} \big|\phi_x\big|^2(t, x_1)\psi(t)\, dt - \frac{E(0)}{2\pi}\Big|\lesssim \delta^{\frac{1}{2p}}.
 \end{equation}

 \noindent {\bf Step 3.} In this step we show that the following {\it time-independent} function is close to a harmonic map:
 \begin{align*}
 \tilde{\phi}(x) := \int_{\mathbb{R}} \phi(t, x)\psi(t)\,dt.
 \end{align*}

  We  know that  for any  $t_1\in I$
 \begin{align*}
 \int_I \phi(t, x)\psi(t)\,dt &= \phi(t_1, x)  + \int_I [\phi(t, x) - \phi(t_1,x)]\psi(t)\,dt\\
 & =  \phi(t_1, x)  + \int_I \left(\int_{t_1}^t \phi_t(s,x)\right)ds \,\psi(t)\, dt.
 \end{align*}
Then, thanks to \eqref{bound_step1} and \eqref{keyes1}, we find
 \begin{equation}\label{eq:phitildephiproximity}
     \left| \tilde \phi(x)- \phi(t_1, x)\right|\lesssim \|\phi_t\|_{L^{\infty}_x L^2(I)}\lesssim \delta^{\frac{1}{2p}}, \; \forall t_1\in I,\; \forall x\in \Sph^1.
 \end{equation}
 Thus
 \begin{align}
 \Big|\tilde{\phi}(x)\Big| &= 1 + O\big(\big\|\phi_t\big\|_{L_x^\infty L_t^2(I)}\big)\notag\\
 & = 1 + O(\delta^{\frac{1}{2p}})\label{esttildephi}.
 \end{align}
 Therefore, there exists some effectively computable number $\delta_{s1}>0$ such that
 \begin{equation}
   \Big|\tilde{\phi}(x)\Big|\in \left[\frac12, \frac32\right], \; \forall x\in \mathbb{S}^1,   \notag
 \end{equation}
 provided that $\delta$ is smaller than $\delta_{s1}$.\\

We now deduce that $\tilde{\phi}$ approximately solves the harmonic map equation, which has important implications on its structure.
 Note that
 \begin{equation}\label{eq:tildephixx}\begin{split}
 \tilde{\phi}_{xx} &= \int_{\mathbb{R}} \phi_{xx}\psi(t)\,dt\\
 & = \int_{\mathbb{R}} \big(\phi_{tt} - |\phi_x|^2\phi + |\phi_t|^2\phi + a(x)\phi_t\big)\psi(t)\,dt\\
 & = \int_{\mathbb{R}} \big(- |\phi_x|^2\phi \psi(t) - \phi_t\psi_t + |\phi_t|^2\phi\psi(t) + a(x)\phi_t\psi(t)\big)\,dt
 \end{split}\end{equation}
 All terms involving at least one factor $\phi_t$ are small thanks to \eqref{keyes1}. In particular, we obtain
 \begin{align*}
 \Big|\int_{\mathbb{R}} \big(- \phi_t\psi_t + |\phi_t|^2\phi\psi(t) + a(x)\phi_t\psi(t)\big)\,dt\Big|
 \lesssim \delta^{\frac{1}{2p}}
 \end{align*}
 uniformly in $x\in S^1$.
 \\

 Taking advantage of \eqref{eq:phixuniformbound} and \eqref{eq:phitildephiproximity}, we find  that
 \begin{align*}
 \Big|\int_I |\phi_x|^2(t, x)\phi(t, x) \psi(t)\,dt  - \tilde{\phi}(x)\cdot\frac{E(0)}{2\pi}\Big|\lesssim \delta^{\frac{1}{2p}}
 \end{align*}
 for all $x\in S^1$. Indeed, by picking  some point $t_0\in I$ we obtain
 \begin{gather*}
     \left|\int_{\mathbb{R}} \phi \psi |\phi_x|^2 dt- \phi(t_0) \int_{\mathbb{R}} \psi |\phi_x|^2 dt\right|\lesssim \|\phi_t\|_{L^{\infty}_x L^2_t(I)}\lesssim \delta^{\frac{1}{2p}}, \\
      \left|\frac{E(0)}{2\pi}\cdot \int_{\mathbb{R}} \phi \psi dt- \frac{E(0)}{2\pi}\cdot\phi(t_0)\right|\lesssim \|\phi_t\|_{L^{\infty}_x L^2_t(I)}\lesssim \delta^{\frac{1}{2p}}.
 \end{gather*}

 In conclusion, the preceding bounds and \eqref{eq:tildephixx} imply  that  the function $\tilde \phi$ satisfies the following
 \begin{align}
 \Big|\tilde{\phi}_{xx}(x) + \frac{E(0)}{2\pi}\cdot\tilde{\phi}(x)\Big|\lesssim \delta^{\frac{1}{2p}}
 \end{align}
 for all $x\in \Sph^1$.
\\

\noindent {\bf Step 4.}
We can now show that the value of $E(0)$ is close to the discrete set  $\mathcal{A}: = \{2\pi n^2\,|\,n\in \mathbb{N}\}$, with the distance depending on the value of $\delta$.
Denote
\begin{equation}\label{eq:deltastar}
\delta_*: = \text{dist}\big(E(0), \mathcal{A}\big)\geq 0.
\end{equation}
Since $E(0)$ is smaller than $M$, so is $\delta_*$.
Writing
$$f(x): = \tilde{\phi}_{xx}(x) + \frac{E(0)}{2\pi}\tilde{\phi}(x),\; \forall x\in \Sph^1,$$
we can develop the functions  $f$ and  $\tilde{\phi}$ into Fourier series
\begin{equation}\label{eq:Fourierseries}
f(x) = \sum_{n\in \Z}f_n e^{inx} \textrm{ and } \tilde{\phi}(x) = \sum_{n\in \Z}a_n e^{inx},
\end{equation}
where  we have the straightforward bounds
\begin{align}
\label{estfn}
\big|f_n \big|\leq \big\|f\big\|_{L^1(S^1)}\lesssim \delta^{\frac{1}{2p}},\;\forall n\in \Z.
\end{align}
Moreover, since both $\tilde \phi(x)$ and $f(x)$ are real valued,
\begin{equation}
    f_{-n}= \overline{f_n} \textrm{ and } a_{-n}= \overline{a_n}.\notag
\end{equation}
By comparing the coefficients of the preceding equation,  the value of $a_n$ in turn satisfies
\begin{equation}\label{linkaf}
    a_n\big(\frac{E(0)}{2\pi} - n^2\big) = f_n.
\end{equation}
Since the value of $M$ is fixed, we know from the assumption $E(0)\leq M$ that
\begin{gather*}
     \delta_* n^2 \lesssim |E(0)- 2\pi n^2|,  \;  \forall |n|\leq M,    \\
      \delta_* n^2 \lesssim |E(0)- 2\pi n^2|,  \;  \forall |n|> M.
\end{gather*}
Hence,
\begin{equation*}
    \delta_* n^2 \lesssim |E(0)- 2\pi n^2|,  \;  \forall E(0)\leq M,\; \forall n\in \mathbb{Z}.
\end{equation*}
Therefore, by taking advantage of \eqref{eq:deltastar},
\begin{equation}\label{eq:anbounds}
\big|a_n\big|\lesssim \frac{\big|f_n\big|}{\delta_* n^2}\lesssim  \frac{\delta^{\frac{1}{2p}}}{\delta_* n^2}.
\end{equation}
Recalling that  $ \Big|\tilde{\phi}(x)\Big|\in [\frac12, \frac32]$, this implies  that
\begin{align*}
\frac{1}{2}\leq \Big|\tilde{\phi}(x)\Big|\leq \sum_{n\in \Z}\big|a_n\big|\lesssim \frac{\delta^{\frac{1}{2p}}}{\delta_*}, \; \forall x\in \Sph^1.
\end{align*}
Hence
\begin{equation}
    \text{dist}\big(E(0), \mathcal{A}\big)= \delta_*\lesssim \delta^{\frac{1}{2p}}.
\end{equation}
In the following we shall let $\delta$ be smaller than some effectively computable constant $\delta_{s2}$ such that $\delta_*$ is smaller than $1/5$.
\\

Now let us assume that $E(0)$ is close to $2\pi n_0^2$ for some non-zero integer $n_0$. Thus from our assumption on $\delta_*$ we derive
\begin{equation*}
    \left|\frac{E(0)}{2\pi}- n^2\right|\geq 1,\; \forall n\neq \pm n_0.
\end{equation*}
Observe that, from \eqref{estfn}  and \eqref{linkaf},
\begin{equation}\label{neqn0small}
    |a_n|\lesssim \frac{\delta^{\frac{1}{2p}}}{n^2},\; \forall n\neq \pm n_0.
\end{equation}
Hence
\begin{align*}
\sum_{n\neq \pm n_{0}}\big|a_n\big|\lesssim \sum_{n\neq \pm n_{0}} \frac{\delta^{\frac{1}{2p}}}{n^2} \lesssim \delta^{\frac{1}{2p}},
\end{align*}
 which, in conjunction with \eqref{esttildephi} as well as \eqref{eq:Fourierseries}, gives
 \begin{align*}
\Big|\big|\sum_{n = \pm n_{0}}a_n e^{inx}\big| - 1\Big|\lesssim \delta^{\frac{1}{2p}}.
 \end{align*}
 Recalling that $a_{n}$ and $a_{-n}$ are conjugate, we write
 $$a_{n_{0}} = \alpha_{0} + i \beta_{0} \textrm{ and } a_{-n_{0}} = \alpha_{0} - i \beta_{0}, \text{ with $\alpha_0$ and $\beta_0$ \text{ in } $\R^{k+1}$}.$$
This gives
 \begin{align}\label{roughestildphi}
 \tilde{\phi}(x) = 2 \alpha_0 \cos n_{0} x  - 2\beta_0 \sin n_{0}x  + O\big(\delta^{\frac{1}{2p}}\big).
 \end{align}
 Thus
 \begin{equation*}
    |2 \alpha_0 \cos n_{0} x  - 2\beta_0 \sin n_{0}x|= 1+ O\big(\delta^{\frac{1}{2p}}\big), \;\forall x\in \Sph^1.
 \end{equation*}
 We easily infer from the preceding equation that
 \begin{align*}
&\big|\alpha_{0}\big| = \big|\beta_{0}\big| + O(\delta^{\frac{1}{2p}}) = \frac12 + O(\delta^{\frac{1}{2p}}),\\
& \alpha_{0}\cdot \beta_{0}= O(\delta^{\frac{1}{2p}}).
 \end{align*}
 It follows that there exist $\mu_{0}\in \R^{k+1}$ and $\nu_{0}\in \R^{k+1}$ satisfying
 $$\mu_{0}= 2\alpha_{0} + O(\delta^{\frac{1}{2p}}), \nu_{0} = 2\beta_{0} + O(\delta^{\frac{1}{2p}}),$$
 such that
 \begin{gather*}
 |\mu_0|= |\nu_0|= 1, \; \mu_0\cdot \nu_0=0.
 \end{gather*}
 Indeed, it suffices to select

\begin{align*}
    \mu_0:= \frac{2\alpha_0}{|2\alpha_0|} \; \textrm{ and } \; \nu_0:= \frac{2\beta_0- 2(\beta_0\cdot \alpha_0) \frac{\alpha_0}{|\alpha_0|^2}}{\left|2\beta_0- 2(\beta_0\cdot \alpha_0) \frac{\alpha_0}{|\alpha_0|^2}\right|}.
\end{align*}
Concerning such a selected pair we know that the curve
 \begin{gather*}
 \gamma_{0}(x): = \mu_{0}\cos n_{0} x -  \nu_{0}\sin n_{0}x
 \end{gather*}
 is a geodesic, since it satisfies the harmonic maps equation
 \begin{equation*}
     \gamma_{0xx}+ |\gamma_{0x}|^2\gamma_0= 0.
 \end{equation*}
 Moreover, thanks to  Equation \eqref{roughestildphi}, it also satisfies
  the proximity condition
 \begin{equation}\label{eq:geodesicproxy1}
 \Big|\tilde{\phi}(x) -  \gamma_{0}(x)\Big|\lesssim \delta^{\frac{1}{2p}}.
 \end{equation}

 The preceding formula, together with \eqref{eq:phitildephiproximity}, gives the $L^{\infty}$-closeness of $\phi(t, x)$ to the geodesic $\gamma_0(x)$:
 \begin{equation}
     |\phi(t, x)- \gamma_0(x)|\lesssim \delta^{\frac{1}{2p}}, \; \forall t\in I, \;\forall x\in \Sph^1.
 \end{equation}

We claim also that there exists some $\bar t\in I$ such that $\phi(\bar t, \cdot)$ is a $O(\delta^{\frac{1}{4p}})-$approximate harmonic map, namely,
\begin{equation*}
    \|(\phi[\bar t]- (\gamma_0(x), 0)\|_{H^1_x\times L^2_x}\lesssim \delta^{\frac{1}{4p}}.
\end{equation*}
 In fact, observe that
 \begin{align*}
\int_I \|\phi_x - \tilde{\phi}_x\|_{L_x^2(\Sph^1)}^2\psi(t)\,dt
&= -\int_I \int_{\Sph^1}(\phi - \tilde{\phi})\cdot (\phi_{xx} - \tilde{\phi}_{xx})\psi(t)\,dx dt\\
& =  -\int_I \int_{\Sph^1}(\phi - \tilde{\phi})\cdot \big[\phi_{tt} + (|\phi_t|^2 - |\phi_x|^2)\phi + a(x)\phi_t - \tilde{\phi}_{xx}\big]\psi(t)\,dx dt.
 \end{align*}
 Then, we estimate
 \begin{align*}
 \Big|\int_I \int_{\Sph^1}(\phi - \tilde{\phi})\cdot \phi_{tt}\psi(t)\,dx dt\Big|
 &\leq \Big|\int_I \int_{\Sph^1}(\phi - \tilde{\phi})\cdot \phi_{t}\psi'(t)\,dx dt\Big|
  + \Big|\int_I \int_{\Sph^1}|\phi_t|^2\psi(t)\,dx dt\Big|\\
 &\lesssim \delta^{\frac{1}{p}},
 \end{align*}
 and further, also keeping \eqref{eq:phitildephiproximity} in mind, we find
 \begin{align*}
 &\Big| \int_I \int_{\Sph^1}(\phi - \tilde{\phi})\cdot \big[(|\phi_t|^2 - |\phi_x|^2)\phi + a(x)\phi_t - \tilde{\phi}_{xx}\big]\psi(t)\,dx dt\Big|\\
 &\leq \big\|\phi - \tilde{\phi}\big\|_{L_{t,x}^\infty(S^1\times I)}\cdot \big[\big\|\nabla_{t,x}\phi\|_{L_t^\infty L_x^2}^2 + \big\|\phi_t\|_{L_t^\infty L_x^2} + \big\|\tilde{\phi}_{xx}\big\|_{L_x^2}\big]\\
 &\lesssim \delta^{\frac{1}{2p}}.
 \end{align*}
 The conclusion is that
 \begin{align*}
 \int_{\mathbb{R}} \|\phi_x - \tilde{\phi_x}\|_{L_x^2(\Sph^1)}^2\psi(t)\,dt\lesssim \delta^{\frac{1}{2p}}.
 \end{align*}
 This, when combined with the inequality \eqref{keyes1}, gives
 \begin{equation*}
     \int_{\mathbb{R}} \left(\|\phi_x - \tilde \phi_x\|_{L_x^2(\Sph^1)}^2\psi(t)+ \|\phi_t\|^2_{L^2_x(\mathbb{S}^1)}\right)\,dt\lesssim \delta^{\frac{1}{2p}},
 \end{equation*}
 which entails the existence of $t_{\delta}\in I$ with the property that
 \begin{equation}\label{eq:small1}
 \|\phi_t(t_{\delta}, \cdot)\|_{L_x^2(\Sph^1)}+ \|(\phi_x - \tilde \phi_x)(t_{\delta}, \cdot)\|_{L_x^2(\Sph^1)}\lesssim \delta^{\frac{1}{4p}}.
\end{equation}

Coming back to \eqref{eq:Fourierseries} and using \eqref{neqn0small}, we also easily obtain
\begin{align*}
\Big\|\tilde{\phi}(x) - \sum_{n = \pm n_{0}}a_n e^{inx}\Big\|_{H^1(S^1)}\lesssim \delta^{\frac{1}{2p}},
\end{align*}
and the discussion preceding the choice of $\gamma_{0}$ then entails that
\begin{align*}
    \|\phi(t_{\delta}, \cdot)- \gamma_0(\cdot)\|_{H^1(\Sph^1)}&\leq   \|\phi(t_{\delta}, \cdot)- \gamma_0(\cdot)\|_{L^2(\Sph^1)}+ \|\phi_x(t_{\delta}, \cdot)- \gamma_{0, x}(\cdot)\|_{L^2(\Sph^1)}, \\
    &= O(\delta^{\frac{1}{4p}})+ \|\tilde \phi_x(\cdot)- \gamma_{0, x}(\cdot)\|_{L^2(\Sph^1)}, \\
    &= O(\delta^{\frac{1}{4p}})+ \| \sum_{n = \pm n_{0}}i n a_n e^{inx}- \gamma_{0, x}(\cdot)\|_{L^2(\Sph^1)},\\
    &\lesssim \delta^{\frac{1}{4p}}.
\end{align*}
This, together with \eqref{eq:small1}, implies that there exists some effectively computable constant $c_0$ such that
\begin{align*}
\Big\|\phi[t_{\delta}] - (\gamma_{0}, 0)\Big\|_{H^1(\Sph^1)\times L^2(\Sph^1)}\leq c_0 \delta^{\frac{1}{4p}}.
\end{align*}
Hence, it suffices to set $\varepsilon_0>0$ in such a way that
\begin{equation}
    \left(\frac{\varepsilon_0}{c_0}\right)^{4p}\leq \min\{\delta_{s1}, \delta_{s2}\},
\end{equation}
and $$C_0:= \frac{1}{c_0^{4p}}$$ for the definition of $\delta_1(\varepsilon)$. This ends the proof of Proposition \ref{prop:23}.
\end{proof}

\subsection{Proof of Theorem \ref{thm-decreseenergy}: dissipation of the energy around  harmonic maps}
Armed with Theorem \ref{thm-converharm} we shall only deal with the wave maps solutions around harmonic maps, since otherwise one may add a localized damping force and let the solution dissipate towards some harmonic map.
One easily observes that in our framework the harmonic maps are not local minimizers of the energy, due to the simple geometry of the sphere. As proved in the preceding section, with the help of the damping term the unique solution of the wave maps equation becomes sufficiently close to some harmonic map. As a direct consequence, its energy also approximates $2\pi N^2$ for some integer $N$ and converges toward this value from above. In this section we show that with the help of some other well-designed control the energy of an ``approximate harmonic map" can be decreased and become strictly less than  $2\pi N^2$. Then, we are able to iterate the preceding procedure:   a damping control  forces the energy to dissipate toward $2\pi (N-1)^2$ or a lower value, which, when combined with another well-designed control, will become strictly less than $2\pi (N-1)^2$.\\

In order to simplify the notations, in the rest part of this section we only deal with  the simplest case: assume that the target is $\Sph^2$ and  that the initial  state $\phi[0]$ is close to the geodesic $\bar u[0]= (\bar u(x), 0)$ with
\begin{equation}\label{groundstate}
    \bar u(x):= (\cos{x}, \sin{x}, 0).
\end{equation}
\begin{remark}\label{remark-rotation}
 Actually, the wave maps equation, the damped wave maps equation as well as the inhomogeneous wave maps equation are invariant under  the action of the orthogonal group. More precisely, suppose that $(\phi, f)$ is a solution of the inhomogeneous wave maps equation,
\begin{equation} \notag
\begin{cases}
     \Box \phi= \left(|\phi_{t}|^2- |\phi_{x}|^2\right) \phi+  f^{\phi^{\perp}},  \\ (\phi, \phi_t)(0, x)= \phi[0],
\end{cases}
\end{equation}
then, for any matrix $A$ belongs to $O(3)$, the pair $(\bar \phi, \bar f):= (A \phi, Af)$ is also a solution of the inhomogeneous wave maps equation:
\begin{equation}\notag
\begin{cases}
     \Box \bar\phi= \left(|\bar\phi_{t}|^2- |\bar\phi_{x}|^2\right) \bar\phi+  \bar f^{\bar\phi^{\perp}},  \\ (\bar\phi, \bar\phi_t)(0, x)= A\phi[0].
\end{cases}
\end{equation}
One also observes that for every harmonic map $x\in \Sph^1\mapsto v(x) \in \mathbb{S}^2\subset \mathbb{R}^3$ having energy $2\pi N^2$, there exists an orthogonal matrix $A\in O(3)$ such that
\begin{equation*}
    A v(x)= \bar u(Nx), \; \forall x\in \mathbb{S}^1.
\end{equation*}
\end{remark}

\noindent {\bf A straightforward variational point of view.}

Recall the definition of the energy $E$: Equation  \eqref{def-energy}. Around the harmonic map state $\bar u[0]$ the controlled wave maps equation satisfies
\begin{align}\label{ener_directes}
    \frac{1}{2} \frac{d}{dt} E(t)&=  \frac{1}{2} \frac{d}{dt}  \int_{\Sph^1} |\phi_x|^2+ |\phi_t|^2 dx \notag \\
    &=\int_{\Sph^1}  \langle \phi_x, \phi_{xt} \rangle+ \langle \phi_t, \phi_{tt} \rangle dx \notag \\
    &= -\int_{\Sph^1} \langle \phi_t, f^{\phi^{\perp}}\rangle  dx.
\end{align}
Therefore, the first derivative of $E(t)$  at the point $\bar u[0]$ is zero. We continue by computing its second derivative. We get
\begin{align*}
     \frac{1}{2} \frac{d^2}{dt^2} E(t)&= - \int_{\Sph} \langle \phi_{tt}, \bar f\rangle+ \langle\phi_t, f_t\rangle  dx \\
     &= \int_{\Sph} \langle \bar f, \bar f- \phi_{xx}\rangle- \langle \bar f_t, \phi_t\rangle  dx,
\end{align*}
where $\bar f$ refers to $ f^{\phi^{\perp}}$.
Hence at the point $\bar u[0]$ there is $E''\geq 0$.  This naive variational observation seems  to be preventing us from getting local controllability around $\bar u[0]$.

However, from a geometric point view, by ignoring the fact that the flow has to satisfy the wave maps equation, one can easily construct a deformation that passes through the harmonic map. Indeed, this is a consequence of the fact that such a geodesic is not a local minimizer of the energy. Thus it becomes essential to understand whether it is the geometric structure of the wave maps equation that forms an obstruction.
To explore this question,  we first assume that the control is acting on the whole circle $\Sph^1$ (i.e. that $\omega=\Sph^1$) to see whether there exist solutions of the wave maps control system starting from $\bar u[0]$ whose energy goes below the energy of $\bar u[0]$. \\

\noindent {\bf Control of the energy around critical values: a special example with control acting   on the whole circle $\mathbb{S}^1$.}

It is natural to consider a symmetric trajectory of the form
\begin{equation*}
    \phi (t, x)= \left(\sqrt{1- \alpha^2(t)} \cos{x}, \sqrt{1- \alpha^2(t)} \sin{x}, \alpha(t)\right)^T,
\end{equation*}
where $\alpha(0)=\alpha'(0)=0$, as this can lead to a trajectory having energy strictly less than $2\pi$ for some positive time. It remains to see whether such a trajectory can be a flow of the controlled wave maps equation.

By replacing $\alpha(t)$ by $\sin \theta(t)$ one may also
describe the trajectory as
\begin{equation}\label{const-phi-full}
    \phi(t, x)= \left(\cos{\theta(t)} \cos{x}, \cos{\theta(t)} \sin{x}, \sin{\theta(t)}\right)^T.
\end{equation}
In this circumstance the control $f(t, x)$ can be chosen in forms of
\begin{equation*}
    f(t, x)= (g(t) \cos{x}, g(t) \sin{x}, h(t))^T,
\end{equation*}
which is  orthogonal to $\phi(t, x)$ provided that
\begin{equation*}
    g(t) \cos{\theta(t)}+ h(t) \sin{\theta(t)}=0.
\end{equation*}
Inspired by the preceding equation one may further restrict the choice of $f(t, x)$ as follows:
\begin{equation}\label{const-f-full}
    f(t, x)= (-w(t) \sin{\theta(t)} \cos{x}, -w(t) \sin{\theta(t)}\sin{x}, w(t) \cos{\theta(t)})^T.
\end{equation}

It remains to see  whether with an appropriate choice of $w(t)$ and $\theta(t)$ the above constructed pair $(\phi, f)$ as in \eqref{const-phi-full} and \eqref{const-f-full} is a solution of the controlled wave maps equation.
By simple calculation one obtains
\begin{equation*}
    \phi_t=
    \begin{pmatrix}
    -\sin{\theta} \theta_t \cos{x}\\
    -\sin{\theta} \theta_t \sin{x}\\
    \cos{\theta} \theta_t
    \end{pmatrix}, \;
    \phi_{tt}=
    \begin{pmatrix}
    -\left(\cos{\theta} \theta_t^2+ \sin{\theta} \theta_{tt}\right) \cos{x} \\
    -\left(\cos{\theta} \theta_t^2+ \sin{\theta} \theta_{tt}\right) \sin{x} \\
   -\sin{\theta} \theta_t^2+ \cos{\theta} \theta_{tt}
    \end{pmatrix},
\end{equation*}
\begin{equation*}
    \phi_x=
    \begin{pmatrix}
    -\cos{\theta} \sin{x}\\
    \cos{\theta} \cos{x}\\
    0
    \end{pmatrix}, \;
    \phi_{xx}=
    \begin{pmatrix}
    -\cos{\theta} \cos{x} \\
    -\cos{\theta} \sin{x} \\
    0
    \end{pmatrix},
\end{equation*}
which implies that
\begin{gather*}
    |\phi_t|^2- |\phi_x|^2= \theta_t^2- (\cos{\theta})^2.
\end{gather*}
Thus, $(\phi, f)$ is a trajectory of the controlled wave maps equation if and only if
\begin{equation*}
    \begin{pmatrix}
    \left(\cos{\theta} \theta_t^2+ \sin{\theta} \theta_{tt}\right) \cos{x}- \cos{\theta} \cos{x} - \left(\theta_t^2- (\cos{\theta})^2\right) \cos{\theta} \cos{x} \\
    \left(\cos{\theta} \theta_t^2+ \sin{\theta} \theta_{tt}\right) \sin{x}- \cos{\theta} \sin{x} - \left(\theta_t^2- (\cos{\theta})^2\right) \cos{\theta} \sin{x} \\
    -\cos{\theta} \theta_{tt}+ (\cos{\theta})^2 \sin{\theta}
    \end{pmatrix}
    =
    \begin{pmatrix}
    -w \sin{\theta} \cos{x} \\
    -w \sin{\theta} \sin{x} \\
    w \cos{\theta}
    \end{pmatrix},
\end{equation*}
which is further equivalent to
\begin{equation}
\begin{cases} \notag
\sin{\theta} (\theta_{tt}- \sin{\theta} \cos{\theta})= -w \sin{\theta}, \\
     -\cos{\theta} \theta_{tt}+ (\cos{\theta})^2 \sin{\theta}= w \cos{\theta}.
\end{cases}
\end{equation}
Therefore, it suffices to set
\begin{equation}
    w(t):= -\theta_{tt}+ \sin{\theta} \cos{\theta}= -\frac{1}{2}\left(2\theta_{tt}- \sin 2\theta    \right).\notag
\end{equation}

In conclusion we observe that for any given time dependent function $\theta(t)$ the pair $(\phi, f)$ given by
\begin{equation}\notag
    \phi(t, x)=
    \begin{pmatrix}
    \cos{\theta(t)} \cos{x} \\ \cos{\theta(t)} \sin{x} \\ \sin{\theta(t)}
    \end{pmatrix} \; \textrm{ and } \;
     f(t, x)=
    \begin{pmatrix}
   -w(t) \sin{\theta(t)} \cos{x} \\
   -w(t) \sin{\theta(t)}\sin{x} \\
   w(t) \cos{\theta(t)}
    \end{pmatrix}
\end{equation}
is a solution of the controlled wave maps equation
\begin{equation*}
    \Box \phi= (|\phi_t|^2- |\phi_x|^2)u+ f^{\phi^{\perp}}.
\end{equation*}
More importantly,  even with the constraint $\theta(0)= \theta'(0)=0$, for any $T>0$, there exists $\theta(t)$ such that $u[T]< 2\pi$. Indeed, since
\begin{equation*}
    E(t)= 2\pi \left(\theta_t^2+ (\cos{\theta})^2\right)(t),
\end{equation*}
it suffices to choose $\theta\in C^1([0,T];\R)$ such that  $\theta(0)=\theta'(0)=\theta'(T) =0$ and $\theta(T)\in (0,\pi)$.
In conclusion we have  constructed a radial solution starting at time $0$ from $((\cos(x),\sin(x))^T,(0,0)^T)$ whose energy at a given time $T>0$ is strictly less than $2\pi$.

The above construction of radial solutions indicates that the critical energy value $2\pi$ is not a local minimum value with respect to time for the controlled wave maps equation. It remains to understand  the system with localized control, namely with control which is supported on a maybe small non empty open subset $\omega$ of $\Sph^1$.\\

\noindent {\bf A power series expansion argument to decrease the energy.}
\\



Now, we are in position to prove Theorem \ref{thm-decreseenergy}, where it suffices to  show the following proposition concerning the simplest harmonic map $\bar u[0]$.  The proof is based on the so-called power series expansion method which is introduced in \cite{Coron-Crepeau} for the local exact control of KdV equations with critical length. (Note that this method can be used together with a change of time-scale in connection with the WKB method as shown in \cite{2022-Coron-Xiang-Zhang-JDE}.)

\begin{prop}\label{prop-pse-decay}
Let $T= 2\pi$. There exist some effectively computable $\varepsilon_0>0$, $\nu_0>0$, $C_0>0$,  and an explicit control $\bar f(t, x)$ compactly supported in $[0, T]\times \omega$ verifying
\begin{equation*}
  \|\bar f\|_{L^{\infty}_t L^2_x([0, T]\times \Sph^1)}\leq C_0,
\end{equation*}
such that for any $A\in O(3)$, for any initial state $u[0]$ verifying
\begin{equation}\label{cond-es-u0}
     \|u[0](x)- A \bar u[0] \|_{H^1_x\times L^2_x}\leq \nu_0,
\end{equation}
the unique solution of
\begin{equation*}
\begin{cases}
     \Box \phi= \left(|\phi_{t}|^2- |\phi_{ x}|^2\right) \phi+
    (A \bar f)^{\phi^{\perp}},\\ \phi[0](x)= u[0](x),
    \end{cases}
\end{equation*}
satisfies
\begin{equation}\label{es-per-2pi}
    E(T)\in( 2\pi- 10\varepsilon_0^2, 2\pi-  \varepsilon_0^2).
\end{equation}
\end{prop}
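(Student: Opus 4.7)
By the orthogonal invariance (Remark~\ref{remark-rotation}) I reduce to the case $A = I$, i.e.\ to initial data close to $\bar u[0]$ with $\bar u$ as in \eqref{groundstate}. I then implement the power series expansion method, looking for a control of the form
\begin{equation*}
\bar f(t,x) := \delta\,\chi(t)\,\psi(x)\,\nu, \qquad \nu := (0,0,1),
\end{equation*}
with $\chi \in C^\infty_c((0,2\pi))$, $\psi \in C^\infty_c(\omega)$ and $\delta > 0$ a small parameter to be fixed. Since $\nu \cdot \bar u(x) \equiv 0$, the vector $\bar f$ is already tangent to $\Sph^2$ along $\bar u$ and one checks $\bar f^{\phi^\perp} = \bar f + O(\delta^2)$. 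Expanding the solution issued from $\bar u[0]$ as $\phi = \bar u + \delta\phi_1 + O(\delta^2)$ and decomposing $\phi_1 = u(t,x)(-\sin x,\cos x,0) + w(t,x)\,\nu$, I find that at order $\delta$ the tangential component $u$ solves the free wave equation with zero data (so $u \equiv 0$), while the normal component $w$ satisfies the \emph{wrong-sign} Klein--Gordon equation
\begin{equation*}
w_{tt} - w_{xx} - w = -\chi(t)\psi(x), \qquad w(0,\cdot) = w_t(0,\cdot) = 0.
\end{equation*}

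The next step is to compute the energy to second order. From the energy identity \eqref{ener_directes} and the expansion above,
\begin{equation*}
E(T) - 2\pi = -2\delta^2 \int_0^T\!\!\int_{\Sph^1} w_t(t,x)\chi(t)\psi(x)\,dx\,dt + O(\delta^3).
\end{equation*}
I pass to the Fourier series $\psi = \sum_n \alpha_n e_n$ and solve the Duhamel representation mode-by-mode; the double integral becomes $-\sum_n \alpha_n^2 q_n$, where each $q_n$ depends only on $\chi$. For modes $n\geq 2$ the eigenvalue of $-\partial_x^2 - 1$ is positive and $q_n \geq 0$ is a sum of squares of oscillatory time-integrals, whereas the unstable zero-mode eigenvalue $-1$ produces, via the Green's function $\sinh(t-s)$,
\begin{equation*}
q_0 = \frac12 \left(\int_0^T \chi(t) e^t\,dt\right)\left(\int_0^T \chi(t) e^{-t}\,dt\right).
\end{equation*}
The heart of the argument is to choose $\chi,\psi$ so that the unstable 0-mode gives the dominant contribution. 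Since $1,\cos x,\sin x$ are linearly independent on any nonempty open subset of $\Sph^1$, I can pick $\psi \in C^\infty_c(\omega)$ with $\int \psi\cos x = \int \psi\sin x = 0$ but $\int\psi \neq 0$, killing the neutral modes $n = \pm 1$ and forcing $\alpha_0 \neq 0$. I then take $\chi$ as a smooth difference of two bumps near $t = 0$ and $t = T = 2\pi$ so that the two exponential integrals in $q_0$ have opposite signs; the factor $e^{2\pi}$ makes $|q_0|$ large relative to the uniformly bounded $q_n$ for $n\geq 2$, and combined with the rapid Fourier decay of the smooth $\psi$ this forces $\int w_t \chi\psi = c_* > 0$. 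Hence $E(T) - 2\pi = -2c_*\delta^2 + O(\delta^3)$ at the initial datum $\bar u[0]$.

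Finally, I fix $\delta$ small enough to absorb the remainder, set $\varepsilon_0^2 := c_*\delta^2/3$ (so $E(T) - 2\pi \in (-6\varepsilon_0^2,-4\varepsilon_0^2)$ at $\bar u[0]$) and $C_0 := \delta\|\chi\|_{L^\infty}\|\psi\|_{L^2}$. For $u[0]$ within $\nu_0$ of $\bar u[0]$, Lemma~\ref{lem-conti-dep-inh} delivers Lipschitz dependence of the solution at time $T$ on the initial datum with constant depending only on $T$ and $M$, hence Lipschitz dependence of $E(T)$; choosing $\nu_0$ sufficiently small keeps $E(T)$ in the advertised window $(2\pi - 10\varepsilon_0^2,2\pi - \varepsilon_0^2)$. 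The orthogonal invariance then extends the conclusion from $\bar u[0]$ to $A\bar u[0]$ with control $A\bar f$ for arbitrary $A \in O(3)$. The main obstacle I anticipate is the quantitative domination in the middle paragraph: at the fixed horizon $T = 2\pi$ one must verify explicitly that the unstable 0-mode strictly beats the sum of oscillatory contributions from modes $n\geq 2$, which requires a delicate balance between the endpoint concentration of $\chi$ (boosting $|q_0|$) and the regularity of $\psi$ (controlling the Fourier tail $\sum_{n\geq 2}\alpha_n^2 q_n$).
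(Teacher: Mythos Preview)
Your overall architecture matches the paper's: reduce to $A=I$ by orthogonal invariance, expand $\phi=\bar u+\delta\phi_1+O(\delta^2)$, observe that the first-order normal component solves the ``wrong-sign'' Klein--Gordon equation $\Box v+v=g$, read off the second-order energy change from \eqref{ener_directes}, and close by Lemma~\ref{lem-conti-dep-inh}. The divergence is in how you choose and analyze $g$.

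The paper does \emph{not} use a separated ansatz $\chi(t)\psi(x)$ or any mode decomposition. It invokes instead the exact controllability of the linear equation $\Box v+v=g$ in time $T=2\pi$ with control supported in $[0,T]\times\omega$: there is a $g$ steering $(v,v_t)$ from $(0,0)$ to the constant state $(-1,0)$. The payoff is that the second-order energy change is then an \emph{exact} computation, with no estimate at all: setting $F(t)=\int_{\Sph^1}(|v_x|^2+|v_t|^2-v^2)\,dx$ one has $\tfrac12 F'(t)=-\int v_t g\,dx$, hence
\[
-2\int_0^T\!\!\int_{\Sph^1} v_t\, g\,dx\,dt \;=\; F(T)-F(0) \;=\; \int_{\Sph^1}(0+0-1)\,dx - 0 \;=\; -2\pi,
\]
so $E(T)-2\pi=-2\pi\varepsilon^2+O(\varepsilon^3)$. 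This works for \emph{every} nonempty open $\omega$, because the controllability lemma does.

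The obstacle you flag is real, and for small $|\omega|$ it breaks your construction as written. With $\psi$ supported in $\omega$, Cauchy--Schwarz and Parseval give $\sum_{|n|\geq 2}|\alpha_n|^2\gtrsim(2\pi/|\omega|-1)\alpha_0^2$; since $|q_0|\sim\cosh(2\pi)$ is a fixed number and your only upper bound on $q_n$ is uniform, the domination $\alpha_0^2|q_0|>\sum_{|n|\geq 2}\alpha_n^2 q_n$ cannot be read off for $|\omega|$ small. (A side remark: with your $\chi$ a difference of two equal-mass bumps one already has $\int\chi=0$, hence $q_{\pm1}=0$, so the extra constraint $\alpha_{\pm1}=0$ on $\psi$ is unnecessary and only makes $\psi$ harder to build.) The missing idea is precisely to trade the explicit separated ansatz for the controllability lemma: allowing full $(t,x)$-dependence in $g$ lets you land on a terminal state with $F(T)=-2\pi<0$ by construction, and the delicate balance evaporates.
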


\begin{proof}[Proof of Proposition \ref{prop-pse-decay}]
To simplify the notations,  thanks to  Remark \ref{remark-rotation}, in this proof we only deal with the  case that $A= Id$, keeping in mind that 
the choice of $\varepsilon_0, \nu_0$ and $C_0$ is independent of the rotation $A$. More precisely, the proof is composed of two steps:  first, we assume that the initial state is exactly $\bar u[0]$ and, based on the power series expansion method, we construct an explicit control, $\bar f(t, x)$, to decrease the energy of the system below the critical level $2\pi$; then,  a standard perturbation argument based on Lemma \ref{lem-conti-dep-inh}  implies that for  initial state that  is sufficiently close to $\bar u[0]$ (even if the energy is strictly larger  than $2\pi$),  the preceding designed control $\bar f(t, x)$ still allow to  decreases the energy below the critical level $2\pi$.\\

\noindent {\bf Step 1. A power series expansion argument to dissipate the energy for harmonic maps.}
\\

\begin{prop}\label{prop_decrea}
Let $T= 2\pi$. There exist an effectively computable constant $\varepsilon_0>0$ and an  explicit function $f_1(t, x)$  supported in $[0, T]\times \omega$ satisfying
\begin{equation*}
    \|f_1\|_{L^{\infty}_t L^2_x([0, T]\times \Sph^1)}\leq C,
\end{equation*}
such that for any $\varepsilon\in (0, \varepsilon_0]$,
the unique solution of the inhomogeneous wave maps equation
\begin{equation}\label{eq_full}
\begin{cases}
    \Box \bar \phi= \left(|\bar\phi_{t}|^2- |\bar\phi_{ x}|^2\right)\bar \phi+
     (\varepsilon f_1)^{\bar\phi^{\perp}},\\
   \bar  \phi[0](x)= \bar u[0](x),
\end{cases}
\end{equation}
satisfies
\begin{equation}  \notag
    E(T)\in (2\pi- 3\pi \varepsilon^2, 2\pi- \pi \varepsilon^2).
\end{equation}
\end{prop}



\begin{proof}[Proof of Proposition \ref{prop_decrea}]
Considering that Equation \eqref{eq_full} is a semilinear equation with geometric constraint, we perform a {\it formal} power series expansion on $ \bar\phi$ and $f$:
\begin{align}
    \bar \phi&= \bar\phi_0+ \varepsilon \bar\phi_1+ \varepsilon^2 \bar\phi_2+... \;  \textrm{ and } \; f:= \varepsilon f_1,
\end{align}
and further denote
\begin{equation}
    \bar\phi_i= ( \bar\phi_i^1,  \bar\phi_i^2,  \bar\phi_i^3)^T \;  \textrm{ and } \; f_1= (0, 0, f_1^3)^T.
\end{equation}

For the zeroth order, we immediately get
\begin{equation}\label{eq_full_1st}
    \Box  \bar\phi_0= \left(| \bar\phi_{0t}|^2- | \bar\phi_{0x}|^2\right)  \bar\phi_0, \; ( \bar\phi_0,  \bar\phi_{0t})(0)= \bar u[0],
\end{equation}
thus $ \bar\phi_0[t]= \bar u[0]$.

For the first order, the equation of $ \bar\phi_1$ reads as
\begin{align}\label{eq:barphi1}
\begin{cases}
    \Box  \bar\phi_1&= \left(2  \bar\phi_{0t}  \bar\phi_{1t}- 2  \bar\phi_{0x}  \bar\phi_{1x}\right)u_0+ \left(| \bar\phi_{0t}|^2- | \bar\phi_{0x}|^2\right)  \bar\phi_1+ f_1- (f_1 \cdot  \bar\phi_0)  \bar\phi_0 \\
    &= - 2 ( \bar\phi_{0x} \cdot  \bar\phi_{1x})  \bar\phi_0-  \bar\phi_1+ f_1- (f_1 \cdot  \bar\phi_0)  \bar\phi_0, \\
     \bar\phi_1[0]&= (0, 0).
     \end{cases}
\end{align}
 Thanks to the choice of $f_1$ we know that
 \begin{equation}\notag
     \begin{cases}
      \Box  \bar\phi^1_1+  \bar\phi^1_1+ 2\left(-(\sin{x}) ( \bar\phi^1_1)_x+ (\cos{x}) ( \bar\phi^2_1)_x\right) \cos{x}= 0, \\
      \Box  \bar\phi^2_1+  \bar\phi^2_1+ 2\left(-(\sin{x})( \bar\phi^1_1)_x+ (\cos{x})( \bar\phi^2_1)_x\right) \sin{x}= 0, \\
      \Box  \bar\phi^3_1+  \bar\phi^3_1= f^3_1,
     \end{cases}
 \end{equation}
 thus $ \bar\phi^1_1(t)=  \bar\phi^2_1(t)= 0$. Concerning the third direction $ \bar\phi^3_1$, we recall the following classical lemma.
 \begin{lem}
 Let $T= 2\pi$. There exists a control $g\in L^{\infty}(0, T; L^2(\Sph^1))$ such that
 the unique solution $\bar v$ of the scalar wave equation
 \begin{equation}\label{eq_v0}
     \Box \bar v+ \bar v= g \textrm{ with } (\bar v, \bar v_t)(0)= (0, 0)
 \end{equation}
 satisfies
 \begin{equation}\label{eq_v1}
     (\bar v, \bar v_t)(T, x)= (-1, 0), \; \forall x\in \Sph^1.
 \end{equation}
 \end{lem}

 Armed with the preceding lemma, in the following we shall directly set $f^3_1:= g$ leading to
 \begin{equation} \notag
      \bar\phi_1^3(t, x)= \bar v(t, x), \;  \textrm{ thus }  \bar\phi^3_1[T]= (-1, 0).
 \end{equation}
 It is natural to  calculate the energy of the first two terms in the power series expansion
 \begin{equation} \notag
      (\bar\phi_0+ \varepsilon  \bar\phi_1)(t, x)= (\cos{x}, \sin{x}, \varepsilon \bar v(t, x))^T,
 \end{equation}
 which turns out to be exactly $2\pi$ at time $T$ (and it is even larger than $2\pi$ for some $t\in (0, 2\pi)$). Hence, it is not  clear  whether the energy of the full controlled system dissipates or not.
 Recall the energy estimate given by \eqref{ener_directes}:
 \begin{equation}  \notag
     E(T)- E(0)= -2\int_0^T \int_{\Sph^1}  \bar\phi_t \cdot (\varepsilon f_1)^{ \bar\phi^{\perp}} dx dt.
 \end{equation}
 Since
 \begin{align*}
      \bar\phi_t \cdot (\varepsilon f_1)^{ \bar\phi^{\perp}}&= ( \bar\phi_{0t}+ \varepsilon  \bar\phi_{1t}+ \varepsilon^2  \bar\phi_{2t}+...)\cdot \left(\varepsilon f_1- \langle \varepsilon f_1,  \bar\phi_0+ \varepsilon  \bar\phi_1+...\rangle (\phi_0+ \varepsilon \phi_1+...)\right) \\
     &= \varepsilon^2  \bar\phi_{1t}\cdot f_1 + O(\varepsilon^3) \\
     &= \varepsilon^2 \bar v_t \cdot g + O(\varepsilon^3),
 \end{align*}
 at least formally, we obtain
 \begin{equation}\label{ene-decay}
     E(T)- E(0)= -2\varepsilon^2 \int_0^T \int_{\Sph^1} \bar v_t   g dx dt+ O(\varepsilon^3).
 \end{equation}

 Together with the precise error estimates that will be proved later on,  it suffices to estimate the integral of $\bar v_t g$, which can be calculated by considering the scalar wave equation
\eqref{eq_v0}--\eqref{eq_v1}. Let
\begin{equation}
    F(t):= \int_{\Sph^1} (|\bar v_x|^2+ |\bar v_t|^2- \bar v^2)(t, x) dx, \;\forall t\in [0, T]. \notag
\end{equation}
 Then
 \begin{align*}
     \frac{1}{2} \frac{d}{dt} F(t)&=  \int_{\Sph^1} \bar v_x \bar v_{xt}+ \bar v_t \bar v_{tt}- \bar v \bar  v_t dx\\
     &= \int_{\Sph^1} \bar v_t (\bar v- g)-\bar v \bar v_t dx \\
     &= - \int_{\Sph^1} \bar v_t g dx,
 \end{align*}
 which implies that
 \begin{equation}  \notag
     -2 \int_0^T \int_{\Sph^1} \bar v_t g dx dt= F(T)- F(0)= -2\pi.
 \end{equation}
 As a direct consequence, using also \eqref{ene-decay}, we know that
 \begin{equation}
     E(T)- E(0)= -2\pi \varepsilon^2+ O(\varepsilon^3). \notag
 \end{equation}

Suppose that the formal energy estimate  \eqref{ene-decay} holds,  then we are able to find some effectively computable $\varepsilon_0$ such that for any $\varepsilon\in (0, \varepsilon_0]$ the unique solution of \eqref{eq_full} with $f:= \varepsilon (0, 0, g)^T$ satisfies
 \begin{equation}\label{decay-u0es}
     E(T)\in (2\pi- 3\pi \varepsilon^2, 2\pi- \pi \varepsilon^2).
 \end{equation}

 To finish the proof, it only remains to present the explicit error estimates on \eqref{ene-decay}.
Let us define the higher order remainder
\begin{equation}  \notag
    w:=  \bar \phi- \bar \phi_0- \varepsilon \bar \phi_1.
\end{equation}
First, thanks to Lemma \ref{lem:inhwm} and Lemma \ref{lem:freewave}, we have the following basic estimates on $\bar \phi, \bar \phi_0, \bar \phi_1$ and $w$ in the domain $D:= [0, T]\times \mathbb{S}^1$:
\begin{gather*}
    \|(\bar \phi, \bar \phi_0, \bar \phi_1, w)\|_{L^{\infty}_{t, x}}\lesssim 1, \\
    \|(\bar \phi_v, \bar \phi_{0v}, \bar \phi_{1v}, w_v)\|_{L^2_v L^{\infty}_u\cap L^{\infty}_u L^2_v}+  \|(\bar \phi_u, \bar \phi_{0u}, \bar \phi_{1u}, w_u)\|_{L^2_u L^{\infty}_v\cap L^{\infty}_v L^2_u}\lesssim 1.
\end{gather*}

In order to improve the estimates on $w$, we define $\bar \phi$, $\bar \phi_0$, and  $\bar \phi_1$ to be the solutions of the following Cauchy problems:
\begin{align*}
    \Box \bar \phi&= \left(\bar \phi_u\cdot \bar \phi_v\right) \bar \phi+ (\varepsilon f_1)^{\bar \phi^{\perp}}, \;\;   (\bar \phi, \bar \phi_t)(0)= \bar u[0], \\
    \Box \bar \phi_0&=  \left(\bar \phi_{0u}\cdot \bar \phi_{0v}\right) \bar \phi_0, \;\;   (\bar \phi_0, \bar \phi_{0t})(0)= \bar u[0], \\
  \Box \bar \phi_1&= \left(\bar \phi_{1u}\cdot \bar \phi_{0v}+ \bar \phi_{0u}\cdot \bar \phi_{1v} \right) \bar \phi_0- \bar \phi_1+ f_1- (f_1 \cdot \bar \phi_0) \bar \phi_0, \;\;  (\bar \phi_1, \bar \phi_{1t})(0)= (0, 0),
\end{align*}
Then  $w$ satisfies the following Cauchy problem:
\begin{equation}  \notag
    w[0]= (0, 0),
\end{equation}
\begin{align*}
    \Box w&= \Box \bar \phi- \Box \bar \phi_0 - \varepsilon \bar \phi_1\\
    &= \left(\bar \phi_u\cdot \bar \phi_v\right) \bar \phi+ (\varepsilon f_1)^{\bar \phi^{\perp}}-\left(\bar \phi_{0u}\cdot \bar \phi_{0v}\right) \bar \phi_0
    \\&-  \varepsilon \Big( \left(\bar \phi_{1u}\cdot \bar \phi_{0v}+ \bar \phi_{0u}\cdot \bar \phi_{1v} \right) \bar \phi_0- \bar \phi_1+ f_1- (f_1 \cdot \bar \phi_0) \bar \phi_0\Big) \\
    &= \langle \bar \phi_{0u}+\varepsilon \bar \phi_{1u}+ w_u, \bar \phi_{0v}+ \varepsilon \bar \phi_{1v}+ w_v\rangle (\bar \phi_0+ \varepsilon \bar \phi_1+ w)- \langle \bar \phi_{0u}, \bar \phi_{0v}\rangle \bar \phi_0\\
    &\;\;\;\;\;\;\;\;  -   \varepsilon \Big( \left(\bar \phi_{1u}\cdot \bar \phi_{0v}+ \bar \phi_{0u}\cdot \bar \phi_{1v} \right) \bar \phi_0+ (\bar \phi_{0u}\cdot \bar \phi_{0v}) \bar \phi_1\Big) \\
    &\;\;\;\;\;\;\;\; - \varepsilon \langle f_1, w+ \bar \phi_0+ \varepsilon \bar \phi_1\rangle (w+ \bar \phi_0+ \varepsilon \bar \phi_1)+ \varepsilon \langle f_1, \bar \phi_0\rangle \bar \phi_0 \\
    &= \Big(w_u\cdot w_v+ w_u\cdot (\bar \phi_{0v}+ \varepsilon \bar \phi_{1v})+ w_v \cdot (\bar \phi_{0u}+ \varepsilon \bar \phi_{1u})\Big) \bar \phi+ \langle \bar \phi_{0u}+ \varepsilon \bar \phi_{1u}, \bar \phi_{0v}+ \varepsilon \bar \phi_{1v}\rangle w \\
    &\;\;\;\;\;\;\;\;\;\;\;\; - \varepsilon (f_1\cdot \bar \phi) w- \varepsilon (f_1\cdot w) (\bar \phi_0+ \varepsilon \bar \phi_1)+ \varepsilon^2 \Big( (\bar \phi_{1u}\cdot \bar \phi_{1v}) \bar \phi_0+ (\bar \phi_{1u}\cdot \bar \phi_{0v}+ \bar \phi_{0u}\cdot \bar \phi_{1v}) \bar \phi_1
    \\
    &\;\;\;\;\;\;\;\;\;\;\;\;+ \varepsilon (\bar \phi_{1u}\cdot \bar \phi_{1v}) \bar \phi_1- (f_1\cdot \bar \phi_1) \bar \phi_0 - (f_1\cdot (\bar \phi_0+ \varepsilon \bar \phi_1)) \bar \phi_1\Big)\\
    &= N(w)+ R,
\end{align*}
where
\begin{align*}
    N(w)&:= \Big(w_u\cdot w_v+ w_u\cdot (\bar \phi_{0v}+ \varepsilon \bar \phi_{1v})+ w_v \cdot (\bar \phi_{0u}+ \varepsilon \bar \phi_{1u})\Big) \bar \phi+ \langle \bar \phi_{0u}+ \varepsilon \bar \phi_{1u}, \bar \phi_{0v}+ \varepsilon \bar \phi_{1v}\rangle w \\
    &\;\;\;\;\;\;\;\;\;\;\;\; - \varepsilon (f_1\cdot \bar \phi) w- \varepsilon (f_1\cdot w) (\bar \phi_0+ \varepsilon \bar \phi_1), \\
    R&:= \varepsilon^2 \Big( (\bar \phi_{1u}\cdot \bar \phi_{1v}) \bar \phi_0+ (\bar \phi_{1u}\cdot \bar \phi_{0v}+ \bar \phi_{0u}\cdot \bar \phi_{1v}) \bar \phi_1+ \varepsilon (\bar \phi_{1u}\cdot \bar \phi_{1v}) \bar \phi_1- (f_1\cdot \bar \phi_1) \bar \phi_0
    \\
    &\phantom{:= \varepsilon^2 \Big(}- (f_1\cdot (\bar \phi_0+ \varepsilon \bar \phi_1)) \bar \phi_1\Big).
\end{align*}

In analogy to the proof of Lemma \ref{lem-conti-dep-inh}, we use a bootstrap argument to estimate $w$. Recalling the notations of $Q_{T_1}$ and $\mathcal{W}_{T_1}$ given in  the proof of Lemma \ref{lem-conti-dep-inh},  thanks to Lemma \ref{lem:freewave}, we obtain
\begin{align*}
    \|w\|_{\mathcal{W}_{T_1}}&\lesssim \|w[0]\|_{H^1\times L^2}+ T_1^{\frac{1}{2}} \|N(w)+ R\|_{L^2_{t, x}(Q_{T_1})}, \\
    &\lesssim \|w[0]\|_{H^1\times L^2}+ T_1^{\frac{1}{2}} \Big( \|w\|_{\mathcal{W}_{T_1}}^2+ \|w\|_{\mathcal{W}_{T_1}}+ \varepsilon \|w\|_{\mathcal{W}_{T_1}}\Big)+ T_1^{\frac{1}{2}} \varepsilon^2, \\
     &\lesssim \|w[0]\|_{H^1\times L^2}+ T_1^{\frac{1}{2}}  \|w\|_{\mathcal{W}_{T_1}}+ T_1^{\frac{1}{2}} \varepsilon^2.
\end{align*}
Hence, by choosing $T_1$ small enough, we obtain
\begin{equation*}
    \|w\|_{\mathcal{W}_{T_1}}\lesssim \|w[0]\|_{H^1\times L^2}+  T_1^{\frac{1}{2}} \varepsilon^2.
\end{equation*}

By iterating this argument, we get
\begin{equation}
    \|w\|_{\mathcal{W}_{T}}\lesssim  \varepsilon^2.
\end{equation}

 Now we come back to the strict estimate for the variation of the energy. Observe that
 \begin{align*}
 & \;\;\;\;  \bar \phi_t \cdot f^{\bar \phi^{\perp}}- \varepsilon^2 \bar \phi_{1t}\cdot f_1^{\bar \phi_0^{\perp}} \\
 &= \bar \phi_t \cdot f^{\bar \phi^{\perp}}- \varepsilon^2 \bar \phi_{1t}\cdot f_1 \\
  &= (\bar \phi_{0t}+ \varepsilon \bar \phi_{1t}+ w_t)\cdot \Big(\varepsilon f_1- \langle \varepsilon f_1, \bar \phi_0+ \varepsilon \bar \phi_1+ w\rangle (\bar \phi_0+ \varepsilon \bar \phi_1+ w)\Big)-\varepsilon^2 \bar \phi_{1t}\cdot f_1 \\
  &= (\varepsilon \bar \phi_{1t}+ w_t)\cdot \Big(\varepsilon f_1- \langle \varepsilon f_1, \varepsilon \bar \phi_1+ w\rangle (\bar \phi_0+ \varepsilon \bar \phi_1+ w)\Big)-\varepsilon^2 \bar \phi_{1t}\cdot f_1 \\
  &=\varepsilon w_t\cdot f_1 - \langle \varepsilon f_1, \varepsilon \bar \phi_1+ w\rangle(\varepsilon \bar \phi_{1t}+ w_t)\cdot (\bar \phi_0+ \varepsilon \bar \phi_1+ w).
 \end{align*}
 Thus
 \begin{equation*}
     |\int_{\Sph^1} \bar \phi_t \cdot f^{\bar \phi^{\perp}}- \varepsilon^2 \bar \phi_{1t}\cdot f_1 dx|\lesssim \varepsilon^3 \|f_1\|_{L^2}+ \|\varepsilon f_1\|_{L^2} \varepsilon (\varepsilon^2+ \varepsilon \|\bar \phi_{1t}\|_{L^2})\lesssim \varepsilon^3,
 \end{equation*}
 for $\forall t\in [0, T]$.
 Therefore,
 \begin{equation*}
      |\int_0^T \int_{\Sph^1} \left( \bar \phi_t \cdot f^{\bar \phi^{\perp}}- \varepsilon^2 \bar \phi_{1t}\cdot f_1 \right)dx dt|\lesssim \varepsilon^3.
 \end{equation*}

In conclusion,
\begin{align}
     E(T)- E(0)&= -2\int_0^T \int_{\Sph^1} \bar \phi_t \cdot f^{\bar \phi^{\perp}} dx dt, \notag\\
     &= -2\varepsilon^2 \int_0^T \int_{\Sph^1} \bar \phi_{1t}\cdot f_1 dx dt+ O(\varepsilon^3) \label{es:strict:ET}\\
     &= -2\varepsilon^2 \int_0^T \int_{\Sph^1} \bar v_{t}\cdot g dx dt+ O(\varepsilon^3) \notag\\
     &= -2\pi \varepsilon^2+ O(\varepsilon^3). \notag
 \end{align}

 \end{proof}

\begin{remark}
It is noteworthy that  the energy is lower at time $T$ than at time $0$ since the geodesic is not a local minimiser (for the Dirichlet functional): this comes from the fact that the linearized system is $\Box v+ v=g$. On the other hand, if the geodesic were a local minimiser, then we would obtain a linearized system like $\Box v- v= g$, which would force the energy to increase.
\end{remark}

\noindent {\bf Step 2. Decrease of the energy near harmonic maps.}\\

Let $T= 2\pi$  and $f= \varepsilon_0 f_1$ as given in Proposition \ref{prop_decrea}. It is shown that the energy at time $T$ is strictly smaller than $2\pi$ if one starts from $\bar u[0]$ at time $0$; see Equation \eqref{decay-u0es}.
In this part, we perform a standard perturbation argument to show that for  initial states sufficiently close to $\bar u[0]$, the above designed control still decreases the energy  below the critical value $2\pi$.
\begin{equation*}
\begin{cases}
        \Box \phi= \left(|\phi_{t}|^2- |\phi_{ x}|^2\right) \phi+
     (\varepsilon_0 f_1)^{\phi^{\perp}}, \\
     (\phi, \phi_t)(0, x)= u[0](x).
     \end{cases}
\end{equation*}

By comparing the preceding equation with Equation \eqref{eq_full},
thanks to the continuous dependence property of the inhomogeneous wave maps equation, Lemma \ref{lem-conti-dep-inh}, the difference $w:= \phi- \bar \phi$ satisfies
\begin{gather*}
   \|(w_x, w_t, w)\|_{L^{\infty}_t L^2_x(D)}+  \|w_u\|_{L^{2}_u L^{\infty}_v\cap L^{\infty}_v L^{2}_u(D)}+  \|w_v\|_{L^{2}_v L^{\infty}_u\cap L^{\infty}_u L^{2}_v(D)}
   \leq C \lVert w[0]\lVert_{H^1\times L^2}.
\end{gather*}
In particular, there exists some effectively computable constant $\nu_0$ such that the energy estimate \eqref{es-per-2pi} holds provided the condition \eqref{cond-es-u0} is satisfied.  This finishes the proof of Proposition \ref{prop-pse-decay}, and further the proof of Theorem \ref{THM-globalexact}.

\end{proof}

\subsection{A remark on decreasing the energy around harmonic maps in small time.}
In this section, we continue the study of decreasing the energy of the system around harmonic maps. Recall that when the (spatial) controlled domain is the whole domain we are able to decrease the energy in an arbitrarily small time period, while if the (spatial) control domain is small the same  task can be fulfilled in a relatively large time $i.e.\; T=2\pi$.  It remains to understand the same problem for arbitrary control domain and in small time.

Let  $T>0$.  We restrict ourselves to the case where the control is small, which is equivalent to considering the control in the form
\begin{equation*}
    f(t, x)= \varepsilon f_1(t, x) \textrm{ with }  \|f_1\|_{L^{\infty}(0, T; L^2(\Sph^1))}\leq C_0
\end{equation*}
for some fixed positive constant $C_0$, where $f^1_1$ and $f^2_1$ are not necessarily zero.  Actually, for any given control such that $\|f\|_{L^{\infty}(0, T; L^2(\Sph^1))}\leq \varepsilon C_0$, it suffices to set
\begin{equation*}
    f_1(t, x):= \frac{1}{\varepsilon} f(t, x),
\end{equation*}
which satisfies
\begin{equation*}
    \|f_1\|_{L^{\infty}(0, T; L^2(\Sph^1))}\leq C_0.
\end{equation*}

Thanks to \eqref{eq:barphi1}, we know that  $\bar \phi_1$ satisfies
\begin{equation}\notag
     \begin{cases}
      \Box  \bar\phi^1_1+  \bar\phi^1_1+ 2\left(-(\sin{x}) ( \bar\phi^1_1)_x+ (\cos{x}) ( \bar\phi^2_1)_x+ f_1^1 \cos x+ f_1^2 \sin x\right) \cos{x}= f_1^1, \\
      \Box  \bar\phi^2_1+  \bar\phi^2_1+ 2\left(-(\sin{x})( \bar\phi^1_1)_x+ (\cos{x})( \bar\phi^2_1)_x+ f_1^1 \cos x+ f_1^2 \sin x\right) \sin{x}= f_1^2, \\
      \Box  \bar\phi^3_1+  \bar\phi^3_1= f^3_1,\\
      \bar \phi_1[0]= 0,
     \end{cases}
 \end{equation}
or, equivalently,
\begin{equation}\label{eq:linearbarphi1}
\begin{cases}
        -\bar \phi_{1tt}+ \bar \phi_{1xx}+ \bar \phi_1= -2 (\bar \phi_{0x}\cdot \bar \phi_{1x}) \bar \phi_0+ f_1- (f_1\cdot \bar \phi_0) \bar \phi_0,\\
        \bar \phi_1[0]= 0.
        \end{cases}
\end{equation}

 Moreover, according to the estimates between Equations \eqref{decay-u0es}--\eqref{es:strict:ET} (which remain the same for the general case when  $f^1_1$ and $f^2_1$ are non-zero functions), one has
 \begin{equation*}
     E(T)- E(0)= -2\varepsilon^2 \int_0^T \int_{\Sph^1} \bar \phi_{1t}\cdot \left(f_1-  (f_1\cdot \bar \phi_0) \bar \phi_0\right) dx dt+ O(\varepsilon^3).
 \end{equation*}

Let us  define
\begin{equation}
    \bar F(t):= \int_{\Sph^1} |\bar \phi_{1t}|^2+ |\bar \phi_{1x}|^2- |\bar \phi_1|^2 dx.
\end{equation}
Using Equation \eqref{eq:linearbarphi1}, one gets
\begin{align*}
    \frac{d}{dt} \bar F(t)&= 2 \int_{\Sph^1}  \bar \phi_{1t} \cdot \left( \bar \phi_{1tt}- \bar\phi_{1xx}- \bar \phi_1 \right) dx, \\
    &= 4\int_{\Sph^1} (\bar \phi_{0x}\cdot \bar \phi_{1x}) (\bar \phi_0\cdot \bar \phi_{1t})dx dt- 2\int_{\Sph^1}  ( \bar \phi_{1t}\cdot (f_1- (f_1\cdot \bar \phi_0) \bar \phi_0 ) )dx dt,
\end{align*}
which implies that
\begin{multline}
   \varepsilon^2 (\bar F(T)- \bar F(0)) = - 2\varepsilon^2 \int_0^T \int_{\Sph^1}  ( \bar \phi_{1t}\cdot (f_1- (f_1\cdot \bar \phi_0) \bar \phi_0 ) ) dx dt
   \\ + 4\varepsilon^2 \int_0^T \int_{\Sph^1} (\bar \phi_{0x}\cdot \bar \phi_{1x}) (\bar \phi_0\cdot \bar \phi_{1t})dx dt.
\end{multline}
We also notice that
\begin{align*}
    0&= \bar \phi \cdot \bar \phi_t,\\
    &= (\bar \phi_0+ \bar \varepsilon \phi_1+ w)\cdot(\bar \phi_{0t}+  \varepsilon \bar\phi_{1t}+ w_t), \\
    &= \varepsilon \bar \phi_0\cdot  \bar\phi_{1t}+  ( \varepsilon  \bar \phi_1+ w)\cdot \varepsilon \bar \phi_{1t}+  \bar \phi \cdot w_t.
\end{align*}
Hence
\begin{equation*}
\begin{array}{rcl}
\varepsilon \int_0^T \int_{\Sph^1} (\bar \phi_{0x}\cdot \bar \phi_{1x}) (\bar \phi_0\cdot \bar \phi_{1t})dx dt
  &=&  \int_0^T \int_{\Sph^1} (\bar \phi_{0x}\cdot \bar \phi_{1x}) (\bar \phi_0\cdot  \varepsilon\bar \phi_{1t})dx dt \\[4pt]
   &=& - \int_0^T \int_{\Sph^1} (\bar \phi_{0x}\cdot \bar \phi_{1x}) \left( ( \varepsilon  \bar \phi_1+ w)\cdot \varepsilon \bar\phi_{1t}+  \bar \phi \cdot w_t\right) dx dt \\[4pt]
   &=& O(\varepsilon^2),
\end{array}
\end{equation*}
and
\begin{equation*}
      \varepsilon^2 (\bar F(T)- \bar F(0)) = - 2\varepsilon^2 \int_0^T \int_{\Sph^1}  ( \bar \phi_{1t}\cdot (f_1- (f_1\cdot \bar \phi_0) \bar \phi_0 ) ) dx dt+ O(\varepsilon^3).
\end{equation*}

The preceding equations imply that
\begin{equation}
    E(T)- E(0)=  \varepsilon^2 (\bar F(T)- \bar F(0))+  O(\varepsilon^3).
\end{equation}

 Therefore, it becomes a problem on decreasing the energy of $\bar \phi_1$ which is governed by a  linear controlled equation: Equation \eqref{eq:linearbarphi1}.  In the following proposition, a  ``non-trivial control" $f_1\in L^{\infty}(0, T; L^2(\Sph^1))$ infers to some control $f_1$ such that the unique solution of  \eqref{eq:linearbarphi1}  satisfies $\bar \phi_1[T]\neq 0$.
 \begin{prop}\label{prop:smalltimecontrol}
     {\it (i)}  Suppose that the controlled domain is $(-a, a)$ with $a< \frac{\pi}{2}$, then for any non-trivial control $f_1\in L^{\infty}(0, T; L^2(\Sph^1))$ with $T< (\pi/2)-a$, the unique solution of \eqref{eq:linearbarphi1} satisfies $\bar F(T)>0$.

    {\it (ii)}  Suppose that the controlled domain is $(-a, a)$ with $a> \frac{\pi}{2}$, then, for any $T>0$, there exists a  non-trivial control $f_1\in L^{\infty}(0, T; L^2(\Sph^1))$  such that the unique solution of \eqref{eq:linearbarphi1} satisfies $\bar F(T)<0$.
 \end{prop}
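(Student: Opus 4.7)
The plan is to reduce \eqref{eq:linearbarphi1} to two decoupled scalar equations and read off the sign of $\bar F(T)$ from each piece. First, I check that the linearized constraint $\bar\phi_0\cdot\bar\phi_1=0$ is preserved by \eqref{eq:linearbarphi1}: taking the inner product of the equation with $\bar\phi_0$ and using $\bar\phi_{0t}=0$, $\bar\phi_{0xx}=-\bar\phi_0$, one finds $\Box(\bar\phi_0\cdot\bar\phi_1)+2(\bar\phi_0\cdot\bar\phi_1)=0$, so the zero initial data propagate. Consequently $\bar\phi_1$ admits the decomposition
\begin{equation*}
\bar\phi_1(t,x)=v(t,x)\,e_x(x)+\bar\phi_1^3(t,x)\,e_3,\qquad e_x(x):=(-\sin x,\cos x,0)^T,\ e_3:=(0,0,1)^T,
\end{equation*}
where $v:=-\sin x\,\bar\phi_1^1+\cos x\,\bar\phi_1^2$. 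Inserting this ansatz into \eqref{eq:linearbarphi1} and using $(e_x)_x=-\bar\phi_0^{(12)}$, $(\bar\phi_0^{(12)})_x=e_x$, projection onto $e_x$ and onto $e_3$ yields the two decoupled scalar equations
\begin{equation*}
\Box v=-\sin x\,f_1^1+\cos x\,f_1^2,\qquad \Box\bar\phi_1^3+\bar\phi_1^3=f_1^3,
\end{equation*}
both with zero initial data and right-hand side supported in $(-a,a)$. A direct computation using $|\bar\phi_{1x}^{(12)}|^2=v_x^2+v^2$ gives the clean splitting
\begin{equation*}
\bar F(T)=\|v_t(T)\|_{L^2}^2+\|v_x(T)\|_{L^2}^2+\bar F^3(T),\quad \bar F^3(T):=\|(\bar\phi_1^3)_t(T)\|_{L^2}^2+\|(\bar\phi_1^3)_x(T)\|_{L^2}^2-\|\bar\phi_1^3(T)\|_{L^2}^2.
\end{equation*}

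For part (i), the hypothesis $T<\pi/2-a$ combined with the finite speed of propagation for $\Box$ and $\Box+1$ places the supports of $v(T,\cdot)$ and $\bar\phi_1^3(T,\cdot)$ inside $[-a-T,a+T]$, an arc strictly contained in $(-\pi/2,\pi/2)$. On the interval $(-\pi/2,\pi/2)$ the first Dirichlet eigenvalue of $-\partial_{xx}$ equals $1$ and is attained only by $\cos x$; a nonzero function with compact support strictly inside $(-\pi/2,\pi/2)$ is therefore never a multiple of $\cos x$, so the Poincaré inequality there is strict: $\int u_x^2>\int u^2$. Applied to $\bar\phi_1^3(T,\cdot)$ this gives $\bar F^3(T)\geq 0$, with strict inequality whenever $\bar\phi_1^3[T]\neq 0$ (the subcase $\bar\phi_1^3(T,\cdot)\equiv 0$ and $(\bar\phi_1^3)_t(T,\cdot)\neq 0$ yields $\bar F^3(T)>0$ directly). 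The same support argument shows that $v[T]\neq 0$ forces $\|v_t(T)\|^2+\|v_x(T)\|^2>0$. Since $\bar\phi_1[T]\neq 0$ means $v[T]\neq 0$ or $\bar\phi_1^3[T]\neq 0$, I conclude $\bar F(T)>0$.

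For part (ii), with $a>\pi/2$, I take $f_1^1=f_1^2\equiv 0$ so that $v\equiv 0$ and $\bar F(T)=\bar F^3(T)$. The first Dirichlet eigenvalue of $-\partial_{xx}$ on $(-a,a)$ equals $(\pi/(2a))^2<1$, so by density of $C_c^\infty(-a,a)$ in $H_0^1(-a,a)$ there exists $\psi\in C_c^\infty(-a,a)$ with $\int\psi_x^2<\int\psi^2$. Pick $\alpha\in C^\infty([0,T])$ with $\alpha(0)=\alpha'(0)=0$, $\alpha(T)=1$, $\alpha'(T)=0$, and set
\begin{equation*}
\bar\phi_1^3(t,x):=\alpha(t)\psi(x),\qquad f_1^3(t,x):=-\alpha''(t)\psi(x)+\alpha(t)\bigl(\psi_{xx}(x)+\psi(x)\bigr).
\end{equation*}
Then $f_1^3\in L^\infty(0,T;L^2(\Sph^1))$ is supported in $(-a,a)$, the unique solution of \eqref{eq:linearbarphi1} driven by $f_1=(0,0,f_1^3)^T$ is $\bar\phi_1=(0,0,\alpha\psi)^T$ (nontrivial since $\bar\phi_1^3(T,\cdot)=\psi\neq 0$), and
\begin{equation*}
\bar F(T)=\bar F^3(T)=\alpha(T)^2\Bigl(\int_{\Sph^1}\psi_x^2\,dx-\int_{\Sph^1}\psi^2\,dx\Bigr)<0.
\end{equation*}

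The main obstacle is conceptual rather than computational: one must notice that the linearized wave maps equation around $\bar u$ splits into a standard wave equation in the tangential direction (whose contribution to $\bar F$ is nonnegative and therefore irrelevant) and a single tachyonic Klein--Gordon equation in the normal direction, on which the threshold $a=\pi/2$ is precisely the threshold at which the Poincaré inequality for the first Dirichlet eigenvalue changes sign. Once this decoupling is in place, both parts reduce to elementary one-dimensional spectral facts, and the small-time construction in (ii) bypasses controllability theory entirely through the separated ansatz $\alpha(t)\psi(x)$.
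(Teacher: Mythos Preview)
Your proof is correct. For part (ii) it coincides with the paper's argument: both construct $\bar\phi_1^3(t,x)=\alpha(t)\psi(x)$ (the paper writes $b(t)\varphi_1(x)$) with $\psi\in C_c^\infty(-a,a)$ a mollification of the first Dirichlet eigenfunction on an interval of length strictly larger than $\pi$, and set $f_1^1=f_1^2=0$.

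For part (i) the paper takes a shorter route. It does not decompose $\bar\phi_1$ into tangential and normal components; instead it observes directly, via a light-cone energy inequality for the full linearized system, that finite propagation speed confines the support of $\bar\phi_1(T,\cdot)$ to an interval of length $a_0<\pi$, and then applies the Poincar\'e inequality componentwise to obtain $\|\bar\phi_{1x}(T)\|_{L^2}^2\geq(\pi/a_0)^2\|\bar\phi_1(T)\|_{L^2}^2>\|\bar\phi_1(T)\|_{L^2}^2$, hence $\bar F(T)>0$ whenever $\bar\phi_1(T)\neq 0$. Your decomposition $\bar\phi_1=v\,e_x+\bar\phi_1^3\,e_3$ first requires checking that the linearized constraint $\bar\phi_0\cdot\bar\phi_1\equiv 0$ propagates (which you do correctly), and then yields the splitting of $\bar F$ into a manifestly nonnegative $v$-contribution and the tachyonic piece $\bar F^3$. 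This buys conceptual clarity: it isolates the normal direction as the only one where the sign of $\bar F$ is in question and makes transparent why $a=\pi/2$ is exactly the spectral threshold. The paper's version is more economical, since Poincar\'e on the full vector $\bar\phi_1$ handles all three components at once without any preliminary reduction.
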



 \begin{proof}[Proof of Proposition \ref{prop:smalltimecontrol}]

     (i) By the  ``non-trivial control"  assumption, we know that either $\bar \phi_1(T)\neq 0$, or  $\bar \phi_1(T)= 0$ and $\bar \phi_{1t}(T)\neq 0$.  If it is the latter case,  $i. e, \; \bar \phi_1(T)= 0$ and $\bar \phi_{1t}(T)\neq 0$, then we immediately get $\bar F(T)>0$.  It remains to consider the case where  $\bar \phi_1(T)\neq 0$.

    Thanks to the finite speed of propagation, we know that $\phi_1[t]$ remains to be zero in the light cone

    \begin{equation*}
    x\in (a+ t, 2\pi-a-t)=: D(t) \textrm{ for any } t\in \left[0, T\right].
    \end{equation*}
    Indeed, it suffices to consider the energy inside this light cone:
    \begin{equation*}
    E^l(t):= \int_{D(t)} |\bar \phi_{1t}|^2+ |\bar \phi_{1x}|^2+ |\bar \phi_{1}|^2 dx,
    \end{equation*}
    which, thanks to direct energy estimates, satisfies
    \begin{equation*}
        \frac{d}{dt} E^l(t)\lesssim E^l(t).
    \end{equation*}

Therefore, since $T< (\pi/2)-a$, there exists $\delta\in (0,\pi/2)$ such that $\bar \phi_1(T, x)= 0$ for every $x\in [(\pi/2)-\delta, (3\pi/2)+ \delta]$.
Let us define  $a_0:= \pi- 2\delta>0$. Note that
\begin{equation}
\label{ineqPoincare}
    \int_{0}^{a_0} |f'(x)|^2 dx\geq \frac{a_0^2}{\pi^2}\int_{0}^{a_0} |f(x)|^2 dx, \; \forall f(x)\in H^1_0(0, a_0).
\end{equation}
Indeed, every function $f(x)$ from $H^1_0(0, a_0)$ can be written as
\begin{equation*}
    f(x)= \sum_{n\in \mathbb{N}^*} f_n \sin \left(\frac{n\pi x}{a_0}\right).
\end{equation*}
One has
\begin{align*}
    \int_{0}^{a_0} |f(x)|^2 dx&= \sum_{n\in \mathbb{N}^*} f_n^2 \frac{a_0}{2},\\
    \int_{0}^{a_0} |f'(x)|^2 dx&= \sum_{n\in \mathbb{N}^*} f_n^2 \left(\frac{n\pi}{a_0}\right)^2 \frac{a_0}{2}.
\end{align*}
which gives \eqref{ineqPoincare}.

Note that the support of $ \bar \phi_1(T)$ is included in the interval $(-a_0/2, a_0/2)$, which is of length $a_0<\pi$. Together with \eqref{ineqPoincare}, this implies that
\begin{equation*}
    \bar F(T)\geq \left(1- \frac{a_0^2}{\pi^2}\right) \|\bar \phi_{1x}(T)\|_{L^2(\Sph^1)}^2>0.
\end{equation*}
This concludes the proof of Statement (i) in Proposition \ref{prop:smalltimecontrol}.

(ii) This part is based on an explicit construction.  Let $a_1:= a+ (\pi/2)$.
We define the $2\pi$-periodic function $\varphi_0(x)$ as
\begin{equation}
    \varphi_0(x):=
    \begin{cases}
      \cos (\frac{\pi x}{a_1}), \; \forall x\in (- \frac{a_1}{2},    \frac{a_1}{2}), \\
        0, \textrm{ elsewhere}.
    \end{cases}
\end{equation}
This function $\varphi_0$ satisfies
\begin{equation*}
    \int_{\Sph^1} ((\varphi_{0x})^2- (\varphi_{0})^2) (x) dx =
 \frac{a_1}{2}\left( \frac{\pi^2}{a_1^2}- 1 \right) < 0.
\end{equation*}
By a standard mollifying procedure, one obtains a smooth $2\pi$-periodic function $\varphi_{1}$, supported in $(-a, a)$, such that
\begin{equation*}
    \int_{\Sph^1} ((\varphi_{1x})^2- (\varphi_{1})^2 )(x) dx  < 0.
\end{equation*}

For any given $T>0$, we can find an explicit time-dependent function $b(t)$ of class $C^2$ such that
\begin{equation}
    b(0)=  b'(0)= 0, b(T)= 1,  b'(T)= 0.
\end{equation}
Now, we find an explicit trajectory $(\bar \phi, f_1)$ of the controlled equation \eqref{eq:linearbarphi1}: for any $t\in [0, T]$ and any $x\in \Sph^1$,
\begin{gather*}
    \phi_1^1(t, x)= \phi_1^2(t, x)= 0, \\
    \phi^3_1(t, x)= b(t) \varphi_1(x), \\
    f_1^1(t, x)= f_1^2(t, x)= 0, \\
    f^3_1(t, x)= \left(-(\bar \phi^3_1)_{tt}+ (\bar \phi^3_1)_{xx}+ \bar \phi^3_1\right)(t, x).
\end{gather*}
This trajectory, which starts from $(0,0)$ at time $0$,  is supported in $[0,T]\times(-a, a)$, and satisfies
\begin{equation*}
    \bar F(T)- \bar F(0)<0.
\end{equation*}
This concludes the proof of Statement (ii) in Proposition \ref{prop:smalltimecontrol}.
 \end{proof}

\section{Sharp control time: $\mathbb{S}^1$-target case}\label{sec-op-contime}

In this section we focus on the wave maps equation for $\mathbb{S}^1\rightarrow \mathbb{S}^1$. Since, for every  solution  $\phi$ of the wave equation \eqref{eq:inhomowavemaps},  $\phi(0)$ is homotopic to   $\phi(T)$, their degree (simply the rotation number) must   coincide. \\

\noindent {\bf (1)}  We first try to control between  states having {\it zero degree}. 
As usual we characterize the function $\phi$ by the polar coordinate, $$\phi(t, x)= (\cos{\theta}, \sin{\theta})^{T} \; \textrm{ with } \theta= \theta(t, x)\in \mathbb{R},$$
as well as the control term
$$f^{\phi^{\perp}}(t, x)= h(t, x) (-\sin{\theta}, \cos{\theta})^T \;  \textrm{ with $h(t, x)\in \mathbb{R}$ and supp } h \subset [0, T]\times  \omega.$$
Because $\phi(t, x)$ is continuous with respect to the variables $t$ and $x$, the function $\theta(t, x)$ can also chosen to be continuous. Meanwhile, since $\phi(t)$ has zero degree, we have
\begin{equation}
    \theta(t, 0)= \theta(t, 2\pi),\; \forall t\in [0, T].
\end{equation}

Direct calculation yields
\begin{gather*}
    \phi_t= (-\sin{\theta}, \cos{\theta})^T \theta_t, \;\;  \phi_x= (-\sin{\theta}, \cos{\theta})^T \theta_x,\\
     \phi_{tt}= (-\cos{\theta}, -\sin{\theta})^T (\theta_t)^2+ (-\sin{\theta}, \cos{\theta})^T \theta_{tt},\\
    \phi_{xx}= (-\cos{\theta}, -\sin{\theta})^T (\theta_x)^2+ (-\sin{\theta}, \cos{\theta})^T \theta_{xx}.
\end{gather*}
Thus,  the controlled wave maps equation  \eqref{eq:inhomowavemaps}  becomes the controlled linear wave equation on $\mathbb{T}= \mathbb{R}/2\pi \mathbb{Z}$:
\begin{equation}\label{wave_control}
    \Box\theta = h  \; \textrm{ with control }  h \textrm{ satisfying supp } h \subset [0,T]\times \omega.
\end{equation}
According to the classical control theory on 1-D wave equations, this system is exact controllable within any time $T>T_0$, where the value of $T_0$ is given by \eqref{Optimal-control-time} and is optimal for this controllability property.\\

\noindent {\bf (2)}  Next, we turn to consider   states with an arbitrary given degree $N\in \mathbb{Z}$, namely, the polar coordinate satisfies
\begin{equation*}
    \theta(t, 2\pi)= \theta(t, 0)+ 2\pi N, \;\forall t\in [0, T].
\end{equation*}
Under the polar coordinate, the wave maps control problem  turns out to be a boundary control problem for $\theta$ on the interval $[0, 2\pi]$:
\begin{equation}
\begin{cases}
        \Box\theta(t, x) = h(t, x), \;\; \textrm{ supp } h \subset [0,T]\times \omega,\\
        \theta(t, 2\pi)= \theta(t, 0)+ 2\pi N, \\
        \theta_x(t, 2\pi)= \theta_x(t, 0).
\end{cases}
\end{equation}
By considering instead the function,
\begin{equation*}
\bar \theta(t, x)= \theta(t, x)- Nx, \; \forall t\in [0, T],\; \forall x\in [0, 2\pi],
\end{equation*}
again, it satisfies the controlled wave equation \eqref{wave_control} in $\mathbb{T}$. Hence, the  system is exactly controllable between states having the same degree within every time $T> T_0$, where the value of $T_0$ is given by \eqref{Optimal-control-time} and is optimal for this controllability property.
This completes the proof of Theorem \ref{THM-optimaltime-S1}.

\appendix

\section{Proof of Proposition \ref{lem:wellclosegene} }\label{Sec:App:A}

\begin{proof}[Proof of Proposition \ref{lem:wellclosegene}]
    Recall the basic energy estimate 
    \begin{equation*}
       \frac{d}{dt} E(t)= -2 \int_{\mathbb{S}^1} \phi_t(t, x)\cdot F(t, \phi(t, \cdot), \phi_t(t, \cdot))(x)^{\phi(t, x)^{\perp}} dx. 
    \end{equation*}
    This implies that the function $b(t):= \left(E(t)\right)^{1/2}, \forall t\in D(\phi, \phi_t)$ satisfies 
    \begin{equation*}
        \dot b(t)\leq  C_B(b(t)), \forall t\in D(\phi, \phi_t).
    \end{equation*}
In the following, we only present the proof of Property (i), while  Property (ii) is a direct consequence of Property (i) and the preceding energy estimates.\\ 

First, we prove the first part of Property (i) concerning the well-posedness of the closed-loop system in a small time. Assume that $T_1= 0$. Define $T_0(R)$ as 
\begin{equation*}
    T_0(R):= \frac{R}{C_B(2R)+ 1},
\end{equation*}
where the constant $C_B(2R)$ is given in the condition ($\mathcal{P}1$).
Then, \textit{a priori}, if there is a  solution $(\phi, \phi_t)$ on $[0, T_0(R)]$, then it satisfies 
\begin{equation*}
    \|(\phi, \phi_t)(t, \cdot)\|_{H^1\times L^2}\leq 2R, \forall t\in [0, T_0(R)].
\end{equation*}
Otherwise one may select 
\begin{equation*}
    t_0:= \inf \{t:  \|(\phi, \phi_t)(t, \cdot)\|_{H^1\times L^2}= 2R\}< T_0(R).
\end{equation*}
By the  choice of $t_0$, we know that 
\begin{gather*}
    b(t)=  \|(\phi, \phi_t)(t, \cdot)\|_{H^1\times L^2}< 2R, \forall t\in [0, t_0], \\
     b(t_0)=   \|(\phi, \phi_t)(t_0, \cdot)\|_{H^1\times L^2}= 2R, \\
       \dot b(t)\leq C_B(b(t))\leq C_B(2R), \forall t\in [0, t_0].
\end{gather*}
Thus,
\begin{equation*}
    b(t_0)\leq b(0)+ t_0 C_B(2R)< 2R.
\end{equation*}
This leads to a contradiction. 

Let $T\in (0, T_0(R)]$ that will be chosen later on.   Define 
\begin{equation*}
    \mathcal{B}_T:= \{(\phi_0, \phi_1)\in C([0, T]; H^1\times L^2(\mathbb{S}^1; T\mathbb{S}^{k})): \|(\phi, \phi_t)\|_{\dot H^1\times L^2}\leq 2R\},
\end{equation*}
which can be regarded as a closed subset of the Banach space $C([0, T]; H^1\times L^2(\mathbb{S}^1; \mathbb{R}^{k+1}))$. 
We consider a map $\mathcal{S}_T$ from $\mathcal{B}_T$ to $C([0, T]; H^1\times L^2(\mathbb{S}^1; T\mathbb{S}^{k}))$  as follows:
\begin{align*}
  \mathcal{S}_T:   \mathcal{B}_T\rightarrow C([0, T]; H^1\times L^2(\mathbb{S}^1; T\mathbb{S}^k)), \\
  (\phi_0, \phi_1)\mapsto \mathcal{S}_T(\phi_0, \phi_1),
\end{align*}
where $\mathcal{S}_T(\phi_0, \phi_1)$ is the unique solution of the inhomogeneous wave maps equation \eqref{eq:cauchyinhomowavemaps} with  initial state $(g_0, g_1)$ and control term $f(t, x):= F(t, \phi_0(t, \cdot), \phi_1(t, \cdot))(x)$. 
Thanks to the choice of $T_0$, we know that 
\begin{equation*}
    \mathcal{S}_T(\phi_0, \phi_1)\in \mathcal{B}_T.
\end{equation*}
We also know that 
\begin{equation*}
    \|F(t, \phi_0(t, \cdot), \phi_1(t, \cdot))(x)\|_{L^2_x(\mathbb{S}^1)}\leq C_B(2R), \forall t\in [0, T].
\end{equation*}
It suffices to show that for a good choice of $T$ the map $\mathcal{S}_T$ is a contraction. Denote the region $[0, T]\times \mathbb{S}^1$ by $D$. Taking $(\phi_0, \phi_1)$ and $(\tilde \phi_0, \tilde \phi_1)$ from the set $\mathcal{B}_T$, we define $\phi$ and $\tilde\phi$ as the unique solutions of the inhomogeneous wave maps equations
\begin{gather*}
    \Box \phi= \left(|\phi_{t}|^2- |\phi_{x}|^2\right) \phi+  F_0^{\phi^{\perp}}, \\ 
    (\phi, \phi_t)(0, x)= (g_0, g_1)(x),\\
    F_0(t, x):= F(t, \phi_0(t, \cdot), \phi_1(t, \cdot))(x)
\end{gather*}
and
\begin{gather*}
    \Box \tilde\phi= \left(|\tilde\phi_{t}|^2- |\tilde\phi_{x}|^2\right) \tilde\phi+  \tilde F_0^{\tilde\phi^{\perp}},  \\
    (\tilde\phi, \tilde\phi_t)(0, x)= (g_0, g_1)(x),\\
    \tilde F_0(t, x):= F(t, \tilde \phi_0(t, \cdot), \tilde \phi_1(t, \cdot))(x).
\end{gather*}
In other words, 
\begin{equation*}
    (\phi, \phi_t):= \mathcal{S}_T((\phi_0, \phi_1)) \textrm{ and } (\tilde \phi, \tilde \phi_t):= \mathcal{S}_T((\tilde \phi_0, \tilde \phi_1)). 
\end{equation*}
Since 
\begin{equation}
    \lVert \phi[0]\lVert_{\dot{H}^1\times L^2}+ \lVert \tilde \phi[0]\lVert_{\dot{H}^1\times L^2}+ \lVert F_0\lVert_{L^2_{t, x}(D)}+ \lVert \tilde F_0\lVert_{L^2_{t, x}(D)}\leq  M= 2R+ 2\sqrt{T_0(R)} C_B(2R),
\end{equation}
according to Lemma \ref{lem-conti-dep-inh}, there exists some effectively computable constant $C_{cd}(R)$ only depending on the value of $R$, such that, 
\begin{align*}
   &\;\;\;\; \|\mathcal{S}_T((\phi_0, \phi_1))-  \mathcal{S}_T((\tilde \phi_0, \tilde \phi_1))\|_{C([0, T]; H^1\times L^2(\mathbb{S}^1; \mathbb{R}^{k+1}))}\\
    &\leq C_{cd}(R) \|F_0- \tilde F_0\|_{L^2_{t, x}(D)}, \\
    &\leq C_{cd}(R) K(2R) T^{\frac{1}{2}} \|(\phi_0, \phi_1)-  (\tilde \phi_0, \tilde \phi_1)\|_{C([0, T]; H^1\times L^2(\mathbb{S}^1; \mathbb{R}^{k+1}))}.
\end{align*}
Therefore, by choosing 
\begin{equation*}
   T= T(R):= \min \left\{T_0(R), \left(\frac{1}{2 C_{cd}(R) K(2R) }\right)^2\right\},
\end{equation*}
we conclude from Banach fixed point theorem that the map $\mathcal{S}_T$  admits  a  unique fixed point in $\mathcal{B}_T$. This function is indeed the unique solution of the Cauchy problem \eqref{eq:cauchyclosedloop} in $[0, T(R)]$.\\

Next, we show the second property of (i), the continuous dependence of the solutions of the closed-loop system.  Assume that $T_1= 0$. Suppose that $(\phi, \phi_t)$ is the unique solution of the Cauchy problem \eqref{eq:cauchyclosedloop} with initial state $(g_0, g_1)$ and $(\tilde \phi, \tilde \phi_t)$ is the unique solution of the Cauchy problem \eqref{eq:cauchyclosedloop} with initial state $(\tilde g_0, \tilde g_1)$. Thanks to the first part of Property (i), we have 
\begin{gather*}
    \|(\phi, \phi_t)(t, \cdot)\|_{\dot H^1\times L^2}\leq 2R, \forall t\in [0, T(R)], \\
    \|(\tilde\phi, \tilde\phi_t)(t, \cdot)\|_{\dot H^1\times L^2}\leq 2R, \forall t\in [0, T(R)]. 
\end{gather*}
Since
\begin{gather*}
    \lVert \phi[0]\lVert_{\dot{H}^1\times L^2}+ \lVert F(t, \phi_0(t, \cdot), \phi_1(t, \cdot))(x)\lVert_{L^2_{t, x}((0, T(R))\times \mathbb{S}^1)}\leq  \frac{M}{2}= R+ \sqrt{T_0(R)} C_B(2R),\\
       \lVert \tilde \phi[0]\lVert_{\dot{H}^1\times L^2}+  \lVert F(t, \tilde\phi_0(t, \cdot), \tilde\phi_1(t, \cdot))(x)\lVert_{L^2_{t, x}((0, T(R))\times \mathbb{S}^1)}\leq  \frac{M}{2}= R+ \sqrt{T_0(R)} C_B(2R),
\end{gather*}
thanks to the choice of $C_{cd}(R)$, there is 
\begin{align*}
  &\;\;\;\;  \|(\phi, \phi_t)- (\tilde \phi, \tilde \phi_t)\|_{C([0, T(R)]; H^1\times L^2(\mathbb{S}^1; \mathbb{R}^{k+1}))}, \\
    &\leq  C_{cd}(R) \left(\|(g_0, g_1)- (\tilde g_0, \tilde g_1)\|_{H^1\times L^2}+ \|F(t, \phi_0(t, \cdot), \phi_1(t, \cdot))(x)- F(t, \tilde \phi_0(t, \cdot), \tilde \phi_1(t, \cdot))(x)\|_{L^2_{t, x}((0, T(R))\times \mathbb{S}^1)} \right), \\
   & \leq C_{cd}(R) \left(\|(g_0, g_1)- (\tilde g_0, \tilde g_1)\|_{H^1\times L^2}+ K(2R)\|(\phi_0(t, \cdot), \phi_1(t, \cdot))- (\tilde \phi_0(t, \cdot), \tilde \phi_1(t, \cdot))\|_{L^2(0, T(R); H^1\times L^2(\mathbb{S}^1))} \right)\\
    & \leq C_{cd}(R) \left(\|(g_0, g_1)- (\tilde g_0, \tilde g_1)\|_{H^1\times L^2}+ \sqrt{T(R)}K(2R)\|(\phi_0, \phi_1)- (\tilde \phi_0, \tilde \phi_1)\|_{C([0, T(R)]; H^1\times L^2(\mathbb{S}^1))} \right)\\
     & \leq C_{cd}(R) \|(g_0, g_1)- (\tilde g_0, \tilde g_1)\|_{H^1\times L^2}+ \frac{1}{2}\|(\phi, \phi_t)- (\tilde \phi, \tilde \phi_t)\|_{C([0, T(R)]; H^1\times L^2(\mathbb{S}^1; \mathbb{R}^{k+1}))}. 
\end{align*}
Hence 
\begin{equation*}
    \|(\phi, \phi_t)- (\tilde \phi, \tilde \phi_t)\|_{C([0, T(R)]; H^1\times L^2(\mathbb{S}^1; \mathbb{R}^{k+1}))}\leq 2 C_{cd}(R) \|(g_0, g_1)- (\tilde g_0, \tilde g_1)\|_{H^1\times L^2}.
\end{equation*}

This completes the proof of Proposition \ref{lem:wellclosegene}.

\end{proof}

\section*{Acknowledgments}
Part of this work was done when Jean-Michel Coron was visiting Bernoulli Center at EPFL, and when Joachim Krieger was visiting Tsinghua University.   
We appreciate the hospitality and financial support of these institutions.   Shengquan Xiang is financially  supported by “The Fundamental Research Funds for the Central Universities, 7100604200, Peking University”

\bibliographystyle{plain}
\bibliography{main}

\begin{thebibliography}{10}

\bibitem{Bardos-Lebeau-Rauch}
Claude Bardos, Gilles Lebeau, and Jeffrey Rauch.
\newblock Sharp sufficient conditions for the observation, control, and
  stabilization of waves from the boundary.
\newblock {\em SIAM J. Control Optim.}, 30(5):1024--1065, 1992.

\bibitem{Coron-1990}
Jean-Michel Coron.
\newblock A necessary condition for feedback stabilization.
\newblock {\em Systems Control Lett.}, 14(3):227--232, 1990.

\bibitem{coron}
Jean-Michel Coron.
\newblock {\em Control and nonlinearity}, volume 136 of {\em Mathematical
  Surveys and Monographs}.
\newblock American Mathematical Society, Providence, RI, 2007.

\bibitem{Coron-Crepeau}
Jean-Michel Coron and Emmanuelle Cr\'{e}peau.
\newblock Exact boundary controllability of a nonlinear {K}d{V} equation with
  critical lengths.
\newblock {\em J. Eur. Math. Soc. (JEMS)}, 6(3):367--398, 2004.

\bibitem{2022-Coron-Xiang-Zhang-JDE}
Jean-Michel Coron, Shengquan Xiang, and Ping Zhang.
\newblock {On the global approximate controllability in small time of
  semiclassical 1-D Schr{\"o}dinger equations between two states with positive
  quantum densities}.
\newblock {\em J. Differential Equations}, 345:1--44, 2023.

\bibitem{Gagnon-Hayat-Xiang-Zhang}
Ludovick Gagnon, Amaury Hayat, Shengquan Xiang, and Christophe Zhang.
\newblock Fredholm transformation on {L}aplacian and rapid stabilization for
  the heat equation.
\newblock {\em J. Funct. Anal.}, 283(12):Paper No. 109664, 67, 2022.

\bibitem{1967-Hahn-book}
Wolfgang Hahn.
\newblock {\em Stability of motion}.
\newblock Die Grundlehren der mathematischen Wissenschaften, Band 138.
  Springer-Verlag New York, Inc., New York, 1967.
\newblock Translated from the German manuscript by Arne P. Baartz.

\bibitem{Keel-Tao-1998}
Markus Keel and Terence Tao.
\newblock Local and global well-posedness of wave maps on {$\bold R^{1+1}$} for
  rough data.
\newblock {\em Internat. Math. Res. Notices}, (21):1117--1156, 1998.

\bibitem{Krieger-Schlag-Tataru}
J.~Krieger, W.~Schlag, and D.~Tataru.
\newblock Renormalization and blow up for charge one equivariant critical wave
  maps.
\newblock {\em Invent. Math.}, 171(3):543--615, 2008.

\bibitem{krieger2020boundary}
Joachim Krieger and Shengquan Xiang.
\newblock Boundary stabilization of focusing nlkg near unstable equilibria:
  radial case.
\newblock {\em Preprint, arXiv: 2004.07616}, 2020.

\bibitem{Krieger-Xiang-kdv}
Joachim Krieger and Shengquan Xiang.
\newblock Cost for a controlled linear {K}d{V} equation.
\newblock {\em ESAIM Control Optim. Calc. Var.}, 27(suppl.):Paper No. S21, 41,
  2021.

\bibitem{Krieger-Xiang-2022}
Joachim Krieger and Shengquan Xiang.
\newblock Semi-global controllability of a geometric wave equation.
\newblock 2022.

\bibitem{MR1745475}
Irena Lasiecka and Roberto Triggiani.
\newblock {\em Control theory for partial differential equations: continuous
  and approximation theories. {I}}, volume~74 of {\em Encyclopedia of
  Mathematics and its Applications}.
\newblock Cambridge University Press, Cambridge, 2000.
\newblock Abstract parabolic systems.

\bibitem{Laurent-2011}
Camille Laurent.
\newblock On stabilization and control for the critical {K}lein-{G}ordon
  equation on a 3-{D} compact manifold.
\newblock {\em J. Funct. Anal.}, 260(5):1304--1368, 2011.

\bibitem{R-R-2012}
Pierre Rapha\"{e}l and Igor Rodnianski.
\newblock Stable blow up dynamics for the critical co-rotational wave maps and
  equivariant {Y}ang-{M}ills problems.
\newblock {\em Publ. Math. Inst. Hautes \'{E}tudes Sci.}, 115:1--122, 2012.

\bibitem{Tataru-2004}
Daniel Tataru.
\newblock The wave maps equation.
\newblock {\em Bull. Amer. Math. Soc. (N.S.)}, 41(2):185--204, 2004.

\bibitem{Xiang-heat-2020}
Shengquan Xiang.
\newblock Quantitative rapid and finite time stabilization of the heat
  equation.
\newblock {\em Preprint, arXiv:2010.04696}, 2020.

\bibitem{1993-Zuazua-AIHP}
Enrique Zuazua.
\newblock Exact controllability for semilinear wave equations in one space
  dimension.
\newblock {\em Ann. Inst. H. Poincar\'e Anal. Non Lin\'eaire}, 10(1):109--129,
  1993.

\end{thebibliography}

\end{document}